\newcommand{\ol}[1]{\overline{#1}}
\newcommand{\tx}[1]{\textrm{#1}}
\newcommand{\wh}[1]{\widehat{#1}}
\newcommand{\wt}[1]{\widetilde{#1}}
\newcommand\R{\mathbb{R}}
\newcommand\Z{\mathbb{Z}}
\newcommand{\dd}{d}
\newcommand{\ddt}{\partial_t}
\newcommand{\del}{\delta}
\newcommand{\dt}{\,\dd t}
\newcommand{\dx}{\,\dd x}
\newcommand{\dxi}{\,\dd\xi}
\newcommand{\dy}{\,\dd y}
\newcommand{\dz}{\,\dd z}
\newcommand{\eps}{\varepsilon}
\newcommand{\bnorm}[1]{\big\lVert#1\big\rVert}
\newcommand{\hs}{\mathfrak{I}_2}
\newcommand{\ls}{\textup{LS}}
\newcommand{\norm}[1]{\left\lVert#1\right\rVert}
\newcommand{\op}{\mathfrak{I}_\infty}
\newcommand{\snorm}[1]{\lVert#1\rVert}
\newcommand{\tc}{\mathfrak{I}_1}
\newcommand{\tr}{\operatorname{tr}}
\newcommand{\Err}{Err}
\newtheorem{theorem}{Theorem}[section]
\newtheorem{lemma}[theorem]{Lemma}
\newtheorem{corollary}[theorem]{Corollary}
\newtheorem{proposition}[theorem]{Proposition}
\theoremstyle{definition}
\newtheorem{remark}[theorem]{Remark}
\numberwithin{equation}{section}
\begin{document}

\title{A-priori estimates for generalized \\ Korteweg--de Vries equations in $H^{-1}(\R)$}

\author[M.~Ifrim]{Mihaela Ifrim}%${}^*$} 
%\footnotemark{Corresponding author}
\address{Department of Mathematics, University of Wisconsin--Madison, Madison, WI 53706, USA}
\email{ifrim@math.wisc.edu}
%\thanks{${}^*$Corresponding author}

\author[T.~Laurens]{Thierry Laurens}
\address{Department of Mathematics, University of Wisconsin--Madison, Madison, WI 53706, USA}
\email{laurens@math.wisc.edu}

\begin{abstract}
We prove local-in-time a-priori estimates in $H^{-1}(\mathbb{R})$ for a family of generalized Korteweg--de Vries equations.  This is the first estimate for any non-integrable perturbation of the KdV equation that matches the regularity of the sharp well-posedness theory for KdV.  In particular, we show that our analysis applies to models for long waves in a shallow channel of water with an uneven bottom.

The proof of our main result is based upon a bootstrap argument for the renormalized perturbation determinant coupled with a local smoothing norm.
\end{abstract}

\maketitle
\tableofcontents

\section{Introduction}

The Korteweg--de Vries equation
\begin{equation}
\ddt u = - u''' + 6uu'
\tag{KdV}\label{KdV}
\end{equation}
(where $u' = \partial_xu$) was originally derived as a model for the propagation of waves in a shallow channel of water~\cite{Korteweg1895}.  However, over the next century, \eqref{KdV} has proved to be a fundamental model for nonlinear dispersive waves, with applications to a wide variety of physical systems spanning many fields of science (see, for example,~\cite{Crighton1995}).

Mathematically, \eqref{KdV} has also played a central role in our understanding of dispersive equations.  In particular, the \eqref{KdV} equation was the first discovered example of a completely integrable PDE, a feature that has been heavily exploited in some works.  This discovery sparked a pursuit into the well-posedness and dispersive behavior for this system over the next 50 years, and generated a long list of cutting-edge techniques ~\cites{Bona1976,Kato1975,Saut1976,Temam1969,Tsutsumi1971,Kenig1991,Bourgain1993,Christ2003,Kenig1996,Colliander2003,Guo2009,Kishimoto2009, KT, MR4706572}, some of which rely on complete integrability, and some of which do not. On the integrable side this effort culminated in global well-posedness for initial data in $H^{-1}$ on the line and the circle~\cites{Killip2019,Kappeler2006}, a result that is sharp in the class of $H^s$ spaces for both geometries~\cites{Molinet2011,Molinet2012}.

Although \eqref{KdV} lied at the center of this effort, it also served as a testing ground for the introduction of new tools and innovative techniques.  After their introduction, many of these methods (both integrable and non-integrable) were later adapted to broader classes of dispersive equations.  However, this important step has yet to be achieved for the completely integrable methods used to prove the sharp well-posedness results~\cites{Killip2019,Kappeler2006}.

One physically important class of examples are the KdV equations with variable bottom.  As \eqref{KdV} has proved to be an effective model for the propagation of waves in a shallow channel of water, many authors have introduced generalizations to describe an uneven bottom~\cites{Dingemans1997,Grimshaw1999,Groesen1993,Johnson1973,Miles1979,Pudjaprasetya1996,Pudjaprasetya1999,Yoon1994,Israwi2010,Lannes2013,Tian2001}.  For concreteness, consider the model
\begin{equation}
\partial_t u = - b^5 u''' + 6 uu' - 4 b u' - 6 b' u,
\label{KdVvb}
\end{equation}
which was derived and rigorously justified in~\cite{Israwi2010}.  Here, $b(x)$ is defined in terms of a function $c(x)$ which describes the bottom of the channel:
\begin{equation*}
b(x) := \sqrt{1 - c(x)}.
\end{equation*} 
When $c(x)$ is a small Schwartz function, the equation \eqref{gKdV} closely resembles \eqref{KdV}.  Nevertheless, our understanding of the well-posedness problem is comparatively lacking.

After a change of variables (see \eqref{cov} below), the equation \eqref{KdVvb} can be put in the form 
\begin{equation}
\ddt u = - u''' + 6uu' + (a_1u')' + a_2u^2 + a_3u' + a_4u 
\tag{gKdV}\label{gKdV}
\end{equation}
for certain coefficients $a_j$.  In this work, we will assume that the coefficients $a_j(t,x)$ are given functions that are sufficiently regular and localized in space, and study the corresponding solutions $u:[-T,T]\times\R\to\R$.

Our main contribution is the following local-in-time a-priori estimate in $H^{-1}$:
\begin{theorem}[A-priori estimate]\label{t:energy}
There exists $\epsilon >0$ so that, if the coefficients are given smooth functions that satisfy the decay bounds uniformly for $|t|\leq T$ and $x\in\R$:
\begin{align}
|a_j(t,x)| + |\partial_xa_j(t,x)| &\leq \epsilon (1+x^2)^{-1} \quad\text{for }j=1,2,3 , 
\label{hyp 1}\\
|a_4(t,x)| + |\partial_xa_4(t,x)| &\lesssim (1+x^2)^{-1},
\label{hyp 2}
\end{align}
then for any $A>0$ there exist constants $C,T > 0$ so that for any smooth solution $u$ to \eqref{gKdV} in $[-T,T]$ whose initial data satisfies 
\begin{equation}\label{data}
\norm{u(0)}_{H^{-1}} \leq A,
\end{equation}
satisfies the uniform bound
\begin{equation}
 \sup_{|t|\leq T} \norm{u(t)}_{H^{-1}} \leq C .
\label{ap est}
\end{equation}
\end{theorem}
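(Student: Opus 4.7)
The plan is to adapt the approach developed for KdV in $H^{-1}(\R)$ in~\cite{Killip2019} by propagating a renormalized perturbation determinant along the \eqref{gKdV} flow and controlling its time derivative using a local smoothing quantity, leveraging the smallness and decay in \eqref{hyp 1}--\eqref{hyp 2}. Concretely, let $\alpha(\kappa;u)$ be the renormalized determinant built from the Lax operator $L_u := -\partial_x^2 + u$, interpreted as a form perturbation so that it is well-defined for $u \in H^{-1}$. For $\kappa$ sufficiently large depending on $\|u\|_{H^{-1}}$, one has the equivalence
\begin{equation*}
\alpha(\kappa;u) \sim \int_\R \frac{|\wh u(\xi)|^2}{\xi^2 + 4\kappa^2}\,\dxi ,
\end{equation*}
which is comparable to a $\kappa$-dependent multiple of $\|u\|_{H^{-1}}^2$, so uniform bounds on $\alpha$ translate directly into uniform bounds on $\|u\|_{H^{-1}}$.

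First, I would compute $\tfrac{\dd}{\dt}\alpha(\kappa;u(t))$ along \eqref{gKdV}. Because $\alpha$ is exactly conserved by the pure KdV flow, the derivative equals a sum of four terms $\mc{E}_j(a_j,u)$, $j=1,\dots,4$, arising from the perturbation $(a_1u')' + a_2 u^2 + a_3 u' + a_4 u$. Each $\mc{E}_j$ can be expressed as a trace involving the resolvent $(L_u + \kappa^2)^{-1}$, the coefficient $a_j$ (or one of its derivatives), and one or two factors of $u$.

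Second, I would estimate each $\mc{E}_j$ using a local smoothing quantity $N_\kappa(u)$, encoded through the same determinant machinery, which satisfies $N_\kappa(u) \lesssim \sup_{|t|\leq T}\alpha(\kappa;u(t)) + \text{(perturbation errors)}$ on any interval $[-T,T]$. The decay hypothesis \eqref{hyp 1} combined with the smallness parameter $\epsilon$ allows $\mc{E}_1,\mc{E}_2,\mc{E}_3$ to be absorbed by $\epsilon\, N_\kappa(u)$, while the non-small bound \eqref{hyp 2} is used to control $\mc{E}_4$ by $C(A)\,\alpha(\kappa;u(t))$. A bootstrap on $\sup_{|t|\leq T}\|u(t)\|_{H^{-1}} \le 2A$ then produces, after choosing $\kappa = \kappa(A)$ and $T=T(A)$ small and applying Gronwall to absorb the $\mc{E}_4$-contribution,
\begin{equation*}
\alpha(\kappa;u(t)) \leq 2\,\alpha(\kappa;u(0)) \quad \text{for } |t|\le T ,
\end{equation*}
which improves the assumption by a constant factor and closes the bootstrap, giving~\eqref{ap est}.

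The main obstacle is the quadratic term $\mc{E}_2$ coming from $a_2 u^2$: at regularity $H^{-1}$ the product $u^2$ has no direct meaning as a distribution, and its legitimate interpretation is only through the trace expansion of $\alpha(\kappa;u)$, where $a_2 u^2$ appears inside operator traces of the form $\tr\bigl(a_2 (L_u+\kappa^2)^{-1} u (L_u+\kappa^2)^{-1}\bigr)$ together with higher-order analogues. Showing that these traces are controlled by $\epsilon\, N_\kappa(u)$ with constants uniform for $\|u\|_{H^{-1}}\leq 2A$ requires the trace-ideal estimates of~\cite{Killip2019} to be adapted to a non-autonomous, non-conservative perturbation, and is by far the most delicate part of the argument. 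The remaining terms are simpler: $\mc{E}_1$ is handled by integration by parts against the smoothness and decay of $a_1$, while $\mc{E}_3$ and $\mc{E}_4$ are strictly subprincipal and reduce to standard weighted commutator estimates.
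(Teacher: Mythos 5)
Your proposal follows essentially the same route the paper takes: propagate the renormalized perturbation determinant $\alpha(\kappa,u)$ from~\cite{Killip2019} along the \eqref{gKdV} flow, interpret the error terms through the trace/resolvent expansion, control them with a local smoothing quantity, and close a bootstrap on $[-T,T]$ with $\kappa$ and $T$ chosen in terms of $A$. The architecture (energy estimate for $\alpha$ plus a coupled local smoothing bound plus continuity argument) is exactly what the paper implements in Propositions~\ref{t:ap est}, Theorem~\ref{t:local smoothing}, and Theorem~\ref{t:ap est full}.

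That said, your assessment of where the difficulties lie is partly off. You flag $a_2 u^2$ as the main obstruction, and it is indeed nontrivial for the reason you give. But the paper singles out $a_1(u')'$ as the term that genuinely changes the analysis: after integrating by parts to move the derivative onto the Green's function, one must bound $\phi g'$ in $L^2_t H^1_\kappa$ (the estimate \eqref{LSk g}), which requires the full local smoothing machinery and the operator bound \eqref{LSk H1k 2}. Calling this ``integration by parts against the smoothness and decay of $a_1$'' substantially understates what is needed. Likewise, $a_3 u'$ is not ``strictly subprincipal'': it costs a derivative and is estimated through the local smoothing norm, not a routine commutator bound.

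More importantly, your sketch does not engage with what the paper identifies as its central technical point. Because \eqref{gKdV} has no scaling symmetry, one cannot rescale the data to be small; instead one must take $\kappa$ large depending on $A$, and the crude bound $\|u\|_{H^{-1}_\kappa} \le \|u\|_{H^{-1}}$ wastes a large power of $\kappa$. The estimates of~\cites{Killip2019,Bringmann2021}, phrased in $H^{-1}$, are therefore insufficient even for the leading term on the left of \eqref{alpha micro intro 2}. The paper has to re-derive all the density, current, and $d\rho$ bounds directly in $H^{-1}_\kappa$, which requires a much more careful treatment of the high-frequency contributions (e.g. the interpolation bound \eqref{infty-better} and the $T^{1/4}$-balancing in \eqref{which-c}). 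Your proposal uses the correct framework, but without making this $\kappa$-adapted reworking explicit, the decisive estimates (in particular $\mc{E}_1$ and $\mc{E}_2$) cannot actually be closed at the stated regularity. Finally, your invocation of Gronwall for the $a_4$ term is a reasonable alternative, but the paper instead exploits the factor of $T$ produced by the time integral together with $T \le \kappa^{-2}$, so that the $a_4$ contribution is absorbed into the small prefactor of the bootstrap inequality \eqref{boot 2} rather than exponentiated.
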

\begin{remark}
The lifespan $T$ for the bounds in the Theorem~\ref{t:energy} is more accurately identified in the proof as 
\[
T\gtrsim (1+A)^{-4}.
\]
On the other hand one might expect that $C$ should be comparable to $A$; however this is not the case because in the proof of the Theorem~\ref{t:energy} we work with a weighted form of the $H^{-1}$ norm. So instead we get the weaker bound
\[
C  \lesssim A (1+A)^2.
\]

\end{remark}

\begin{remark}
The assumption that the coefficients $a_j$ are smooth in  the above theorem is purely qualitative, and only serves 
to ensure that we can talk about smooth solutions to~\eqref{gKdV}. Alternatively, if we want to only assume that 
the coefficients satisfy \eqref{hyp 1} and \eqref{hyp 2},
then the conclusion of the Theorem~\ref{t:energy} would remain valid for 
rougher solutions, e.g.\ in $L^2$,  once local well-posedness is known.

\end{remark}

We do not claim that the conditions \eqref{hyp 1}--\eqref{hyp 2} in  Theorem~\ref{t:energy} are sharp.  In fact, our proof will require slightly weaker hypotheses (see \eqref{a}--\eqref{d} below for details).  The main thrust of this work is to match the sharp regularity of the well-posedness theory for \eqref{KdV}, and so any hypotheses on the coefficients that work constitutes significant progress.  In particular, \eqref{ap est} rules out strong forms of instantaneous norm inflation in $H^{-1}$.

Applying our general result to the equation~\eqref{KdVvb}, we obtain:
\begin{corollary}\label{t:KdV vb}
There exists a constant $\epsilon > 0$ so that, if $c:\R\to\R$ is a smooth function that satisfies the small pointwise bounds
\begin{equation*}
|\partial_x^jc(x)| \leq \epsilon (1+x^2)^{-1} \quad\text{for }j=0,1,\dots,4 , 
\end{equation*}
then for any $A>0$ there exist constants $C,T > 0$ so that
for any smooth solution $u$ to \eqref{KdVvb} in $[-T,T]$ whose initial data satisfies
\begin{equation*}
\norm{u(0)}_{H^{-1}} \leq A ,
\end{equation*}
satisfies the uniform bound
\begin{equation*}
 \sup_{|t|\leq T} \norm{u(t)}_{H^{-1}} \leq C .
\end{equation*}
\end{corollary}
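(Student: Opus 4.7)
The plan is to deduce this corollary from Theorem~\ref{t:energy} by applying the explicit change of variables \eqref{cov} to \eqref{KdVvb}.

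First, I would implement the transformation: a spatial diffeomorphism $y = \phi(x)$, a pointwise rescaling $v(t,y) = \mu(x)\, u(t,x)$, and a time-dependent Galilean shift $z = y - \beta t$. Here $\phi$ and $\mu$ are chosen as fractional powers of $b(x) = \sqrt{1-c(x)}$ so that the leading dispersive coefficient $-b^5$ is normalized to $-1$ and the nonlinearity remains $6 v v_y$ (matching both forces the relation $\phi'(x) = \mu(x) = b(x)^{-5/3}$), while $\beta = 4$ cancels the constant-velocity drift $-4 u'$ that persists in \eqref{KdVvb} when $c \equiv 0$. Since $c$ is pointwise small, $b$ stays close to $1$, the fractional powers are smooth, $\mu$ and $\mu^{-1}$ are uniformly bounded, and $\phi$ is a bi-Lipschitz diffeomorphism of $\R$ close to the identity. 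Expanding all derivatives shows that the resulting equation for $v$ has the form \eqref{gKdV} with coefficients $a_j$ that are explicit polynomial expressions in $b^{\pm 1/3}$ and $b', b'', b''', b^{(4)}$, each vanishing identically when $c \equiv 0$ (the cancellation of the constant piece of $a_3$ relies on the Galilean shift).

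Next, I would verify hypotheses \eqref{hyp 1}--\eqref{hyp 2}. Writing $b - 1 = O(c)$ and applying the chain rule, each $a_j$ and its first $x$-derivative is pointwise dominated by a linear combination of $|c|, |c'|, \ldots, |c^{(4)}|$ times bounded smooth functions of $c$. The decay assumption $|\partial_x^j c(x)| \leq \epsilon (1+x^2)^{-1}$ for $j = 0, \dots, 4$ therefore yields $|a_j(x)| + |\partial_x a_j(x)| \lesssim \epsilon (1+x^2)^{-1}$. Since $\phi$ is $O(\epsilon)$-close to the identity one has $1 + y^2 \simeq 1 + x^2$; and for $|t| \leq T$ the translation $z = y - \beta t$ distorts $(1+z^2)^{-1}$ by at most a constant depending on $T$. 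Shrinking $\epsilon$ accordingly, the transformed coefficients satisfy the hypotheses of Theorem~\ref{t:energy}.

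Finally, I would transfer the resulting $H^{-1}$-estimate for $v$ back to $u$. The map $u \mapsto v$ is the composition of multiplication by the smooth, strictly positive, uniformly bounded function $\mu$; composition with the bi-Lipschitz diffeomorphism $\phi$; and a translation. Each of these is a bounded isomorphism of $H^1(\R)$ with bounded inverse, hence by duality a bounded isomorphism of $H^{-1}(\R)$ with comparable norms, so $\norm{u(t,\cdot)}_{H^{-1}} \simeq \norm{v(t,\cdot)}_{H^{-1}}$ uniformly in $t$. Applying Theorem~\ref{t:energy} to $v$ then produces the claimed bound for $u$. The main obstacle is the explicit calculation of the $a_j$ after the change of variables and the subsequent pointwise bookkeeping; a secondary subtlety is that the Galilean shift, while essential to cancel the persistent $-4 u'$ drift, introduces a time-dependence in the coefficients, so one must track carefully how this interacts with the spatial decay demanded by \eqref{hyp 1}--\eqref{hyp 2} in order to keep the lifespan $T$ and smallness $\epsilon$ consistent.
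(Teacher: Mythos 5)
Your proposal is correct and takes essentially the same route as the paper: the identical change of variables $u(t,x) = b^{5/3}(x)\, v(t, y(x)-4t)$ with $y'(x) = b^{-5/3}(x)$, verification that the resulting coefficients $a_j$ (with $a_1 \equiv 0$) satisfy \eqref{hyp 1}--\eqref{hyp 2}, application of the a-priori estimate to $v$, and transfer back to $u$ via the fact that composition with the bi-Lipschitz diffeomorphism $y$ and multiplication by $b^{\pm 5/3}$ are isomorphisms of $H^{-1}$ (the paper's \eqref{y 1}--\eqref{y 4}). The paper suppresses the time-dependence introduced by the shift $y - 4t$ in its notation for the $a_j$, so your explicit observation that this is harmless for $|t| \le T$ is a welcome clarification rather than a departure.
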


We contend that for each of the numerous applications of \eqref{KdV}, our a-priori estimate \eqref{ap est} could also be employed to describe small physical imperfections using similar methods.

Another main way in which \eqref{gKdV} arises naturally is in the study of localized perturbations of a background solution to \eqref{KdV}: if we take our solution $u = q+V$ to be a given background wave $V$ plus a perturbation $q$, then the equation for $q$ often takes the form \eqref{gKdV}.  For example, if $V(t,x)$ solves \eqref{KdV} then $q$ solves
\begin{equation}
\ddt q = - q''' + 6qq' + 6Vq' + 6V'q ,
\label{KdVwp}
\end{equation}
and if $V(x)$ is a fixed background profile then a forcing term $-V'''+6VV'$ is added to the RHS above.  Through this lens, the special case \eqref{KdVwp} of gKdV equations have been of great interest in the literature.

The first phase of results for \eqref{KdVwp} addressed the construction of solutions using the inverse scattering transform.  Two particularly common choices for $V$ are profiles that are periodic~\cites{Kuznetsov1974,Ermakova1982,Ermakova1982a,Firsova1988}, which describe localized defects to periodic wave trains, and step-like~\cites{Buslaev1962,Cohen1984,Cohen1987,Kappeler1986}, which arise in the study of bore propagation and rarefaction waves.  Later, the authors of~\cites{Egorova2009,Egorova2009a} established a general framework that encompasses both of these cases, and even a mixture of the two as $x\to+\infty$ and $x\to-\infty$.  Although the main thrust is to prove existence, a key step in many of these works is establishing the persistence of regularity for solutions, much like \eqref{ap est}.

While many of these results work at high regularity, these methods for existence have been adapted to classes of one-sided step-like initial data~\cites{Grudsky2014,Rybkin2011,Rybkin2018} and even to one-sided step-like elements of $H^{-1}_{\text{loc}}(\R)$~\cite{Grudsky2015}. Despite the lack of assumptions as $x\to -\infty$ (the direction in which radiation propagates), these low-regularity results require rapid decay at $x\to +\infty$ and global boundedness from below.

An alternative approach was introduced in~\cite{Menikoff1972}, which proves global existence and uniqueness for initial data that satisfies $u(0,x) = o(|x|)$ as $x\to\pm\infty$.  Here, the background wave $V$ is evolved according to the inviscid Burger's equation $\ddt V = -6VV'$ using the method of characteristics, and then solutions to the equation for $q$ are constructed using a family of discretized approximate equations.  Existence and uniqueness for $q$ is then established in a weighted $H^3$ space.

Inspired by the theory of tidal bores and rarefaction waves (as well as kink solutions to KdV with higher-power nonlinearities), authors began turning their attention to the well-posedness problem for step-like initial data.  The first phase of results employed BBM and parabolic  regularizations~\cites{Bona1994,Iorio1998,Zhidkov2001},
which involve adding a term to the LHS of \eqref{KdVwp} that eases the proof of well-posedness ($-\eps\partial_t\partial^2_xq$ and $-\eps\partial^2_xq$ for $\eps>0$, respectively) and then sending $\eps\to 0$.  The former was introduced by Bona and Smith \cite{Bona1975} in the case $V\equiv 0$ and leads to the Benjamin--Bona--Mahony equation, for which the method is named.
These approaches culminated in local well-posedness for initial data $q\in H^s$ for $s>\frac{3}{2}$, and was later advanced to $s>1$ in~\cite{Gallo2005} through the incorporation of Strichartz estimates.

Recently, the work~\cite{Palacios2023} extended local well-posedness to the range $s > \frac{1}{2}$ using a synthesis of much more modern tools for well-posedness.  In addition to the cases of step-like and periodic background waves $V$, this result also applies to more general choices of $V$ with bounded asymptotics as $x\to\pm\infty$, and even more general nonlinearities in \eqref{KdV}.  Nevertheless, it only applies to \eqref{gKdV} in the special case where $a_1\equiv 0$ and $a_2$ is constant.

Around the same time, the second author proved that \eqref{KdV} is globally well-posed for initial perturbations $q\in H^{-1}$ in~\cite{Laurens2023}, provided that the background wave $V$ is a suitable solution to \eqref{KdV}.  These conditions on $V$ do include regularity but do not impose any assumptions on spatial asymptotics.  In particular, this result applies to the important cases of smooth periodic~\cite{Laurens2023} and step-like~\cite{Laurens2022} initial data.

All of these works for \eqref{KdVwp} focus on the coefficients $a_3$ and $a_4$ in \eqref{gKdV}.  The introduction of the coefficient $a_1$ already significantly changes the analysis.  The local well-posedness of this equation for initial data in $H^s$ with $s>\frac{3}{2}$ and suitable coefficients was demonstrated in~\cite{Israwi2013} using energy methods.  Recently, local well-posedness was extended to the range $s>\frac{1}{2}$ in~\cite{Molinet2023}.

In the most general case, the optimal well-posedness results that cover the $a_2$ term in \eqref{gKdV} are~\cites{Craig1992,Akhunov2019, Ben} to the best of our knowledge.  These works establish well-posedness for initial data at a considerably high regularity, but they apply to very general families of nonlinear equations with a KdV-type dispersion.

Let us now turn to our methods for proving Theorem~\ref{t:energy}.  At the center of our analysis lies the renormalized perturbation determinant $\alpha(\kappa,u)$ (see \eqref{alpha} below for details).  The perturbation determinant is a conserved quantity for \eqref{KdV} originating from scattering theory, and is rooted in the complete integrability of this system.  However, the authors of~\cite{Killip2018} (and independently~\cite{Rybkin2010}) discovered a renormalization $\alpha$ of this quantity that satisfies
\begin{equation}
\kappa \alpha(\kappa,u) \approx \int \frac{|\wh{u}(\xi)|^2}{\xi^2 + 4\kappa^2} \dxi =: \norm{u}_{H^{-1}_\kappa}^2
\quad\text{for all }\kappa \gg \norm{u}_{H^{-1}}^2
\label{Hsk intro}
\end{equation}
and is still conserved, and used this to prove a global-in-time a-priori estimate for solutions to \eqref{KdV} in $H^s$ spaces for $-1 \leq s < 1$.  At the same time, conserved energies in $H^s$ spaces  were independently constructed in \cite{KT} for a full range  of Sobolev exponents $s \geq -1$. As it turned out,  these energies are also connected to the perturbation determinant, but with a different choice of renormalization.  See also the earlier work~\cite{B} where similar bounds were proved at slightly higher regularity $s > -\frac45$.  

The same quantity $\alpha$ was then used as a starting point in~\cite{Killip2019} to prove that \eqref{KdV} is well-posed in $H^{-1}$ using the method of commuting flows.

Of course, introducing the coefficients $a_j$ in \eqref{gKdV} breaks the conservation of $\alpha$, along with all of the other conservation laws of \eqref{KdV}.  In general, the value of $\alpha$ can grow in time, and consequently we cannot expect a global-in-time estimate.  Instead, in order to establish \eqref{ap est}, we prove that $\alpha$ must remain finite for a short period of time using a bootstrap/continuity argument.

However, we immediately encounter an obstacle.  As an illustrative example, consider the $L^2$ norm; if $u$ is a Schwartz solution of \eqref{KdV}, then the $L^2$ norm of $u(t)$ is constant in time.  On the other hand, if $u$ solves \eqref{gKdV}, then the $L^2$ norm evolves according to
\begin{equation}
\partial_t \int \tfrac{1}{2} u^2\, dx = \int -a_1 (\partial_xu)^2 + a_2 u^3 - \tfrac{1}{2}(\partial_x a_3)u^2 + a_4 u^2\, dx .
\label{L2}
\end{equation}
How can we bound the right-hand side solely in terms of $\norm{u}_{L^2}$ in order to close the argument?

Our strategy in this paper is to use local smoothing: due to the dispersive nature of the equation, we expect a gain in the regularity of solutions locally in space on average in time.  For \eqref{KdV}, this effect was discovered by Kato~\cite{Kato1983}, who proved that $L^2$ solutions are in fact in $H^1$ locally in space at almost every time.  In particular, this gives us a way to make sense of the  RHS of \eqref{L2} even for $L^2$ solutions $u$.

For the a-priori estimate \eqref{ap est}, we encounter an analogous loss of derivatives phenomenon in computing the time evolution of $\alpha$.  The local smoothing effect attendant to this problem is to show that an $H^{-1}$ solution of \eqref{gKdV} is in $L^2$ locally in space and time:
\begin{theorem}[Local smoothing estimate]\label{t:le}
There exists $\epsilon >0$ so that, if the smooth coefficients $a_j$ satisfy the bounds in \eqref{hyp 1}--\eqref{hyp 2} uniformly for $|t|\leq T$ and $x\in\R$, then for any $A>0$ there exist constants $C,T>0$ so that for any smooth solution $u$ to \eqref{gKdV} in $[-T,T]$ whose initial data satisfies
\begin{equation}
\norm{u(0)}_{H^{-1}} \leq A,
\end{equation}
satisfies the local energy bound
\begin{equation}
\sup_{x_0\in\R} \int_{-T}^T \int_{x_0-1}^{x_0+1} |u(t,x)|^2\,dx\,dt \leq C .
\label{ls est}
\end{equation}
\end{theorem}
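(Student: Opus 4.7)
The plan is to prove Theorem~\ref{t:le} in parallel with Theorem~\ref{t:energy} via a joint bootstrap, by means of a Kato-type weighted energy estimate at the level of the resolvent-smoothed variable. Fix a frequency parameter $\kappa \gtrsim 1+A^2$ chosen large enough that the equivalence \eqref{Hsk intro} applies, and set $g := (-\partial_x^2 + 4\kappa^2)^{-1}u$, so that $u = -g'' + 4\kappa^2 g$. For each $x_0\in\R$ pick a smooth bounded nondecreasing weight $\phi_{x_0}$ whose derivative $\phi_{x_0}'$ is a nonnegative unit-scale bump centered at $x_0$, with $\phi_{x_0}'(x) \gtrsim \mathbf{1}_{[x_0-1,x_0+1]}(x)$, and define the localized functional
\[
E_{x_0}(t) := \int_{\R} \phi_{x_0}(x)\,u(t,x)\,g(t,x)\, dx .
\]
Under the bootstrap $\norm{u(t)}_{H^{-1}} \leq 2A$, this quantity is bounded uniformly in $x_0$ and $t \in [-T,T]$ by $\lesssim A^2$.

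Differentiating $E_{x_0}$ using \eqref{gKdV} and observing that the resolvent $R_\kappa := (-\partial_x^2+4\kappa^2)^{-1}$ commutes with $\partial_x$, the Airy flow $\ddt u = -u'''$ lifts to $\ddt g = -g'''$, and repeated integration by parts exploiting $u = -g'' + 4\kappa^2 g$ yields the Kato-type identity
\[
\ddt E_{x_0}\big|_{\text{Airy}} = -3\int \phi_{x_0}'(g'')^2\, dx - 12\kappa^2 \int \phi_{x_0}'(g')^2\, dx + \mc{R}_{x_0}(t),
\]
where $\mc{R}_{x_0}(t)$ involves only $\phi_{x_0}'''$ and is controlled globally by $\norm{u}_{H^{-1}_\kappa}^2$. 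This coercive bulk then converts into local $L^2$-control of $u$ via the algebraic identity
\[
\int \phi_{x_0}'\,u^2\, dx = \int \phi_{x_0}'\,[(g'')^2 + 8\kappa^2(g')^2 + 16\kappa^4 g^2]\, dx - 4\kappa^2 \int \phi_{x_0}'''\, g^2\, dx,
\]
where the first two terms are controlled by the energy bulk, and the residual $\kappa^4 \int \phi_{x_0}' g^2$ is handled by the pointwise bound $\norm{g}_{L^\infty}^2 \lesssim \kappa^{-1} \norm{u}_{H^{-1}_\kappa}^2 \lesssim A^2/\kappa$ integrated over $T\lesssim (1+A)^{-4}$.

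The nonlinear contribution from $6uu'$ reduces after integration by parts to expressions of the form $\int \phi_{x_0}'\,g\,u^2$ and $\int \phi_{x_0}\,g'\,u^2$, both of which can be absorbed into the coercive bulk using the pointwise bound $\norm{g}_{L^\infty} \lesssim A/\sqrt{\kappa}$ together with the identity for $\int\phi_{x_0}' u^2$ above; the prefactor $A/\sqrt{\kappa}$ is small for $\kappa \gtrsim 1+A^2$, which is precisely what allows the absorption to close. The coefficient contributions inherit the decay $(1+x^2)^{-1}$ from \eqref{hyp 1}--\eqref{hyp 2}, which is integrable in $x_0$, and the smallness $\eps$ suffices to absorb the $a_1, a_2, a_3$ terms into the coercive bulk, while $a_4$ yields a harmless Gr\"onwall-type term on the short lifespan. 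Integrating in time over $[-T,T]$ and taking $\sup_{x_0\in\R}$ produces the stated bound \eqref{ls est}.

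The main obstacle will be the cubic nonlinearity at $H^{-1}$ regularity, where no direct multilinear gain on $u$ is available. The resolution is to work throughout at the smoothed level $g$, recovering $u$ only at the end via $u=-g''+4\kappa^2 g$, and to carefully balance the two opposing scalings: $\kappa$ must be taken large so that $\norm{g}_{L^\infty}$ is small enough to absorb the cubic, while the recovery of $u$ from $g$ pays powers of $\kappa$ that must be controlled by the short lifespan $T\sim (1+A)^{-4}$.
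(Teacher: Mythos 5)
Your proposal replaces the paper's central object---the diagonal Green's function $g(x;\kappa,u) = \langle\delta_x,(-\partial^2_x+u+\kappa^2)^{-1}\delta_x\rangle$ of the \emph{full} Lax operator---by the free resolvent applied to $u$, namely $g := R_0(2\kappa)u$, and tracks the localized quadratic functional $E_{x_0} = \int\phi_{x_0}\,u\,R_0 u\,dx$. This is essentially a localized version of $\|u\|^2_{H^{-1}_\kappa}$, i.e.\ the quadratic truncation of the paper's $\alpha$. The higher-order terms in $\alpha$ are not decorative: they are precisely what makes the density--flux relation \eqref{alpha micro} hold, with a current $j$ whose cubic-and-higher part has the explicit multilinear resolvent structure (powers of $R_0u$ conjugated by $\sqrt{R_0}$, together with $1/g$) that the operator and Hilbert--Schmidt bounds in Section~\ref{s:lsk} are designed to exploit. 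By truncating, you discard the cancellation and have to control the cubic error terms by brute force, and this is where the argument breaks.

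Concretely, carrying out your computation for the KdV nonlinearity gives
\[
\partial_t E_{x_0}\big|_{\text{nonlin}}
= -3\int\phi'_{x_0}\, g\, u^2\,dx \;-\; 3\int\phi_{x_0}\, g'\, u^2\,dx
\;-\; 3\int\phi'_{x_0}\, u\, R_0 u^2\,dx \;-\; 3\int\phi_{x_0}\, u'\, R_0 u^2\,dx .
\]
Only the first term is handled by your mechanism: there $\phi'_{x_0}$ gives the localization and $\|g\|_{L^\infty}\lesssim A/\sqrt{\kappa}$ gives the smallness. The second term has no localization (the weight $\phi_{x_0}$ is merely bounded) and requires control of $g'=R_0u'$, but $\|g'\|_{L^\infty}$ is \emph{not} small: one computes $\|R_0(2\kappa)u'\|_{L^\infty}\lesssim\sqrt{\kappa}\,\|u\|_{H^{-1}_\kappa}$, which grows with $\kappa$. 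Further integrating by parts just cycles back to the same quantity. The third and fourth terms (which your write-up omits entirely) are of the same difficulty: $R_0u^2$ is a cubic object whose local average is exactly the quantity you are trying to bound, so pairing it against $u$ or $u'$ without the microscopic conservation law is circular. There is no $\kappa$-smallness in these terms, and there is no localization in the $\phi_{x_0}$-weighted ones, so the absorption step cannot close as described.

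In the paper, these same cubic and higher contributions appear as $\psi^2_{x_0}(j-j_2)$ in \eqref{lem 4p3} and as the $\ell+m\geq1$ parts of $d\rho|_u$ in \eqref{dg LSk}, where they are estimated by putting one factor of $\sqrt{R_0}\psi u'\sqrt{R_0}$ in $L^2_t$-operator norm (via \eqref{LSk H1k 2}, gaining $\kappa^{-1/2}\|u\|_{\ls_\kappa}$) and the remaining factors in Hilbert--Schmidt norm; this requires working at the level of the full resolvent series so that each cubic piece has the right operator form. Your proposal discards exactly this structure. If you want an elementary route, you should either (i) keep the full diagonal Green's function of $-\partial^2_x+u+\kappa^2$, expand $g$ and $1/g$ in the series \eqref{g}, and derive the density--flux identity \eqref{alpha micro 2} before localizing, or (ii) modify $E_{x_0}$ by cubic-and-higher correction terms chosen to cancel the bad non-localized pieces---which is effectively reconstructing $\int\phi_{x_0}\rho$. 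As written, the proposal has a genuine gap at the cubic order.
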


We will prove \eqref{ap est} and \eqref{ls est} simultaneously, using a bootstrap argument in terms of both $\alpha$ and a local smoothing norm.

In the case where the coefficients $a_j\equiv 0$ vanish, an analogous local smoothing estimate was first proved in~\cite{Buckmaster2015}; see also the earlier work in \cite{B}, where similar local smoothing estimates are proved for  $H^s$ solutions with $s > -\frac45$.  However, our proof is more closely related to the alternative approach in~\cite{Killip2019}*{Th.~1.2} using the renormalized perturbation determinant.  This argument is made possible by the discovery of a density $\rho(x;\kappa,u)$ for $\alpha$ (see \eqref{alpha} below) not known during the earlier work~\cite{Killip2018}.  Moreover, if $u$ solves \eqref{KdV}, then $\rho$ satisfies the density-flux equation (or `microscopic conservation law')
\begin{equation}
\partial_t\rho + \partial_xj = 0
\label{alpha micro intro}
\end{equation}
for a certain current $j(x;\kappa,u)$ (see \eqref{j}).  Integrating this equation in space yields the conservation of $\alpha$.  Alternatively, first multiplying by a smooth step function and then integrating in space leads to a local smoothing estimate; this is basis of the short proof presented in~\cite{Killip2019}.

Turning our attention back to \eqref{gKdV}, we again encounter obstacles.  The presence of the coefficients $a_j$ breaks the conservation law \eqref{alpha micro intro}, and instead we have
\begin{equation}
\partial_t\rho + \partial_xj = d\rho|_u \big[ (a_1u')' + a_2u^2 + a_3u' + a_4u \big] .
\label{alpha micro intro 2}
\end{equation}
(Here, $d\rho|_u$ denotes the functional derivative of $\rho$ at $u$; see \eqref{drho} for details.)  All of terms on the RHS of \eqref{alpha micro intro 2} are new, and must be controlled in terms of $\alpha$ and our local smoothing norm.

Many of our estimates are in the spirit of~\cite{Bringmann2021}, which proves that the fifth-order KdV equation is globally well-posed in $H^{-1}$.  In order to prove their result, the authors had to prove an analogous local smoothing estimate for $H^{-1}$ solutions of their system and a family of approximate equations.  Together, the estimates in~\cites{Killip2019,Bringmann2021} provide upper bounds on the density $\rho$ and current $j$ in terms of the local smoothing and $H^{-1}$ norms, the latter of which can then be controlled by $\alpha$ using \eqref{Hsk intro}.  For example, when $\kappa\geq 1$ we may bound
\begin{equation}
\norm{u}_{H^{-1}_\kappa}^2 = \int \frac{|\wh{u}(\xi)|^2}{\xi^2 + 4\kappa^2} \dxi \leq \norm{u}_{H^{-1}}^2
\label{Hsk intro 2}
\end{equation}
and use this to construct $\alpha$ for all $\norm{u}^2_{H^{-1}} \ll 1$.  For the \eqref{KdV} and fifth-order KdV equations, this small-data assumption does not pose a problem; one can use the scaling symmetry to make the initial data arbitrarily small, and then the exact conservation of $\alpha$ implies that solutions remain small globally in time.

By comparison, \eqref{gKdV} does not posses a scaling symmetry in general, and we expect that the $H^{-1}$ norm of solutions can be growing in time without bound.  This in turn forces us to choose $\kappa \gg \norm{u}^2_{H^{-1}}$ large, which introduces a large implicit constant in \eqref{Hsk intro 2}.  Consequently, the estimates in~\cites{Killip2019,Bringmann2021} are insufficient, even in estimating the LHS of~\eqref{alpha micro intro 2}.  In order to close our bootstrap argument, we must establish new estimates for the density $\rho$ and current $j$ using the $H^{-1}_\kappa$ norm, rather than $H^{-1}$, as this is the quantity whose growth we can efficiently estimate using~\eqref{Hsk intro}. In comparison to the estimate \eqref{Hsk intro 2} used throughout~\cites{Killip2019,Bringmann2021}, this requires a much more careful estimation of high frequency contributions.

This paper is organized as follows.  We begin in Section~\ref{s:prelim} by reviewing the material developed in \cites{Killip2019,Bringmann2021} that we will need, including the key players $\alpha$, $\rho$, and $j$ mentioned previously as well as the estimates they satisfy.  We then proceed in Section~\ref{s:lsk} by introducing our local smoothing norm and developing the new estimates required for our bootstrap argument.

We employ these ingredients in Sections~\ref{s:ls est} and \ref{s:ap est} to prove more precise versions of our local smoothing and a-priori estimates, Theorem~\ref{t:local smoothing} and Proposition~\ref{t:ap est}, respectively.  We then combine these two estimates with a bootstrap argument, and present the full result in Theorem~\ref{t:ap est full}.  Finally, we conclude Section~\ref{s:ap est} by describing how the statements of Theorems~\ref{t:energy} and \ref{t:le} and Corollary~\ref{t:KdV vb} represent special cases of Theorem~\ref{t:ap est full}.

\subsection{Acknowledgments}
The authors are grateful to the referees for their careful reading and valuable suggestions, which greatly improved the exposition of the paper.

While working on this project, the first author was supported by NSF Grant DMS-2348908, the Miller Foundation, a Simons Fellowship, and a Vilas Associate Fellowship. The second author was partially supported by a Vilas Associate Fellowship.

\section{Preliminaries}
\label{s:prelim}

Here we recall some of the key objects from~\cite{Killip2019} which were used to prove that~\eqref{KdV} is well-posed in $H^{-1}$, and which will serve us well in obtaining the a-priori bounds  for  our nonintegrable model \eqref{gKdV} displayed in Theorem~\ref{t:energy} and Theorem~\ref{t:le}.  Our starting point is the self-adjoint Lax operator associated to KdV, 
\[
L_u := -\partial_x^2 + u.
\]
Formally, the spectrum of $L_u$ is conserved by the KdV flow, because $L_{u(t)}$ and $L_{u(s)}$ are unitarily equivalent operators. In general $L_u$
is not invertible, which is one reason it is convenient to replace it with $L_u + \kappa^2$; given $u \in H^{-1}$ the operator $L_u+\kappa^2$ is invertible if $\kappa>0$ is large enough, as we will discuss more thoroughly below.  
One may construct conserved quantities for KdV by looking 
at the trace of functions of $L_u+\kappa^2$, and in particular the trace of its renormalized logarithm,
for any $\kappa$ sufficiently large.  As shown in \cite{Killip2019}, this trace is closely related to the diagonal of the kernel of $L_u +\kappa^2$; the first step in the analysis is to construct the diagonal of the integral kernel $G(x,y)$ for the resolvent 
\begin{equation}
\label{def:resolvant}
R_u(\kappa):=(-\partial^2_x + u + \kappa^2)^{-1}
\end{equation}
of the Lax operator associated to a state $u$. With this operator in mind, it will be convenient to work with the norms
\begin{equation*}
\norm{f}_{H^s_\kappa}^2 := \int |\wh{f}(\xi)|^2 (\xi^2+4\kappa^2)^s\dxi ,
\end{equation*}
where our convention for the Fourier transform is
\begin{equation*}
\wh{f}(\xi) = \frac{1}{\sqrt{2\pi}} \int e^{-i\xi x} f(x)\dx \quad\text{so that}\quad \langle f,g\rangle = \int \ol{f}(x)g(x)\dx = \langle \wh{f},\wh{g} \rangle .
\end{equation*}

In the definition of the $H^s_\kappa$-norms, the constant $4$ is simply included in order to make \eqref{HS} an equality.  One should see these spaces as versions of the traditional inhomogeneous Sobolev spaces $H^s$ but adapted to the frequency scale $\kappa$ rather than the unit frequency. Topologically these are equivalent to the classical Sobolev spaces $H^s$, but with the implicit constants in the norm equivalence depending on $\kappa$. In particular,  the following elementary estimates hold
\begin{equation*}
\norm{fg}_{H^{\pm 1}_\kappa} \lesssim \norm{f}_{H^1} \norm{g}_{H^{\pm 1}_\kappa}
\quad\text{and}\quad
\norm{fg}_{H^{\pm 1}_\kappa} \lesssim \norm{f}_{W^{1,\infty}} \norm{g}_{H^{\pm 1}_\kappa}
\end{equation*}
uniformly for $\kappa\geq 1$.  Notice that the results for $H^{-1}_\kappa$ follow from those for $H^1_\kappa$ by duality.  We will also frequently use the embedding
\begin{equation*}
\norm{f}_{L^\infty} \lesssim \kappa^{-\frac12} \norm{f}_{H^1_\kappa} \quad\text{uniformly for }\kappa\geq 1 ,
\end{equation*}
which implies the algebra bound
\begin{equation*}
\norm{fg}_{H^1_\kappa} \lesssim \kappa^{-\frac12} \norm{f}_{H^1_\kappa}\norm{g}_{H^1_\kappa} \quad\text{uniformly for }\kappa\geq 1 .
\end{equation*}

The resolvent associated to $u\equiv 0$,
\begin{equation}
R_0(\kappa) := (-\partial^2_x + \kappa^2)^{-1},
\label{R0}
\end{equation}
plays an important role in what follows, and in particular has the mapping property
\[
R_0: H^s_{\kappa} \rightarrow H^{s+2}_{\kappa}, \quad s\in \mathbb{R}.
\]
This property is also shared by $R_u$ for $u\in H^{-1}$ for $\kappa$ large enough, but only for restricted range of $s$. Indeed, one can formally use $R_0$ to obtain an expansion for $R_u$:
\begin{equation}
\label{Ru-exp}
R_u=\sum_{\ell=0}^{\infty} (-1)^\ell (R_0u)^\ell R_0,
\end{equation}
which is justified for $u\in H^{-1}$ for $\kappa$ large enough.  Given $u\in H^{-1}$, this series converges to a bounded operator from $H^{-1}_\kappa$ into $H^1_\kappa$ for all $\kappa$ sufficiently large by the following lemma.

\begin{lemma}[\cite{Killip2018}]
For $\kappa>0$, we have
\begin{equation}
\bnorm{ \sqrt{R_0} f \sqrt{R_0} }_{\hs} = \kappa^{-\frac{1}{2}} \norm{ f}_{H^{-1}_\kappa} .
\label{HS}
\end{equation}
\end{lemma}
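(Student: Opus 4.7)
The plan is to square both sides, use cyclicity of the trace to reduce to a concrete scalar quantity, and then evaluate the resulting Fourier-side convolution. Setting $T := \sqrt{R_0}\, f\, \sqrt{R_0}$, I first compute
\begin{equation*}
\bnorm{T}_{\hs}^2 = \tr(T^*T) = \tr\bigl(\sqrt{R_0}\,\bar f\, R_0\, f\, \sqrt{R_0}\bigr) = \tr(\bar f\, R_0\, f\, R_0),
\end{equation*}
where the last step uses cyclicity of the trace to eliminate the square roots and leave a concrete trace to evaluate.

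To evaluate this trace, I pass to the Fourier side, where $R_0$ becomes the multiplier $(\xi^2+\kappa^2)^{-1}$ and multiplication by $f$ becomes convolution with $\hat f/\sqrt{2\pi}$. Reading off the integral kernel of $\bar f R_0 f R_0$ on the Fourier side and integrating along the diagonal (using $\widehat{\bar f}(\zeta) = \overline{\hat f(-\zeta)}$ to turn the double $\hat f$'s into $|\hat f|^2$) yields
\begin{equation*}
\tr(\bar f R_0 f R_0) = \frac{1}{2\pi}\iint \frac{|\hat f(\xi-\eta)|^2}{(\xi^2+\kappa^2)(\eta^2+\kappa^2)} \dxi \deta.
\end{equation*}

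To finish, I change variables $\zeta := \xi - \eta$, pull $|\hat f(\zeta)|^2$ outside, and compute the remaining $\eta$-integral
\begin{equation*}
\int \frac{1}{((\eta+\zeta)^2 + \kappa^2)(\eta^2 + \kappa^2)} \deta = \frac{2\pi}{\kappa(\zeta^2 + 4\kappa^2)}
\end{equation*}
by closing the contour in the upper half plane and summing residues at the simple poles $\eta = i\kappa$ and $\eta = -\zeta + i\kappa$. Substituting back gives $\bnorm{T}_{\hs}^2 = \kappa^{-1}\snorm{f}_{H^{-1}_\kappa}^2$, which is the claim. The only substantive step is the residue calculation (equivalently, a self-convolution of $(\xi^2+\kappa^2)^{-1}$); this is what produces the denominator $\zeta^2 + 4\kappa^2$ and thereby accounts for the factor of $4$ built into the definition of $H^s_\kappa$ noted just before the lemma. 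Everything else is Plancherel-type bookkeeping, so I do not anticipate a real obstacle.
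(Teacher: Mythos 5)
Your proof is correct and is essentially the argument from~\cite{Killip2018} that the paper cites; the paper itself does not reprove the lemma. Squaring, using cyclicity of the trace, passing to the Fourier side, and evaluating the self-convolution of $(\xi^2+\kappa^2)^{-1}$ by residues is exactly the standard route, and your residue computation producing the $\zeta^2+4\kappa^2$ denominator is the right bookkeeping.
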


Here, $\hs$ denotes the class of Hilbert--Schmidt operators on $L^2(\R)$.  Such operators are automatically continuous due to the inequality
\begin{equation*}
\norm{A}_{\op} \leq \norm{A}_{\hs} = \sqrt{ \tr\{ A^*A \} } ,
\end{equation*}
where $\op$ denotes the operator norm.  Moreover, the space $\hs$ forms a two-sided ideal in the space of bounded operators:
\begin{equation*}
\norm{ABC}_{\hs} \leq \norm{A}_{\op} \norm{B}_{\hs} \norm{C}_{\op} .
\end{equation*}
Lastly, the product of two Hilbert--Schmidt operators is trace class, and we have
\begin{equation*}
|\tr(AB)| \leq \norm{AB}_{\tc} \leq \norm{A}_{\hs} \norm{B}_{\hs} .
\end{equation*}
Here, $\tc$ denotes the trace class; this consists of operators whose singular values are $\ell^1$-summable.  In this way, the second inequality above is simply an application of the Cauchy--Schwarz inequality.

The Green's function is the integral kernel for the resolvent $R_u$, which may be expressed as
\[
G(x,y) = \langle \delta_x, (-\partial_x^2 + u + \kappa^2)^{-1}\delta_y \rangle.
\]
As mentioned previously, its restriction to the diagonal plays a key role in our analysis.
\begin{proposition}[Diagonal Green's function \cite{Killip2019}]
\label{t:g}
There exists a constant $C>0$ so that the following statements are true for any $R>0$:
\begin{enumerate}
\item For each $\norm{u}_{H^{-1}_\kappa} \leq R$, the diagonal Green's function
\begin{equation*}
g(x;\kappa,u) := \langle \del_x , ( -\partial^2_x + u + \kappa^2 )^{-1} \del_x \rangle
\end{equation*}
exists for all $x\in\R$ and $\kappa\geq 1 + CR^2$.
\item The mappings
\begin{equation}
u \mapsto g - \tfrac{1}{2\kappa} \quad\tx{and}\quad
u\mapsto \tfrac{1}{g} - 2\kappa
\label{g diffeo}
\end{equation}
are real-analytic functionals from $\{ u: \norm{u}_{H^{-1}} \leq R \}$ into $H^1$ for all $\kappa\geq 1 + CR^2$.
\item We have the estimates
\begin{align}
\norm{ g(\kappa,u) - \tfrac{1}{2\kappa} }_{H^1_\kappa} &\lesssim \kappa^{-1} \norm{u}_{H^{-1}_\kappa} ,
\label{g est} \\
\bnorm{ \tfrac{1}{g(\kappa,u)} - 2\kappa }_{H^1_\kappa} &\lesssim \kappa \norm{u}_{H^{-1}_\kappa}
\label{1/g est}
\end{align}
uniformly for $\kappa\geq 1 + CR^2$.
\end{enumerate}
\end{proposition}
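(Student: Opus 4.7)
My plan is to build $g$ from the Neumann expansion \eqref{Ru-exp}. The zeroth-order term comes from the explicit free resolvent kernel $R_0(x,y) = \tfrac{1}{2\kappa}e^{-\kappa|x-y|}$, which yields $\langle\delta_x,R_0\delta_x\rangle = \tfrac{1}{2\kappa}$ independently of $x$; this suggests the formal series
\[
g(x;\kappa,u) - \tfrac{1}{2\kappa} = \sum_{\ell\geq 1} (-1)^\ell \langle \delta_x, (R_0 u)^\ell R_0 \delta_x\rangle =: \sum_{\ell\geq 1} T_\ell(x).
\]
The whole proposition reduces to showing this converges in $H^1_\kappa$ with the bound \eqref{g est} and depends analytically on $u$.

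To estimate $T_\ell$ in $H^1_\kappa$, I would test against $\phi\in H^{-1}_\kappa$ and use the factorization $(R_0 u)^\ell R_0 = \sqrt{R_0}\,(\sqrt{R_0}u\sqrt{R_0})^\ell\,\sqrt{R_0}$ together with cyclicity of the trace to write
\[
\langle T_\ell,\phi\rangle = (-1)^\ell\,\tr\bigl\{(\sqrt{R_0}\phi\sqrt{R_0})(\sqrt{R_0}u\sqrt{R_0})^\ell\bigr\}.
\]
Splitting off one Hilbert--Schmidt factor for $\phi$ and one for $u$, controlling the remaining $\ell-1$ copies of $\sqrt{R_0}u\sqrt{R_0}$ in operator norm, and applying \eqref{HS} produces
\[
|\langle T_\ell,\phi\rangle| \leq \kappa^{-1}\|\phi\|_{H^{-1}_\kappa}\bigl(\kappa^{-1/2}\|u\|_{H^{-1}_\kappa}\bigr)^\ell.
\]
Taking $C$ large enough that the hypothesis $\kappa\geq 1+CR^2$ forces $\kappa^{-1/2}\|u\|_{H^{-1}_\kappa}\leq \tfrac12$ (using $\|u\|_{H^{-1}_\kappa}\leq \|u\|_{H^{-1}}$ for $\kappa\geq 1$ in part~(2)), the geometric series converges absolutely, and duality delivers both the existence in (1) and the bound \eqref{g est}. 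Real-analyticity in \eqref{g diffeo} follows because each $T_\ell$ is a bounded homogeneous polynomial of degree $\ell$ in $u$ with values in $H^1_\kappa \simeq H^1$, and the series converges uniformly on the ball in question.

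For \eqref{1/g est}, writing $h:=g-\tfrac{1}{2\kappa}$ and
\[
\tfrac{1}{g} - 2\kappa = -\frac{4\kappa^2 h}{1+2\kappa h},
\]
the embedding $\|h\|_{L^\infty}\lesssim \kappa^{-1/2}\|h\|_{H^1_\kappa}$ together with \eqref{g est} gives $\|2\kappa h\|_{L^\infty}\ll 1$, which justifies expanding the denominator as a Neumann series; the algebra bound $\|fg\|_{H^1_\kappa}\lesssim \kappa^{-1/2}\|f\|_{H^1_\kappa}\|g\|_{H^1_\kappa}$ then closes the sum in $H^1_\kappa$ and produces the stated inequality, while the real-analyticity follows by composing $u\mapsto h$ with the scalar-analytic map $h\mapsto -4\kappa^2 h(1+2\kappa h)^{-1}$. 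The main obstacle is extracting the sharp $\kappa^{-\ell/2}$ decay in the bound on $T_\ell$: a crude estimate that split all $\ell+1$ factors of $\sqrt{R_0}$ in Hilbert--Schmidt norm would lose powers of $\kappa$ and fail to close the geometric series at the threshold $\kappa\gtrsim 1+R^2$. The duality-plus-cyclicity maneuver above sidesteps this by making only two Hilbert--Schmidt factors appear, invoking \eqref{HS} a single time on each, with the remaining operator-norm factors absorbed by $\|\cdot\|_{\op}\leq \|\cdot\|_{\hs}$.
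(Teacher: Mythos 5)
The paper does not prove this proposition itself --- it is cited directly from \cite{Killip2019} --- but your Neumann-series/trace-duality argument reconstructs the standard proof from that reference, and the structure is sound: expand $g-\tfrac1{2\kappa}=\sum_{\ell\geq1}T_\ell$, dualize each term as $\tr\{(\sqrt{R_0}\phi\sqrt{R_0})(\sqrt{R_0}u\sqrt{R_0})^\ell\}$, place two factors in Hilbert--Schmidt via \eqref{HS} and the rest in operator norm, sum the geometric series, then treat $\tfrac1g-2\kappa$ through the identity $-4\kappa^2h/(1+2\kappa h)$ and the $H^1_\kappa$ algebra bound. One bookkeeping slip to flag: your intermediate estimate should read $|\langle T_\ell,\phi\rangle|\leq\kappa^{-1/2}\snorm{\phi}_{H^{-1}_\kappa}\big(\kappa^{-1/2}\snorm{u}_{H^{-1}_\kappa}\big)^\ell$ rather than with a leading $\kappa^{-1}$, since \eqref{HS} applied to the $\phi$ factor produces only one power $\kappa^{-1/2}\snorm{\phi}_{H^{-1}_\kappa}$ and the remaining $\ell$ powers of $\kappa^{-1/2}\snorm{u}_{H^{-1}_\kappa}$ come one per copy of $\sqrt{R_0}u\sqrt{R_0}$; the version you wrote would give $\snorm{g-\tfrac1{2\kappa}}_{H^1_\kappa}\lesssim\kappa^{-3/2}\snorm{u}_{H^{-1}_\kappa}$, which is too strong, as the $\ell=1$ term $h_1=-\kappa^{-1}R_0(2\kappa)u$ already satisfies $\snorm{h_1}_{H^1_\kappa}=\kappa^{-1}\snorm{u}_{H^{-1}_\kappa}$ exactly. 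The corrected bound still closes the geometric series once $\kappa^{-1/2}\snorm{u}_{H^{-1}_\kappa}\leq\tfrac12$ and delivers \eqref{g est} and \eqref{1/g est} as stated.
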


As a consequence of the expansion of $R_u$ in \eqref{Ru-exp}, we have a similar expansion for $g$:
\begin{equation}
g = \tfrac{1}{2\kappa} + h_1 + h_2 + \dots, \qquad
h_\ell(x) = (-1)^\ell \langle \del_x , (R_0u)^\ell R_0 \del_x \rangle .
\label{g}
\end{equation}
In particular, using the integral kernel $\langle \del_x,R_0(\kappa)\del_y\rangle = \tfrac{1}{2\kappa} e^{-\kappa|x-y|}$ for the free resolvent, we find that 
\[
h_1 = -\kappa^{-1}R_0(2\kappa)u.
\]

As it turns out, it is the function $\frac{1}{g}$ that is most closely related to the trace of the renormalized logarithm mentioned previously. 
However, in general, $\int \frac{1}{g}-2\kappa\,dx$ diverges for $u$ in any $H^s$ space. To rectify this, one also needs to remove the linear term in $u$, which formally integrates to 
a multiple of $\int u \, dx$; this is a conserved quantity for the KdV flow, and thus is harmless. As a side note, a similar renormalization was also implemented in \cite{KT} for the logarithm of the transmission coefficient. 

Taking these renormalizations into account, we are now prepared to define the conserved quantity which controls the $H^{-1}$ norm of $u$:
\begin{equation}
\alpha(\kappa,u) := \int \rho \dx , \qquad
\rho(x;\kappa,u) := - \frac{1}{2g(x;\kappa,u)} + \kappa + 2\kappa R_0(2\kappa) u .
\label{alpha}
\end{equation}
This formula for $\alpha$ is the trace of the integral kernel $-1/2G(x,y;\kappa,u)$ restricted to the diagonal, with the first two terms of its Taylor series about $u \equiv 0$ removed.
\begin{proposition}[Introducing $\alpha$ \cite{Killip2019}]
\label{t:alpha}
There exists a constant $C>0$ so that the following statements are true for any $R>0$:
\begin{enumerate}
\item The quantities $\rho(x;\kappa,u)$ and $\alpha(\kappa,u)$ defined in~\eqref{alpha} are finite and nonnegative for all $x\in\R$, $\norm{u}_{H^{-1}_\kappa}\leq R$, and $\kappa\geq 1+CR^2$.
\item The mapping $u\mapsto \alpha$ is a real analytic functional on $\{ u: \norm{u}_{H^{-1}_\kappa}\leq R \}$ for all $\kappa\geq 1+CR^2$.
\item We have
\begin{equation}
\tfrac{1}{4} \kappa^{-1} \norm{u}_{H^{-1}_\kappa}^2 \leq \alpha(\kappa,u) \leq \kappa^{-1} \norm{u}_{H^{-1}_\kappa}^2
\label{alpha est}
\end{equation}
for all $\norm{u}_{H^{-1}_\kappa} \leq R$ and $\kappa\geq 1+CR^2$.
\end{enumerate}
\end{proposition}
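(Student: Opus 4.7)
My plan is to reduce the proposition to spectral information about the self-adjoint Hilbert--Schmidt operator
\[
A := \sqrt{R_0(\kappa)}\, u\, \sqrt{R_0(\kappa)},
\]
whose Hilbert--Schmidt norm by \eqref{HS} equals $\norm{A}_{\hs} = \kappa^{-1/2}\norm{u}_{H^{-1}_\kappa}$. Choosing the constant $C$ sufficiently large ensures $\norm{A}_{\op}\leq\norm{A}_{\hs}\leq \tfrac{1}{2}$ throughout the parameter range of the hypothesis. The key identification I would establish is
\[
\alpha(\kappa,u) \;=\; \tr\bigl[A - \log(1+A)\bigr] \;=\; \sum_{\ell\geq 2} \frac{(-1)^{\ell}}{\ell}\,\tr(A^{\ell}),
\]
after which the three conclusions concerning $\alpha$ become operator-theoretic statements.

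To derive this identification, I would start from the expansion \eqref{g} of $g$ and expand $1/g$ as a geometric series in $2\kappa\sum_{\ell\geq 1} h_\ell$. A direct bookkeeping (using $h_1 = -\kappa^{-1}R_0(2\kappa)u$) shows that the $O(u)$ contribution to $-\tfrac{1}{2g}+\kappa$ equals $-2\kappa R_0(2\kappa)u$, which is cancelled by the counterterm in the definition \eqref{alpha}; hence $\rho$ begins at order $u^2$. To match the integral of the quadratic and higher-order contributions with the trace series, one computes each $\int h_\ell$ and each multinomial product $\int h_{\ell_1}\cdots h_{\ell_k}$ in Fourier variables, where combinations of $\hat u$ factors reassemble (through several nontrivial cancellations of the same flavor as the contour-integral identity used to prove \eqref{HS}) into $\tfrac{(-1)^\ell}{\ell}\tr(A^\ell)$.

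Granted the identification, parts (1)--(3) for $\alpha$ are then immediate. Each $A^\ell$ is trace class for $\ell\geq 2$ with $\norm{A^\ell}_{\tc}\leq \norm{A}_{\hs}^{\ell}$, so the series converges absolutely and $\alpha$ is finite. Real analyticity follows because $u\mapsto A$ is linear into $\hs$, and $z\mapsto z-\log(1+z)$ is holomorphic on a neighborhood of the spectrum of $A$, which is contained in $[-\tfrac12,\tfrac12]$. Diagonalizing the self-adjoint operator $A$ into eigenvalues $\{\lambda_n\}$ yields $\sum\lambda_n^2 = \norm{A}_{\hs}^2$ and
\[
\alpha \;=\; \sum_n \bigl[\lambda_n - \log(1+\lambda_n)\bigr],
\]
which is nonnegative since $t-\log(1+t)\geq 0$ for all $t>-1$. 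The elementary inequalities $\tfrac14 t^2 \leq t-\log(1+t) \leq t^2$ valid for $|t|\leq \tfrac12$ then produce \eqref{alpha est} after summation and one invocation of \eqref{HS}.

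The main obstacle is the pointwise nonnegativity of $\rho$, which does not reduce to a trace and so requires a separate structural argument. The strategy I would pursue uses the Jost representation $g(x) = -\phi_-(x)\phi_+(x)/W$ in terms of the positive real solutions $\phi_\pm$ of $(L_u+\kappa^2)\phi = 0$ decaying at $\pm\infty$. Writing $m_\pm := \pm\phi_\pm'/\phi_\pm$, one obtains $-\tfrac{1}{2g}+\kappa = -\tfrac{1}{2}[(m_- - \kappa)+(m_+ - \kappa)]$, and the Riccati equations $m_-' + (m_-^2-\kappa^2) = u$ and $-m_+' + (m_+^2-\kappa^2) = u$ derived from the Lax equation allow one to absorb the counterterm $2\kappa R_0(2\kappa)u$ and rewrite $\rho$ pointwise as a sum of manifestly nonnegative quadratic expressions in $m_\pm - \kappa$. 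The delicate technical point is carrying out this algebraic reorganization under only the $H^{-1}_\kappa$ hypothesis on $u$, while controlling the asymptotics of $\phi_\pm$ at $\pm\infty$ to justify all the manipulations.
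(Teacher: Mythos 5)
This proposition appears in the paper purely as a citation to~\cite{Killip2019}; the paper does not reprove it, so the comparison must be against that reference's argument rather than anything written here. Your outline does, in fact, reconstruct the skeleton of the argument in~\cite{Killip2019}: $\alpha$ is identified with $\tr[A - \log(1+A)] = \sum_{\ell\geq 2}\tfrac{(-1)^\ell}{\ell}\tr(A^\ell)$ for $A=\sqrt{R_0}\,u\,\sqrt{R_0}$, whence parts (2) and (3) follow from the linearity of $u\mapsto A$, the holomorphy of $z\mapsto z-\log(1+z)$, and the elementary bounds $\tfrac14 t^2 \leq t-\log(1+t)\leq t^2$ on $|t|\leq\tfrac12$ applied to the eigenvalues; and pointwise nonnegativity of $\rho$ is indeed handled separately via the Weyl/Jost solutions and the Riccati equations, exactly as you indicate.

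Two remarks on your sketch. First, a sign slip: with $m_\pm := \pm\phi_\pm'/\phi_\pm$ you get $m_\pm\to\mp\kappa$ as $x\to\pm\infty$, which is inconsistent both with $\rho\to 0$ and with the Riccati equations you then write; the intended convention is $m_\pm := \mp\phi_\pm'/\phi_\pm$, giving $m_\pm\to\kappa$ and $\tfrac1g=m_++m_-$. Second, you stop short of the decisive step that makes nonnegativity manifest: writing $a_\pm = m_\pm-\kappa$, the Riccati relations become $(2\kappa-\partial)a_+ = u-a_+^2$ and $(2\kappa+\partial)a_- = u-a_-^2$, and together with the partial-fraction identity $2\kappa R_0(2\kappa)=\tfrac12[(2\kappa-\partial)^{-1}+(2\kappa+\partial)^{-1}]$ the linear-in-$u$ terms cancel and one obtains
\[
\rho \;=\; \tfrac12\,(2\kappa-\partial)^{-1}\!\bigl[a_+^2\bigr] \;+\; \tfrac12\,(2\kappa+\partial)^{-1}\!\bigl[a_-^2\bigr],
\]
which is nonnegative because both resolvents have positive integral kernels. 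Similarly, the identification of $\int\rho\,dx$ with the trace series is nontrivial and is where a good deal of the real work in~\cite{Killip2019} occurs (there, essentially via computing $d\alpha|_u$ and recognizing $\tfrac{1}{2\kappa}-g$); acknowledging that this bookkeeping requires care, as you do, is appropriate, but these are precisely the steps one cannot leave as a gesture if the goal is a complete proof.
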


The authors of \cite{Killip2019} also proved that under the \eqref{KdV} flow, we have the following density-flux equation 
\begin{equation}
\ddt \rho + j' = 0 ,
\label{alpha micro}
\end{equation}
where $\rho = \rho(x;\kappa,u)$ is defined in~\eqref{alpha} and
\begin{equation}
j(x;\kappa,u) := \tfrac{1}{g} ( 4\kappa^3 g - 2\kappa^2 + u ) + 2\kappa R_0(2\kappa) (u''-3u^2) .
\label{j}
\end{equation}
We see in particular by integrating \eqref{alpha micro} in space that $\alpha$ is conserved under the \eqref{KdV} flow.

By comparison, the coefficients $a_j$ in \eqref{gKdV} break the density-flux equation \eqref{alpha micro}.  Instead, we obtain an additional term as follows:
\begin{equation}
\ddt \rho + j' = d\rho|_u \big[ (a_1u')' + a_2u^2 + a_3u' + a_4u \big] ,
\label{alpha micro 2}
\end{equation}
where 
\begin{equation}
d\rho|_u(f)
= \frac{d}{ds} \rho(x;\kappa,u+sf) \bigg|_{s=0}
= \tfrac{1}{2g^2} dg|_u (f) + 2\kappa R_0(2\kappa)f
\label{drho}
\end{equation}
is the functional derivative of $\rho$ at $u$, and
\begin{equation}
dg|_u(f) = - \int G(x,y) f(y) G(y,x) \dy .
\label{dg}
\end{equation}
The identity \eqref{alpha micro 2} lies at the heart of both of the proofs of our a-priori and local smoothing estimates.

In order to leverage local smoothing, we will frequently need to commute slowly varying weights past $R_0$, as in the following estimates.
\begin{lemma}
If $w:\R \to (0,\infty)$ satisfies
\begin{equation}
|w'(x)| + |w''(x)| \lesssim w(x) \quad\tx{and}\quad 
\frac{w(y)}{w(x)} \lesssim e^{|x-y|/2} 
\label{w}
\end{equation}
uniformly for $x,y\in\R$, then
\begin{equation}
\norm{ w R_0 \tfrac{1}{w} }_{L^p\to L^p} \lesssim \kappa^{-2} , \quad
\norm{ w \partial R_0 \tfrac{1}{w} }_{L^p\to L^p} \lesssim \kappa^{-1} , \quad
\norm{ w R_0 \tfrac{1}{w} }_{H^{-1}_\kappa\to H^1_\kappa} \lesssim 1 
\label{comm}
\end{equation}
uniformly for $\kappa \geq 1$, for any $1\leq p \leq \infty$.
\end{lemma}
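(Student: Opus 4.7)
The integral kernel of $R_0$ is $\tfrac{1}{2\kappa}e^{-\kappa|x-y|}$, so $wR_0\tfrac{1}{w}$ and $w\partial R_0\tfrac{1}{w}$ have integral kernels
\begin{equation*}
K(x,y)=\tfrac{w(x)}{2\kappa w(y)}e^{-\kappa|x-y|},\qquad K'(x,y)=-\tfrac{w(x)}{2w(y)}\operatorname{sgn}(x-y)e^{-\kappa|x-y|}.
\end{equation*}
Applying the pointwise bound $w(x)/w(y)\lesssim e^{|x-y|/2}$ from \eqref{w} together with the inequality $\kappa-\tfrac12\geq\tfrac{\kappa}{2}$ valid for $\kappa\geq 1$, these are majorized by $\kappa^{-1}e^{-\kappa|x-y|/2}$ and $e^{-\kappa|x-y|/2}$, respectively. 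Since $\int e^{-\kappa|x-y|/2}\,dy\lesssim\kappa^{-1}$, Schur's test immediately yields the first two bounds in \eqref{comm}.

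For the third bound I decompose $f\in H^{-1}_\kappa$ as $f=2\kappa g_1+\partial g_2$ on the Fourier side, taking
\begin{equation*}
\widehat{g_1}(\xi)=\tfrac{2\kappa\,\hat f(\xi)}{\xi^2+4\kappa^2},\qquad \widehat{g_2}(\xi)=\tfrac{-i\xi\,\hat f(\xi)}{\xi^2+4\kappa^2},
\end{equation*}
which gives $\norm{g_1}_{L^2}+\norm{g_2}_{L^2}\lesssim\norm{f}_{H^{-1}_\kappa}$ by Parseval. Using the elementary bound $\norm{h}_{H^1_\kappa}\lesssim\kappa\norm{h}_{L^2}+\norm{\partial h}_{L^2}$, the product rule $\partial(wR_0\tfrac{1}{w})=\tfrac{w'}{w}\cdot wR_0\tfrac{1}{w}+w\partial R_0\tfrac{1}{w}$, and the bound $\norm{w'/w}_{L^\infty}\lesssim 1$ from \eqref{w}, the problem reduces to proving the two $L^2$-estimates
\begin{equation*}
\norm{wR_0\tfrac{1}{w}f}_{L^2}\lesssim\kappa^{-1}\norm{f}_{H^{-1}_\kappa},\qquad \norm{w\partial R_0\tfrac{1}{w}f}_{L^2}\lesssim\norm{f}_{H^{-1}_\kappa}.
\end{equation*}
The contributions from the $2\kappa g_1$ piece in both estimates are controlled directly by the first two bounds of \eqref{comm} with $p=2$.

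The contributions from $\partial g_2$ are handled using the commutation identity $\tfrac{1}{w}\partial g_2=\partial(\tfrac{g_2}{w})+\tfrac{w'}{w^2}g_2$ together with $R_0\partial=\partial R_0$ and the operator identity $\partial R_0\partial=\kappa^2 R_0-I$, which simply restates $R_0^{-1}=-\partial^2+\kappa^2$. These give
\begin{equation*}
w\partial R_0\tfrac{1}{w}(\partial g_2)=-g_2+\kappa^2\bigl(wR_0\tfrac{1}{w}\bigr)g_2+\bigl(w\partial R_0\tfrac{1}{w}\bigr)\bigl(\tfrac{w'}{w}g_2\bigr),
\end{equation*}
and the three terms on the right are bounded in $L^2$ using the just-proven $L^2\to L^2$ bounds, with the factor $\kappa^2$ in the middle exactly absorbed by the $\kappa^{-2}$ gain from $wR_0\tfrac{1}{w}$. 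An analogous reorganization of $wR_0\tfrac{1}{w}(\partial g_2)$ produces the additional $\kappa^{-1}$ factor required to match the first $L^2$-estimate.

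The heart of the argument is the operator identity $\partial R_0\partial=\kappa^2 R_0-I$, which trades two derivatives for a bounded operator plus a pointwise term and thereby allows the derivative on the target side of the $H^{-1}_\kappa\to H^1_\kappa$ estimate to be absorbed. Aside from this algebraic ingredient, the proof is a routine combination of Schur's test with the bounded-ratio hypothesis $w'/w\in L^\infty$.
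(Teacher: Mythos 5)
Your proof is correct. Note that the paper does not actually supply a proof of this lemma; it defers to an external reference (Bringmann--Killip--Vi\c{s}an, Lemma~2.7), so there is no in-paper argument to compare against. What you have done is fill in a self-contained elementary proof, and every step checks out: the kernel of $R_0(\kappa)$ is $\tfrac{1}{2\kappa}e^{-\kappa|x-y|}$, the weight ratio hypothesis (applied with $x$ and $y$ interchanged) dominates the conjugated kernels by $\kappa^{-1}e^{-\kappa|x-y|/2}$ and $e^{-\kappa|x-y|/2}$ once $\kappa\geq 1$ is used, and Schur's test then yields the first two bounds for every $1\leq p\leq\infty$. For the $H^{-1}_\kappa\to H^1_\kappa$ bound, the frequency decomposition $f=2\kappa g_1+\partial g_2$ with $\norm{g_1}_{L^2}+\norm{g_2}_{L^2}\lesssim\norm{f}_{H^{-1}_\kappa}$, the reduction of $\norm{\cdot}_{H^1_\kappa}$ to $\kappa\norm{\cdot}_{L^2}+\norm{\partial(\cdot)}_{L^2}$ via the product rule and $\norm{w'/w}_{L^\infty}\lesssim 1$, and finally the commutation $\tfrac1w\partial g_2=\partial(\tfrac{g_2}{w})+\tfrac{w'}{w^2}g_2$ together with $\partial R_0\partial=\kappa^2R_0-I$ are all correct; the $\kappa^2$ that appears is precisely canceled by the $\kappa^{-2}$ gain from the first bound, and the lower-order terms pick up additional negative powers of $\kappa$, so everything closes with constants uniform in $\kappa\geq 1$. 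This is a clean and, as far as I can tell, standard way to prove such weighted resolvent commutator estimates, and it succeeds where the paper merely points to a reference.
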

The proof is elementary; see, for example,~\cite{Bringmann2021}*{Lem.~2.7}. 

We will use this lemma in particular for the functions $\psi^\ell$, $\ell=1,2,3$, where 
\begin{equation*}
\psi(x) =\operatorname{sech}(\tfrac{x}{6}).
\end{equation*}
The constant 6 is simply included to ensure that $\psi^{\ell}$ satisfies \eqref{w} for $\ell=1,2,3$.  (In turn, the constant 2 in \eqref{w} is based on the integral kernel $\langle \del_x,R_0\del_y\rangle = \tfrac{1}{2\kappa} e^{-\kappa|x-y|}$ and the condition $\kappa\geq 1$.)

Over the course of our analysis, we will also encounter the `double commutator' 
\begin{equation*}
R_0\psi^2 R_0 - \psi R_0^2\psi =[[R_0, \psi], R_0\psi].
\end{equation*}
In order to bound this operator, the naive estimate~\eqref{comm} unfortunately does not yield enough decay as $\kappa\to\infty$.  To this end, we record the following operator estimates, which capture the double commutator structure:
\begin{lemma}
We have
\begin{gather}
\bnorm{ \tfrac{1}{\psi} ( R_0\psi^2 R_0 - \psi R_0^2\psi ) \tfrac{1}{\psi} }_{H^{-2}_\kappa \to H^2_\kappa} \lesssim \kappa^{-2} ,
\label{double comm} \\
\bnorm{ \tfrac{1}{\psi} ( R_0 \partial \psi^2 R_0\partial - \psi R_0^2 \partial^2 \psi ) \tfrac{1}{\psi} }_{H^{-2}_\kappa \to H^2_\kappa} \lesssim 1
\label{double comm1}
\end{gather}
uniformly for $\kappa\geq 1$.
\end{lemma}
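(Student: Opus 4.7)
Plan.  Both operators in the lemma are double commutators.  The first, as already noted in the text, is
\[
R_0\psi^2 R_0 - \psi R_0^2\psi = [[R_0,\psi], R_0\psi].
\]
For the second, set $S := R_0\partial = \partial R_0$; then $R_0\partial\psi^2 R_0\partial = S\psi^2 S$ and $R_0^2\partial^2 = S^2$, so the same algebra gives
\[
R_0\partial\psi^2 R_0\partial - \psi R_0^2\partial^2\psi = [[S,\psi], S\psi].
\]
Both bounds therefore reduce to estimating a double commutator of the form $[[A,\psi], A\psi]$ with $A \in \{R_0, S\}$.

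The inner commutators are computed from $R_0^{-1} = -\partial_x^2 + \kappa^2$ and $[-\partial_x^2,\psi] = -(\psi''+2\psi'\partial_x)$, which yield
\[
[R_0,\psi] = R_0(\psi''+2\psi'\partial_x)R_0, \qquad [S,\psi] = R_0(\psi''+2\psi'\partial_x)S + R_0\psi'.
\]
Each has two $R_0$-factors and gains one order of smoothing over a naive composition.  To bring out the \emph{additional} order of smoothing intrinsic to the double commutator, I would apply Leibniz once more,
\[
[[R_0,\psi], R_0\psi] = [[R_0,\psi], R_0]\psi + R_0[[R_0,\psi],\psi],
\]
and use the identities $[[R_0,\psi], R_0] = -R_0[R_0, \psi''+2\psi'\partial_x]R_0$ and $[[R_0,\psi],\psi] = 2R_0(\psi''+2\psi'\partial_x)R_0(\psi''+2\psi'\partial_x)R_0 + 2R_0(\psi')^2 R_0$ (both derived from Leibniz and the scalar identity $[\psi''+2\psi'\partial_x,\psi] = 2(\psi')^2$) to flatten everything into a finite sum.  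Every summand is then a product of four or five $R_0$-factors, smooth bounded multipliers $\psi^{(j)}$, and at most two bare derivatives, with total joint $(\xi,\kappa)$-order manifestly $-6$ (respectively $-4$ in the $S$-case after the extra outer derivatives are accounted for).

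Finally, I distribute the outer factors $1/\psi$ across the expansion and move them past each $R_0$ using the commutator lemma~\eqref{comm}: this replaces every $R_0$ encountered by the conjugated resolvent $\frac{1}{\psi}R_0\psi$, which inherits the smoothing and $\kappa$-decay of $R_0$, and emits a companion $\psi^{-1}$ that is absorbed into a neighboring $\psi^{(j)}$.  A direct order count then finishes the proof: four $R_0$-factors contribute eight units of smoothing or $\kappa^{-1}$, the target mapping $H^{-2}_\kappa \to H^2_\kappa$ consumes four, the internal derivatives consume one or two more, and the remaining budget gives $\kappa^{-2}$ for the first estimate and $O(1)$ for the second.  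The main obstacle is the second Leibniz expansion: a single application alone leaves the estimate a full order of $\kappa^{-1}$ short of the claim because $[[R_0,\psi], R_0]$ itself conceals a cancellation that only becomes visible after expanding the inner commutator $[R_0, \psi''+2\psi'\partial_x]$ using the same formula once more; once this cancellation is exhibited, the remaining argument is a careful but routine bookkeeping of weights and $\kappa$-powers.
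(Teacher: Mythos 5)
Your proposal is correct, and it takes a genuinely different route from the one the paper cites. The paper refers to Bringmann--Killip--Visan [Bringmann2021, Lem.~2.10] for \eqref{double comm} and says \eqref{double comm1} ``can be verified using a similar argument''; there the cancellation is exhibited by computing the double commutator ``in a symmetric way'', i.e.\ on the kernel/symbol side. Writing $\widehat{\psi^2}(\xi-\eta)=(\widehat{\psi}*\widehat{\psi})(\xi-\eta)$, the Fourier kernel of $R_0\psi^2R_0-\psi R_0^2\psi$ becomes
\[
\int\widehat{\psi}(\xi-\zeta)\,\widehat{\psi}(\zeta-\eta)\Big[\tfrac{1}{(\xi^2+\kappa^2)(\eta^2+\kappa^2)}-\tfrac{1}{(\zeta^2+\kappa^2)^2}\Big]\,d\zeta,
\]
and the bracket vanishes to second order in $\xi-\zeta$, $\zeta-\eta$, which yields the $\kappa^{-2}$ gain in one stroke. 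Your route is purely operator-algebraic: write the difference as $[[A,\psi],A\psi]$ with $A\in\{R_0,\,R_0\partial\}$, apply Leibniz twice (crucially also opening the inner commutator $[R_0,\psi''+2\psi'\partial]$---you are right that a single application leaves you one power of $\kappa^{-1}$ short), and flatten into a finite sum of products of $R_0$'s, $\partial$'s, and multipliers $\psi^{(j)}$ of total symbol order $\leq -6$ (resp.\ $\leq -4$), after which the estimate follows by counting. Your identities all check out: $[[A,\psi],A\psi]=A\psi^2A-\psi A^2\psi$, $[R_0,\psi]=R_0(\psi''+2\psi'\partial)R_0$, $[[R_0,\psi],R_0]=-R_0[R_0,B]R_0$ with $[R_0,B]=R_0(\psi''''+4\psi'''\partial+4\psi''\partial^2)R_0$, and $[[R_0,\psi],\psi]=2R_0BR_0BR_0+2R_0(\psi')^2R_0$. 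Two details worth spelling out in a final write-up: (i) each of the two commutators emits exactly one $\psi^{(j)}$-type factor, so the weight bookkeeping does close, using that $\psi^{(j)}/\psi$ is bounded for $\psi=\operatorname{sech}(\cdot/6)$; (ii) you need the conjugation bound $\tfrac{1}{\psi}R_0\psi:H^s_\kappa\to H^{s+2}_\kappa$ at a range of indices, whereas \eqref{comm} as stated only records $s=-1$---the same elementary kernel argument gives the general case, but it deserves a line. Overall your approach involves more bookkeeping than the symmetric kernel computation (especially for \eqref{double comm1}, where $[S,\psi]=R_0\psi'+R_0(\psi''+2\psi'\partial)S$ generates extra terms), but it is more transparent about where the two factors of $\kappa^{-1}$ are gained and avoids any explicit Fourier integral.
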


In comparison to \eqref{comm}, the proofs of \eqref{double comm} and \eqref{double comm1} exhibit extra cancellation by computing the double commutator in a symmetric way.  For example, \eqref{double comm} follows from~\cite{Bringmann2021}*{Lem.~2.10}, and \eqref{double comm1} can be verified using a similar argument.

\section{The local smoothing norm}
\label{s:lsk}

Here, we will record some useful estimates for the local smoothing norm
\begin{equation}
\norm{u}_{\ls_\kappa} = \sup_{x_0\in\R} \bnorm{ \psi_{x_0}u' }_{L^2_tH^{-1}_\kappa([-T,T]\times\R)} ,
\label{LSk def}
\end{equation}
where $T>0$ and
\begin{equation*}
\psi(x) = \operatorname{sech}(\tfrac{x}{6}) , \qquad
\psi_{x_0}(x) = \psi(x-x_0) .
\end{equation*}

We begin with the following elementary observations:
\begin{lemma}
Given a Schwartz function $\phi(x)$, we have
\begin{align}
\norm{\phi u}_{L^2_tH^{-2}_\kappa}
&\lesssim \kappa^{-1} \norm{u}_{L^2_tH^{-1}_\kappa}  ,
\label{LSk 4 new}\\
\norm{(\phi u)'}_{L^2_tH^{-2}_\kappa} + \norm{ \phi u'}_{L^2_tH^{-2}_\kappa} 
&\lesssim \kappa^{-1} \big( \norm{u}_{\ls_\kappa} + \norm{u}_{L^2_tH^{-1}_\kappa} \big),
\label{LSk 3 new}\\
\norm{(\phi u)''}_{L^2_tH^{-2}_\kappa} + \norm{ \phi u'' }_{L^2_tH^{-2}_\kappa}
&\lesssim \norm{u}_{\ls_\kappa} + \kappa^{-1} \norm{u}_{L^2_tH^{-1}_\kappa} ,
\label{LSk 2 new}
\end{align}
uniformly for $\kappa\geq 1$ and $T>0$.  (All space-time norms are taken over $[-T,T]\times\R$.)
\end{lemma}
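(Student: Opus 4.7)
The plan is to reduce all three estimates to two elementary ingredients. The first is the pair of Fourier-side comparisons
\begin{equation*}
\norm{f}_{H^{-2}_\kappa}\leq (2\kappa)^{-1}\norm{f}_{H^{-1}_\kappa},\qquad \norm{\partial f}_{H^{-2}_\kappa}\leq \norm{f}_{H^{-1}_\kappa},
\end{equation*}
both of which follow immediately from the definition $\norm{f}_{H^s_\kappa}^2=\int|\wh f(\xi)|^2(\xi^2+4\kappa^2)^s\dxi$ together with the pointwise bounds $(\xi^2+4\kappa^2)^{-1}\leq(4\kappa^2)^{-1}$ and $\xi^2\leq\xi^2+4\kappa^2$. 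The second is the multiplier estimate $\norm{\varphi g}_{H^{-1}_\kappa}\lesssim\norm{\varphi}_{W^{1,\infty}}\norm{g}_{H^{-1}_\kappa}$ already recorded in Section~\ref{s:prelim}. With these two tools, \eqref{LSk 4 new} is immediate: embed $H^{-1}_\kappa\hookrightarrow H^{-2}_\kappa$ with gain $\kappa^{-1}$ and apply the multiplier bound with $\varphi=\phi$.

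To connect the remaining two estimates to the local smoothing norm I would decompose
\begin{equation*}
\phi(x)=\sum_{n\in\Z} f_n(x)\psi_n(x),\qquad f_n(x):=\phi(x)\chi_n(x)/\psi_n(x),
\end{equation*}
where $\{\chi_n\}_{n\in\Z}$ is a smooth partition of unity with $\operatorname{supp}\chi_n\subset[n-1,n+1]$. On the support of $\chi_n$ the weight $\psi_n=\operatorname{sech}((\cdot-n)/6)$ is uniformly bounded below, so $1/\psi_n$ and its derivative are uniformly controlled there; combined with the Schwartz decay of $\phi$, this gives $\norm{f_n}_{W^{1,\infty}}\lesssim_N\langle n\rangle^{-N}$ for any $N$. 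Applying the multiplier bound term by term then yields the key inequality
\begin{equation*}
\norm{\phi u'}_{L^2_t H^{-1}_\kappa}\leq\sum_n\norm{f_n}_{W^{1,\infty}}\norm{\psi_n u'}_{L^2_t H^{-1}_\kappa}\lesssim\norm{u}_{\ls_\kappa},
\end{equation*}
which is precisely how the local smoothing information enters.

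For \eqref{LSk 3 new} I would write $(\phi u)'=\phi' u+\phi u'$ and estimate the two occurrences of $\phi u'$ simultaneously: the factor $\phi' u$ is handled by \eqref{LSk 4 new} applied with $\phi'$ in place of $\phi$, while $\phi u'$ is bounded by first applying $H^{-1}_\kappa\hookrightarrow H^{-2}_\kappa$ (producing the prefactor $\kappa^{-1}$) and then invoking the display above. For \eqref{LSk 2 new} I would expand $(\phi u)''=\phi'' u+2\phi' u'+\phi u''$; the first two summands are controlled by \eqref{LSk 4 new} and \eqref{LSk 3 new} respectively, while for the remaining piece I would rewrite $\phi u''=\partial(\phi u')-\phi' u'$, apply $\norm{\partial g}_{H^{-2}_\kappa}\leq\norm{g}_{H^{-1}_\kappa}$ to reduce to the key inequality above (this time without the $\kappa^{-1}$ factor), and invoke \eqref{LSk 3 new} once more for the lower-order commutator $\phi' u'$.

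I do not anticipate a substantial obstacle here: the lemma is essentially bookkeeping, and the only mildly nontrivial ingredient is the partition-of-unity decomposition that converts a Schwartz weight into a summable superposition of unit translates of $\psi$. The main care required is in arranging the derivative distribution so that at most one derivative ever lands on $u$ inside an $H^{-1}_\kappa$ norm (so that $\ls_\kappa$ can be used), and in tracking which terms acquire the $\kappa^{-1}$ gain from the embedding $H^{-1}_\kappa\hookrightarrow H^{-2}_\kappa$ and which do not.
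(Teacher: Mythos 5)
Your argument is correct and is essentially the same as the paper's: you establish $\norm{\phi u'}_{L^2_tH^{-1}_\kappa}\lesssim\norm{u}_{\ls_\kappa}$ by superposing translates of $\operatorname{sech}$ against the Schwartz weight, then distribute derivatives via Leibniz, exactly as in the text. The only cosmetic difference is that you use a discrete partition of unity and the $W^{1,\infty}$ multiplier bound, while the paper uses a continuous partition $\phi=\tfrac1c\int\phi\psi_z^2\,dz$ and the $H^1$ (in fact $H^4$) multiplier bound; these are interchangeable and both appear in Section~\ref{s:prelim}.
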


\begin{proof}
For the estimate \eqref{LSk 4 new} we simply note that
\[
\| \phi u\|_{L^2_tH^{-2}_\kappa} \lesssim \| u \|_{L^2_tH^{-2}_\kappa} \lesssim \kappa^{-1} \norm{u}_{L^2_tH^{-1}_\kappa} .
\]

For the estimate \eqref{LSk 3 new} we use the Leibnitz rule 
$(\phi u)' = \phi'u + \phi u'$, and apply~\eqref{LSk 4 new} to the contribution of the first term $\phi'u$.
For the second, we write
\begin{equation*}
\int \psi_{z}^2(x) \dz = c
\end{equation*}
for some constant $c$, so that
\begin{equation*}
\phi(x) = \tfrac{1}{c} \int \phi(x)\psi_{z}^2(x) \dz .
\end{equation*}
Then, it follows that 
\begin{equation*}
\norm{\phi u'}_{H^{-1}_\kappa}
\leq \tfrac{1}{c} \int \norm{\phi \psi_z^2 u'}_{H^{-1}_\kappa}\, \dz  \lesssim  \int \norm{\phi \psi_z}_{H^4} \| \psi_z u'\|_{H^{-1}_\kappa} \, \dz. 
\end{equation*}
Integrating in $z$ and noting that
\begin{equation*}
\int \norm{\phi\psi_z}_{H^4} \dz \lesssim_\phi 1 ,
\end{equation*}
we obtain 
\begin{equation}\label{phi-u'}
\| \phi u'\|_{H^{-1}_\kappa} \lesssim \|u\|_{\ls_\kappa} .
\end{equation}
The estimate~\eqref{LSk 3 new} then follows, as 
\[
\| \phi u'\|_{H^{-2}_\kappa} \lesssim \kappa^{-1} \| \phi u'\|_{H^{-1}_\kappa}.
\]

For the estimate \eqref{LSk 2 new}, we simply write 
\[
(\phi u)'' = (\phi u')' + \phi' u' +\phi'' u,
\]
and use \eqref{phi-u'} twice
and \eqref{LSk 4 new} once.
\end{proof}

The operator estimate~\eqref{LSk H1k 2} below will play an important role in our analysis.  In comparison to~\eqref{HS}, we now bound the operator norm (rather than Hilbert--Schmidt) and only require a local norm of $u$ to do so.

\begin{lemma}
We have
\begin{equation}
\norm{ fu' }_{L^2_tH^{-1}_\kappa} \lesssim \kappa^{-\frac12} \norm{f}_{H^1_\kappa} \norm{u}_{\ls_\kappa} \quad\tx{for all }f\in H^1(\R),
\label{LSk H1k}
\end{equation}
uniformly for $\kappa \geq 1$.  As a consequence,
\begin{equation}
\bnorm{\sqrt{R_0} u' \sqrt{R_0}}_{L^2_t\op} \lesssim \kappa^{-\frac12}\norm{u}_{\ls_\kappa} .
\label{LSk H1k 2}
\end{equation}
\end{lemma}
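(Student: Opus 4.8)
The main point is the bilinear bound~\eqref{LSk H1k}; the operator bound~\eqref{LSk H1k 2} will then follow from it by duality. To prove~\eqref{LSk H1k} I will pass to the dual formulation, localize \emph{both} the factor $f$ and the dual function, use the sharp product estimate $\norm{gh}_{H^1_\kappa}\lesssim\kappa^{-\frac12}\norm{g}_{H^1_\kappa}\norm{h}_{H^1_\kappa}$ to supply the gain $\kappa^{-\frac12}$, and finally assemble the localized pieces through Cauchy--Schwarz rather than the triangle inequality. Concretely, fix a smooth partition of unity $\sum_{n\in\Z}\chi_n^2\equiv 1$ with $\chi_n(x)=\chi(x-n)$ and $\operatorname{supp}\chi\subset[-1,1]$; since $\psi\geq c>0$ on $[-1,1]$, the functions $\chi_n/\psi_n$ lie in a bounded subset of $W^{1,\infty}$ uniformly in $n$. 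By duality,
\[
\norm{fu'}_{L^2_tH^{-1}_\kappa}=\sup_{\norm{H}_{L^2_tH^1_\kappa}\leq1}\Big|\int_{-T}^T\langle fu'(t),H(t)\rangle\dt\Big| ,
\]
and inserting $1=\sum_n\chi_n^2$ and writing $u'=\psi_n^{-1}(\psi_n u')$ in the $n$-th term gives
\[
\int_{-T}^T\langle fu'(t),H(t)\rangle\dt=\sum_n\int_{-T}^T\big\langle\psi_n u'(t),\ (\chi_n f/\psi_n)\,\chi_n H(t)\big\rangle\dt .
\]
Estimating the $n$-th term by Cauchy--Schwarz in $x$ and then in $t$, and bounding $\norm{(\chi_n f/\psi_n)\,\chi_n H}_{H^1_\kappa}\lesssim\kappa^{-\frac12}\norm{\chi_n f/\psi_n}_{H^1_\kappa}\norm{\chi_n H}_{H^1_\kappa}\lesssim\kappa^{-\frac12}\norm{\chi_n^\flat f}_{H^1_\kappa}\norm{\chi_n H}_{H^1_\kappa}$, with $\chi_n^\flat$ a fattened bump of bounded overlap, we obtain
\[
\Big|\int_{-T}^T\langle fu',H\rangle\dt\Big|\lesssim\kappa^{-\frac12}\sum_n\norm{\psi_n u'}_{L^2_tH^{-1}_\kappa}\,\norm{\chi_n^\flat f}_{H^1_\kappa}\,\norm{\chi_n H}_{L^2_tH^1_\kappa} .
\]

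The crucial step is the summation in $n$. One \emph{cannot} simply extract $\norm{\psi_n u'}_{L^2_tH^{-1}_\kappa}\leq\norm{u}_{\ls_\kappa}$ and sum the rest by the triangle inequality, since $\sum_n\norm{\chi_n^\flat f}_{H^1_\kappa}$ is not controlled by $\norm{f}_{H^1_\kappa}$ — only its $\ell^2$ analogue is. Instead, bounding $\norm{\psi_n u'}_{L^2_tH^{-1}_\kappa}\leq\norm{u}_{\ls_\kappa}$ for each $n$ and then applying Cauchy--Schwarz in $n$,
\[
\Big|\int_{-T}^T\langle fu',H\rangle\dt\Big|\lesssim\kappa^{-\frac12}\norm{u}_{\ls_\kappa}\Big(\sum_n\norm{\chi_n^\flat f}_{H^1_\kappa}^2\Big)^{\!\frac12}\Big(\sum_n\norm{\chi_n H}_{L^2_tH^1_\kappa}^2\Big)^{\!\frac12} .
\]
The two square-function bounds $\sum_n\norm{\chi_n^\flat f}_{H^1_\kappa}^2\lesssim\norm{f}_{H^1_\kappa}^2$ and $\sum_n\norm{\chi_n H(t)}_{H^1_\kappa}^2\lesssim\norm{H(t)}_{H^1_\kappa}^2$ (integrate the latter in $t$) are elementary: write $\norm{\cdot}_{H^1_\kappa}^2\approx\norm{\partial(\cdot)}_{L^2}^2+\kappa^2\norm{\cdot}_{L^2}^2$, apply the Leibniz rule, and use $\sum_n\chi_n^2\lesssim1$ and $\sum_n|\chi_n'|^2\lesssim1$ (likewise for $\chi_n^\flat$). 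Taking the supremum over $H$ yields~\eqref{LSk H1k}.

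For~\eqref{LSk H1k 2}, recall that $\sqrt{R_0}=\sqrt{R_0(\kappa)}$ is, uniformly for $\kappa\geq1$, an isomorphism of $H^{-1}_\kappa$ onto $L^2$ and of $L^2$ onto $H^1_\kappa$. Hence for $\phi,\psi\in L^2$ we may write $\langle\phi,\sqrt{R_0}\,u'\,\sqrt{R_0}\,\psi\rangle=\langle\sqrt{R_0}\phi,\ u'(\sqrt{R_0}\psi)\rangle$ and bound this by $\norm{\sqrt{R_0}\phi}_{H^1_\kappa}\norm{u'(\sqrt{R_0}\psi)}_{H^{-1}_\kappa}$; applying~\eqref{LSk H1k} with $f=\sqrt{R_0}\psi$ (so that $\norm{f}_{H^1_\kappa}\lesssim\norm{\psi}_{L^2}$) controls the right-hand factor in $L^2_t$ by $\kappa^{-\frac12}\norm{\psi}_{L^2}\norm{u}_{\ls_\kappa}$. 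Since this is uniform over the $L^2$-unit balls of $\phi$ and $\psi$, the stated bound for the operator norm follows.

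\textbf{Main obstacle.} The one genuinely delicate point is that $\norm{u}_{\ls_\kappa}$ provides only a \emph{supremum} over the translates $\psi_{x_0}u'$, with no summability in $x_0$; the contributions of $f$ (and of the dual variable) must therefore be organized in $\ell^2$, i.e.\ through a square function, and it is precisely this that forces the use of the $\kappa^{-\frac12}$-gaining product estimate in $H^{\pm1}_\kappa$ in place of the cruder $\norm{gh}_{H^{-1}_\kappa}\lesssim\norm{g}_{H^1}\norm{h}_{H^{-1}_\kappa}$ used elsewhere.
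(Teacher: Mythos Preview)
Your argument is correct and follows essentially the same route as the paper's proof: a duality argument, a spatial partition of unity $\sum_n\chi_n^2\equiv 1$, the algebra bound $\norm{gh}_{H^1_\kappa}\lesssim\kappa^{-1/2}\norm{g}_{H^1_\kappa}\norm{h}_{H^1_\kappa}$ to supply the $\kappa^{-1/2}$ gain, Cauchy--Schwarz in the spatial index $n$, and the square-function bounds $\sum_n\norm{\chi_n f}_{H^1_\kappa}^2\lesssim\norm{f}_{H^1_\kappa}^2$. The only cosmetic difference is that the paper inserts a fattened bump $\wt\chi_j$ (with $\wt\chi_j\equiv 1$ on $\operatorname{supp}\chi_j$) to pair with $u'$, whereas you divide and multiply by $\psi_n$ directly; these are equivalent since $\chi_n/\psi_n$ is uniformly bounded in $W^{1,\infty}$. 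Your deduction of~\eqref{LSk H1k 2} from~\eqref{LSk H1k} via the mapping properties of $\sqrt{R_0}$ also mirrors the paper's one-line argument.
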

\begin{proof}
We argue by duality.  Let $h\in L^2_tH^1_\kappa$.  Fix a smooth partition of unity
\begin{equation*}
\sum_{j\in\Z} \chi_j^2 \equiv 1 ,
\quad\tx{with}\quad
\chi_j \equiv 1 \tx{ on } [j,j+1]
\quad\tx{and}\quad
\operatorname{supp} \chi_j \subset [j-1,j+2] .
\end{equation*}
Then
\begin{equation*}
\langle h , fu' \rangle 
= \sum_{j\in\Z} \int \wt{\chi}_j^2 u'\cdot \chi_j^2 f\ol{h} \dx ,
\end{equation*}
where $\wt{\chi}_j^2 = \chi_{j-1}^2 + \chi_j^2 + \chi_{j+1}^2$ is a fattened bump function.  We estimate
\begin{align*}
\int_{-T}^T |\langle h , fu' \rangle|\dt 
&\lesssim \sum_j \snorm{\wt{\chi}_j^2 u'}_{L^2_tH^{-1}_\kappa} \norm{ \chi_j^2 f\ol{h} }_{L^2_tH^1_\kappa} \\
&
\lesssim \kappa^{-\frac12} \sum_j \norm{u}_{\ls_\kappa} \norm{ \chi_j f }_{H^1_\kappa} \norm{ \chi_j h }_{L^2_tH^1_\kappa}
\\
&\lesssim \kappa^{-\frac12} \norm{u}_{\ls_\kappa} \bigg( \sum_j \norm{\chi_j f}_{H^1_\kappa}^2 \bigg)^{\frac{1}{2}} \bigg( \sum_j \norm{\chi_j h}_{L^2_tH^{1}_\kappa}^2 \bigg)^{\frac{1}{2}} \\
&\lesssim \kappa^{-\frac12}\norm{u}_{\ls_\kappa} \norm{f}_{H^1_\kappa} \norm{h}_{L^2_tH^1_\kappa} .
\end{align*}
Taking a supremum over $\norm{h}_{L^2_tH^1_\kappa}\leq 1$, the estimate \eqref{LSk H1k} follows.

The second estimate \eqref{LSk H1k 2} now follows immediately:
\begin{equation*}
\bnorm{\sqrt{R_0} u' \sqrt{R_0}}_{L^2_t\op} \lesssim \kappa^{-\frac12} \bnorm{\sqrt{R_0}}_{H^{-1}_\kappa \to L^2} \norm{u}_{\ls_\kappa} \bnorm{\sqrt{R_0}}_{L^2\to H^1_\kappa}  
\lesssim \kappa^{-\frac12}\norm{ u}_{\ls_\kappa} . \qedhere
\end{equation*}
\end{proof}

Our next lemma provides an estimate for the diagonal Green's function analogous to \eqref{g est} that captures the gain of regularity we expect from our local smoothing norm.

\begin{lemma}
 There exists a constant $C>0$ so that for any
Schwartz function $\phi(x)$, we have
\begin{equation}
\norm{\phi g'}_{L^2_tH^1_\kappa} \lesssim_\phi \kappa^{-1} \norm{u}_{\ls_\kappa}
\label{LSk g}
\end{equation}
uniformly for 
\begin{equation}
\kappa \geq 1 + C\norm{u}_{L^\infty_tH^{-1}_\kappa}^2 .
\label{k0}
\end{equation}
\end{lemma}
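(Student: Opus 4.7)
The plan is to decompose $g$ into its linearization at $u=0$ plus a higher-order remainder and estimate each part separately. Using the series expansion \eqref{g}, write $g = \tfrac{1}{2\kappa} + h_1 + r$ with $h_1 = -\kappa^{-1} R_0(2\kappa) u$ and $r = \sum_{\ell \geq 2} h_\ell$. Since $\partial_x$ commutes with $R_0$, we have $h_1' = -\kappa^{-1} R_0(2\kappa) u'$, which is linear in $u'$, while $r'$ is at least quadratic in $u$.

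For $\phi h_1'$, I exploit the weight $\psi(x) = \operatorname{sech}(x/6)$ from the definition of $\|\cdot\|_{\ls_\kappa}$ via the factorization
\begin{equation*}
\phi R_0(2\kappa) u' = (\phi \psi^{-1}) \cdot \bigl[\psi R_0(2\kappa) \psi^{-1}\bigr] \cdot (\psi u').
\end{equation*}
Because $\phi$ is Schwartz, $\phi\psi^{-1} = \phi\cosh(x/6)$ remains in $W^{1,\infty}$; the middle factor is bounded $H^{-1}_\kappa \to H^1_\kappa$ uniformly in $\kappa$ by \eqref{comm}. Combining these gives $\|\phi R_0(2\kappa) u'\|_{H^1_\kappa} \lesssim_\phi \|\psi u'\|_{H^{-1}_\kappa}$, and taking $L^2_t$ norms yields $\|\phi h_1'\|_{L^2_t H^1_\kappa} \lesssim_\phi \kappa^{-1} \|u\|_{\ls_\kappa}$ by definition of the local smoothing norm.

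For $\phi r'$, I dualize against $v\in L^2_t H^{-1}_\kappa$; because $[\partial_x, R_0]=0$, differentiating $h_\ell(x) = (-1)^\ell \langle \delta_x, (R_0u)^\ell R_0 \delta_x\rangle$ produces $\ell$ contributions of the form
\begin{equation*}
\int \tr\bigl( (v\phi) \cdot (R_0 u)^k R_0 u' (R_0 u)^{\ell-1-k} R_0 \bigr)\dt, \qquad 0\leq k \leq \ell-1,
\end{equation*}
with $v\phi$ acting as a multiplication operator. Splitting each $R_0 = \sqrt{R_0}\sqrt{R_0}$ and cycling the trace, the operator inside regroups as a product of exactly $\ell+1$ symmetric \emph{sandwich} blocks: one $[\sqrt{R_0} u' \sqrt{R_0}]$, one $[\sqrt{R_0} (v\phi) \sqrt{R_0}]$, and $\ell-1$ copies of $[\sqrt{R_0} u \sqrt{R_0}]$ (the $\ell+1$ full $R_0$ factors supply exactly the $2(\ell+1)$ square roots required, leaving no naked $\sqrt{R_0}$'s behind). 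Bounding the $u'$ block by the operator-norm estimate \eqref{LSk H1k 2}, the $v\phi$ and one $u$ block by the Hilbert--Schmidt estimate \eqref{HS}, and the remaining $u$ blocks by their operator norms $\leq \kappa^{-1/2}\|u\|_{H^{-1}_\kappa}$, an application of Cauchy--Schwarz in $t$ produces a contribution of size
\begin{equation*}
\kappa^{-1/2} \|u\|_{\ls_\kappa} \cdot \bigl(\kappa^{-1/2} \|u\|_{L^\infty_t H^{-1}_\kappa}\bigr)^{\ell-1} \cdot \kappa^{-1/2} \|\phi\|_{H^1} \|v\|_{L^2_t H^{-1}_\kappa}.
\end{equation*}
Under \eqref{k0} we have $\kappa^{-1/2}\|u\|_{L^\infty_t H^{-1}_\kappa} \leq 1/2$ (so the sum over $\ell$ converges geometrically, with leading term $\ell = 2$) and $\|u\|_{L^\infty_t H^{-1}_\kappa} \lesssim \kappa^{1/2}$; together these yield $\|\phi r'\|_{L^2_t H^1_\kappa} \lesssim_\phi \kappa^{-1} \|u\|_{\ls_\kappa}$, matching the bound on $\phi h_1'$.

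The main technical obstacle is the insistence on the operator-norm bound \eqref{LSk H1k 2} (rather than Hilbert--Schmidt) for the $\sqrt{R_0} u' \sqrt{R_0}$ factor: a Hilbert--Schmidt bound would force the estimate to involve $\|u'\|_{H^{-1}_\kappa}$, one full derivative beyond what is controlled for $H^{-1}$ data, and the local-smoothing operator estimate is precisely what lets $u'$ appear inside the trace without paying that cost. Tracking $\kappa$-powers is also delicate, since the target $\kappa^{-1}$ emerges only through the interplay between the $\kappa^{-1/2}$ sandwich factors and the $\kappa^{1/2}$ growth of $\|u\|_{H^{-1}_\kappa}$ permitted by \eqref{k0}.
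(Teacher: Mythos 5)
Your handling of the higher-order remainder $r' = \sum_{\ell\geq 2} h_\ell'$ is essentially the paper's argument: you distribute the derivative across the $\ell$ copies of $u$, cycle and symmetrize into $\ell+1$ sandwich blocks, estimate the single $u'$ block in operator norm via \eqref{LSk H1k 2}, the localized test-function block and one $u$ block via \eqref{HS}, and the remaining $u$ blocks in operator norm; the $\kappa$-bookkeeping $\kappa^{-(\ell+1)/2}\|u\|_{H^{-1}_\kappa}^{\ell-1} = \kappa^{-1}(\kappa^{-1/2}\|u\|_{H^{-1}_\kappa})^{\ell-1}$ and the geometric summation under \eqref{k0} are correct and match the paper's.

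However, your treatment of the linear piece $\phi h_1'$ has a genuine gap. You factor
\begin{equation*}
\phi R_0(2\kappa) u' = (\phi\psi^{-1})\cdot\bigl[\psi R_0(2\kappa)\psi^{-1}\bigr]\cdot(\psi u')
\end{equation*}
and assert that $\phi\psi^{-1} = \phi\cosh(x/6)\in W^{1,\infty}$ because $\phi$ is Schwartz. This is false: Schwartz decay is faster than any polynomial, but $\cosh(x/6)$ grows exponentially. For instance $\phi(x) = e^{-(1+x^2)^{1/4}}$ is Schwartz, yet $\phi(x)\cosh(x/6)\sim\tfrac12 e^{|x|/6 - |x|^{1/2}}\to\infty$. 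So the multiplier $\phi\psi^{-1}$ need not be bounded, and the factorization does not close. The paper sidesteps this by first invoking the continuous partition of unity $\phi(x) = c^{-1}\int \phi(x)\psi_z^2(x)\,\dd z$ (exactly as in the proof of \eqref{LSk 3 new}) to reduce the whole lemma to the single case $\phi=\psi$, for which $\phi\psi^{-1}\equiv 1$ and \eqref{comm} applies directly; the translated bumps $\psi_z$ are then uniformly controlled because $\|\cdot\|_{\ls_\kappa}$ is a supremum over translates. You should insert this reduction at the outset (it is needed for the full generality "any Schwartz $\phi$" of the statement), or else weaken the hypothesis on $\phi$ to a decay condition such as $\phi, \phi' = O(e^{-|x|/6})$.
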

\begin{proof}
Using a `continuous partition of unity' as in the proof of \eqref{LSk 3 new}, it suffices to prove \eqref{LSk g} in the special case where $\phi = \psi$.  Again, we will argue by duality.  Given $f\in L^2_tH^{-1}_\kappa$, the expansion \eqref{g} for $g$ yields
\begin{equation*}
\int_{-T}^T \bigg| \int f\psi g' \dx\bigg| \dt
\leq \sum_{\ell\geq 1} \int_{-T}^T \big| \tr\big\{ f\psi [\partial, (R_0u)^\ell R_0] \big\} \big| \dt .
\end{equation*}

When $\ell=1$, there is only one copy of $u$ on which the derivative $\partial$ may fall.  In this case, we write
\begin{equation*}
f \psi R_0 u' = f [\psi R_0 \tfrac{1}{\psi}] \psi u'
\end{equation*}
and note that the operator in square brackets is bounded $H^{-1}_\kappa \to H^1_\kappa$ by \eqref{comm}.  Therefore
\begin{equation*}
\big| \tr\big\{ f\psi R_0u' R_0 \big\} \big|
\lesssim \bnorm{ \sqrt{R_0} f\sqrt{R_0} }_{\hs} \bnorm{\sqrt{R_0} \psi u' \sqrt{R_0} }_{\hs}
\lesssim \kappa^{-1} \norm{f}_{H^{-1}_\kappa} \norm{ \psi u' }_{H^{-1}_\kappa} ,
\end{equation*}
and thus
\begin{equation*}
\int_{-T}^T  \big| \tr\big\{ f\psi R_0u' R_0 \big\} \big|
\dt
\lesssim \kappa^{-1}\norm{f}_{L^2_tH^{-1}_\kappa} \norm{u}_{\ls_\kappa} .
\end{equation*}

Next, we turn to the case where $\ell\geq 2$.  We distribute the derivative $\partial$ using the product rule $[\partial, AB] = [\partial,A]B + A [\partial, B]$.  This produces one copy of $u'$, and we estimate its contribution in operator norm using~\eqref{LSk H1k 2}.  The requirement $\ell\geq 2$ then guarantees that there is at least one other copy of $u$, which together with $f$ provides us with two terms that can be put in Hilbert--Schmidt norm:
\begin{align*}
&\sum_{\ell\geq 2} \int_{-T}^T \big| \tr\big\{ f\psi [\partial, (R_0u)^\ell R_0] \big\} \big| \dt \\
&\qquad \leq \sum_{\ell\geq 2} \ell \bnorm{ \sqrt{R_0} f\psi \sqrt{R_0} }_{L^2_t\hs} \bnorm{ \sqrt{R_0} u' \sqrt{R_0} }_{L^2_t\op} \bnorm{ \sqrt{R_0} u \sqrt{R_0} }_{L^\infty_t\hs}^{\ell-1} \\
&\qquad\lesssim \sum_{\ell\geq 2} \ell \kappa^{-1} \norm{ f }_{L^2_tH^{-1}_\kappa} \norm{ u}_{\ls_\kappa} \big( \kappa^{-\frac{1}{2}} \norm{u}_{L^\infty_tH^{-1}_\kappa} \big)^{\ell-1} \\
&\qquad\lesssim \kappa^{-1} \norm{ f }_{L^2_tH^{-1}_\kappa} \norm{u}_{\ls_\kappa} .
\end{align*}

Altogether, we conclude
\begin{equation*}
\int_{-T}^T \bigg| \int f\psi g' \dx\bigg| \dt
\lesssim \kappa^{-1} \norm{ f }_{L^2_tH^{-1}_\kappa} \norm{u}_{\ls_\kappa}.
\end{equation*}
Taking a supremum over $\norm{f}_{L^2_tH^{-1}_\kappa} \leq 1$, the estimate \eqref{LSk g} follows.
\end{proof}

The next few results provide estimates for the terms $h_\ell$ appearing in the series \eqref{g} for $g$.  We will start with $h_1$, which follows easily from \eqref{LSk 4 new}--\eqref{LSk 2 new}.
\begin{lemma}
We have
\begin{align}
\norm{\psi h_1}_{L^2_{t,x}} &\lesssim \kappa^{-2}  \norm{u}_{L^2_tH^{-1}_\kappa}  , 
\label{h1 1} \\
\norm{\psi h'_1}_{L^2_{t,x}} &\lesssim \kappa^{-2} \norm{u}_{\ls_\kappa} ,
\label{h1 2} \\
\norm{\psi h''_1}_{L^2_{t,x}} &\lesssim \kappa^{-1} \norm{u}_{\ls_\kappa} + \kappa^{-2} \norm{u}_{L^2_tH^{-1}_\kappa} 
\label{h1 3}
\end{align}
uniformly for $\kappa\geq 1$ and $T>0$.
\end{lemma}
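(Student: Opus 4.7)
The plan is to work directly from the identity $h_1 = -\kappa^{-1} R_0(2\kappa) u$, leveraging two key tools: the conjugation estimate $\|\psi R_0(2\kappa) \psi^{-1}\|_{H^{-1}_\kappa \to H^1_\kappa} \lesssim 1$ from \eqref{comm} (which applies because $\psi = \operatorname{sech}(x/6)$ satisfies the hypotheses \eqref{w} and because $H^{\pm 1}_{2\kappa}$ is uniformly equivalent to $H^{\pm 1}_\kappa$ for $\kappa\geq 1$), together with the elementary Fourier-side embeddings $\|f\|_{L^2} \leq (2\kappa)^{-1}\|f\|_{H^1_\kappa}$ and $\|\partial f\|_{L^2} \leq \|f\|_{H^1_\kappa}$.

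For \eqref{h1 1}, I would factor $\psi h_1 = -\kappa^{-1}[\psi R_0(2\kappa) \psi^{-1}](\psi u)$, apply the conjugation bound, and invoke the algebra estimate $\|\psi u\|_{H^{-1}_\kappa} \lesssim \|u\|_{H^{-1}_\kappa}$ already recorded in the preliminaries, arriving at $\|\psi h_1\|_{H^1_\kappa} \lesssim \kappa^{-1}\|u\|_{H^{-1}_\kappa}$; the $L^2$--$H^1_\kappa$ embedding then provides one more factor of $\kappa^{-1}$ before integrating in time. Since $R_0(2\kappa)$ commutes with $\partial$, we have $h_1' = -\kappa^{-1} R_0(2\kappa) u'$, and repeating this argument with $u'$ in place of $u$ and invoking $\|\psi u'\|_{L^2_t H^{-1}_\kappa} \leq \|u\|_{\ls_\kappa}$ in place of the algebra estimate yields \eqref{h1 2}.

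For \eqref{h1 3}, I would use the Leibnitz identity $\psi h_1'' = \partial(\psi h_1') - \psi' h_1'$. The second term is controlled by the proof of \eqref{h1 2} applied with $\psi'$ in place of $\psi$ (since $|\psi'|\lesssim\psi$ pointwise and $\psi'$ satisfies the same hypotheses required by \eqref{comm}), producing $\|\psi' h_1'\|_{L^2_{t,x}} \lesssim \kappa^{-2}\|u\|_{\ls_\kappa}$. For the first term, $\|\partial(\psi h_1')\|_{L^2_{t,x}} \leq \|\psi h_1'\|_{L^2_t H^1_\kappa}$, and the same commutator step used above gives $\|\psi h_1'\|_{L^2_t H^1_\kappa} \lesssim \kappa^{-1}\|\psi u'\|_{L^2_t H^{-1}_\kappa} \leq \kappa^{-1}\|u\|_{\ls_\kappa}$, now absorbing the extra derivative into the $H^1_\kappa$-norm rather than into $L^2$. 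The proof is essentially bookkeeping; the only minor subtlety is the transfer of the conjugation bound \eqref{comm} from $R_0(\kappa)$ to $R_0(2\kappa)$, which is immediate from the equivalence of $H^s_{2\kappa}$ and $H^s_\kappa$ for $\kappa\geq 1$.
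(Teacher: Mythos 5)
Your proposal is correct and follows essentially the same route as the paper: factor out the conjugated resolvent $\psi R_0(2\kappa)\psi^{-1}$, apply the commutator bound \eqref{comm}, and track the Sobolev embeddings, which is precisely what the paper does (phrased via \eqref{LSk 4 new} and \eqref{LSk 2 new}). The one small imprecision is the parenthetical claim that ``$\psi'$ satisfies the same hypotheses required by \eqref{comm}'' --- it does not, since $\psi'$ is not positive and vanishes at the origin --- but the pointwise bound $|\psi'|\lesssim\psi$ alone suffices to reduce $\norm{\psi' h_1'}_{L^2_{t,x}}$ to the already-established bound on $\norm{\psi h_1'}_{L^2_{t,x}}$, so the conclusion stands.
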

\begin{proof}
Recall that $h_1 = -\kappa^{-1}R_0(2\kappa) u$.  Using \eqref{comm}, we see that
\begin{equation*}
\norm{ \psi h'_1 }_{L^2_{t,x}}
\lesssim \kappa^{-1} \norm{\psi h'_1}_{L^2_tH^1_\kappa}
= \kappa^{-2} \bnorm{ \psi R_0(2\kappa)\tfrac{1}{\psi} \cdot \psi u' }_{L^2_tH^1_\kappa}
\lesssim \kappa^{-2} \norm{u}_{\ls_\kappa} .
\end{equation*}
This proves \eqref{h1 2}.  For \eqref{h1 1} and \eqref{h1 3}, we also use \eqref{LSk 2 new} and \eqref{LSk 4 new}:
\begin{gather*}
\norm{\psi h_1}_{L^2_{t,x}}
\lesssim \kappa^{-1} \norm{\psi u}_{L^2_tH^{-2}_\kappa}
\lesssim \kappa^{-2}  \norm{u}_{L^2_tH^{-1}_\kappa} , \\
\norm{\psi h''_1}_{L^2_{t,x}}
\lesssim \kappa^{-1} \norm{\psi u''}_{L^2_tH^{-2}_\kappa}
\lesssim \kappa^{-1}  \big( \norm{u}_{\ls_\kappa} + \kappa^{-1} \norm{u}_{L^2_tH^{-1}_\kappa} \big)  .
\qedhere
\end{gather*}
\end{proof}

The following lemma will be useful in estimating all of the cubic and higher order terms of $j$ (see \eqref{j3 3} for details).  The proof is rather involved, because we need to be efficient in our estimation.  Specifically, it will be important that the factor $\kappa^{-\frac{11}2}$ on the RHS of \eqref{h3} is $o(\kappa^{-5})$.  (Otherwise, we would not be able to start at $\ell=3$ in \eqref{j3 3}.)
\begin{lemma}
Given a Schwartz function $\phi(x)$, we have
\begin{equation}
\sum_{\ell\geq 3} \norm{\phi h_\ell}_{L^1_{t,x}} \lesssim  (T \kappa^2)^{\frac34} \kappa^{-\frac{11}2}  \norm{u}_{L^\infty_tH^{-1}_\kappa} \big( \norm{u}_{L^\infty_tH^{-1}_\kappa}^2 + \norm{u}_{\ls_\kappa}^2 \big) 
\label{h3}
\end{equation}
uniformly for $T \leq \kappa^{-2}$ and $\kappa$ satisfying~\eqref{k0}. 
\end{lemma}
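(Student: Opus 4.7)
The plan is to dualize $\norm{\phi h_\ell}_{L^1_x}$ against $L^\infty_x$, rewrite the resulting pairing as a trace using the series formula \eqref{g} for $h_\ell$, factor $R_0=\sqrt{R_0}\sqrt{R_0}$, and apply cyclicity so as to isolate the operator $A:=\sqrt{R_0}\,u\,\sqrt{R_0}$ controlled by \eqref{HS}. The Hilbert--Schmidt identity \eqref{HS} alone will suffice, and only in the very last step does one invoke the hypothesis $T\le\kappa^{-2}$ to convert a linear factor of $T$ into the sharper prefactor $(T\kappa^2)^{3/4}$.

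\textbf{Main computation.} For each fixed $t$, dualizing and cycling the trace,
\[
\norm{\phi h_\ell(t)}_{L^1_x} = \sup_{\norm{\chi}_{L^\infty}\le 1}\bigl|\tr\bigl\{\sqrt{R_0}\,\chi\phi\,\sqrt{R_0}\cdot A^\ell\bigr\}\bigr|.
\]
Since $\norm{\sqrt{R_0}}_{L^2\to L^2}=1/\kappa$ and $|\chi|\le 1$, the outer factor obeys $\bnorm{\sqrt{R_0}\,\chi\phi\,\sqrt{R_0}}_{\op}\le \kappa^{-2}\norm{\phi}_{L^\infty}$. Combining this with the trace inequality $|\tr\{BD\}|\le\norm{B}_{\op}\norm{D}_{\tc}$ and the factorization (valid for $\ell\ge 2$)
\[
\norm{A^\ell}_{\tc}\le \norm{A}_{\hs}^2\,\norm{A}_{\op}^{\ell-2},
\]
together with $\norm{A(t)}_{\op}\le\norm{A(t)}_{\hs}=\kappa^{-1/2}\norm{u(t)}_{H^{-1}_\kappa}$ from \eqref{HS}, I would obtain the pointwise-in-$t$ bound
\[
\norm{\phi h_\ell(t)}_{L^1_x}\le \kappa^{-3}\norm{\phi}_{L^\infty}\,\norm{u(t)}_{H^{-1}_\kappa}^2\,\bigl(\kappa^{-1/2}\norm{u(t)}_{H^{-1}_\kappa}\bigr)^{\ell-2}.
\]

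\textbf{Integration and summation.} Integrating over $t\in[-T,T]$, pulling $\norm{A}_{\op}^{\ell-2}$ out in $L^\infty_t$, and summing the geometric series in $\ell\ge 3$ (which converges by \eqref{k0}, after choosing $C$ large enough that $\kappa^{-1/2}\norm{u}_{L^\infty_tH^{-1}_\kappa}\le\tfrac12$) yields
\[
\sum_{\ell\ge 3}\norm{\phi h_\ell}_{L^1_{t,x}}\lesssim T\,\kappa^{-7/2}\norm{\phi}_{L^\infty}\,\norm{u}_{L^\infty_tH^{-1}_\kappa}^3.
\]
Finally, $T\le\kappa^{-2}$ gives $T\kappa^{-7/2}=(T\kappa^2)^{3/4}\kappa^{-11/2}(T\kappa^2)^{1/4}\le (T\kappa^2)^{3/4}\kappa^{-11/2}$, and the trivial bound $\norm{u}_{L^\infty_tH^{-1}_\kappa}^3\le\norm{u}_{L^\infty_tH^{-1}_\kappa}\bigl(\norm{u}_{L^\infty_tH^{-1}_\kappa}^2+\norm{u}_{\ls_\kappa}^2\bigr)$ produces \eqref{h3}.

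\textbf{Main obstacle.} The proof is essentially a bookkeeping exercise, with the only delicate point being the final trade-off $T^{1/4}\le \kappa^{-1/2}$ needed to recover the stated prefactor $(T\kappa^2)^{3/4}\kappa^{-11/2}$. It is worth noting that the local smoothing norm $\norm{u}_{\ls_\kappa}$ on the right-hand side of \eqref{h3} is not really needed here; its appearance is presumably for cosmetic uniformity with the other estimates the authors will employ for the density $\rho$ and current $j$.
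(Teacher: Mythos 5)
Your proof is correct, and it is a genuinely more elementary route than the paper's. You dualize $\norm{\phi h_\ell(t)}_{L^1_x}$ against $L^\infty$, cycle the trace, and rely only on $\norm{\sqrt{R_0}\chi\phi\sqrt{R_0}}_{\op}\le\kappa^{-2}\norm{\phi}_{L^\infty}$, the identity \eqref{HS}, and the trivial chain $\norm{A^\ell}_{\tc}\le\norm{A}_{\hs}^2\norm{A}_{\op}^{\ell-2}\le\norm{A}_{\hs}^\ell$. Integrating in $t$ and summing the geometric series (convergent by \eqref{k0}) gives $T\kappa^{-7/2}\norm{u}_{L^\infty_tH^{-1}_\kappa}^3$, and since $T\kappa^{-7/2}=(T\kappa^2)\kappa^{-11/2}\le(T\kappa^2)^{3/4}\kappa^{-11/2}$ under $T\le\kappa^{-2}$, together with the trivial domination $\norm{u}_{L^\infty_tH^{-1}_\kappa}^3\le\norm{u}_{L^\infty_tH^{-1}_\kappa}(\norm{u}_{L^\infty_tH^{-1}_\kappa}^2+\norm{u}_{\ls_\kappa}^2)$, this is exactly \eqref{h3} — in fact your intermediate bound $(T\kappa^2)\kappa^{-11/2}$ is slightly sharper than the stated $(T\kappa^2)^{3/4}\kappa^{-11/2}$.

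The paper's proof is substantially heavier: it does a continuous partition of unity to reduce to $\phi=\psi^3$, commutes the weight past each $R_0$ via \eqref{comm}, replaces the crude $\norm{A}_{\op}\le\norm{A}_{\hs}$ with the interpolated operator bound \eqref{infty-better} (producing $\norm{w}_{H^{-1}_\kappa}^{1/2}\norm{w'}_{H^{-2}_\kappa}^{1/2}$), applies Young's inequality with a free parameter $c$, uses \eqref{LSk 3 new} to convert $\norm{(\psi u)'}_{L^2_tH^{-2}_\kappa}$ into $\kappa^{-1}\norm{u}_{\ls_\kappa}$, and finally optimizes $c=T^{1/4}\kappa^{1/2}$. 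That machinery is what produces the $\norm{u}_{\ls_\kappa}^2$ term and the $(T\kappa^2)^{3/4}$ exponent on their side; you get the same (indeed a slightly stronger) conclusion without any of it. Your observation that the local-smoothing dependence in the statement of \eqref{h3} is not actually needed is correct; the paper appears to state the bound in this weaker mixed form for uniformity with the adjacent lemmas \eqref{h2} and \eqref{h1h2}, where local smoothing genuinely enters.
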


\begin{proof}
Using a continuous partition of unity argument as in the proof of \eqref{LSk 3 new}, it suffices to prove \eqref{h3} in the special case when $\phi = \psi^3$.
Arguing by duality, consider $f\in L^\infty([-T,T]\times\R)$ and write
\begin{equation}
\int_{-T}^T \int_{-\infty}^\infty f\psi^3 h_\ell\dx\dt
= (-1)^\ell \int_{-T}^T \tr\big\{ \sqrt{R_0} f\psi^3 (R_0u)^\ell \sqrt{R_0} \big\} \dt .
\label{hl}
\end{equation}

Let us begin with the $\ell = 3$ term, and expand
\begin{equation*}
\sqrt{R_0} f \psi^3 (R_0u)^3 \sqrt{R_0}  =  \sqrt{R_0} f [\psi^3R_0\tfrac{1}{\psi^3}] \psi u [\psi^2R_0\tfrac{1}{\psi^2}] \psi u [\psi R_0\tfrac{1}{\psi}]\psi u \sqrt{R_0}.
\end{equation*}
By \eqref{comm}, each operator in square brackets above is bounded $H^{-1}_\kappa\to H^1_\kappa$.  This allows us to write
\begin{equation*}
\sqrt{R_0} f\psi^3 (R_0u)^3 \sqrt{R_0} = (\sqrt{R_0} f \sqrt{R_0})A_3(\sqrt{R_0} \psi u \sqrt{R_0}) A_2(\sqrt{R_0} \psi u \sqrt{R_0}) A_1(\sqrt{R_0}\psi u \sqrt{R_0})
\end{equation*}
for operators $A_j$ with $\norm{A_j}_{\op} \lesssim 1$.  We seek to estimate the $\tc$-norm of this entire operator, for which it suffices to bound two of the factors in round brackets in $\op$ and two in $\hs$. The factor containing $f$ is directly estimated in $\op$ by 
\begin{equation}\label{op-bd}
\| \sqrt{R_0} f \sqrt{R_0}\|_{\op} 
\lesssim \kappa^{-2} \|f\|_{L^\infty}.
\end{equation}
It remains to consider the three factors that contain $u$.  Two of these factors will be estimated in the $\hs$-norm, using \eqref{HS}:
\begin{equation}\label{hs-bd}
\bnorm{ \sqrt{R_0} w \sqrt{R_0} }_{\hs} 
= \kappa^{-\frac12} \snorm{w}_{H^{-1}_\kappa}.
\end{equation}
The remaining factor will be bounded in the $\op$-norm. Employing
\eqref{op-bd} for low frequencies and \eqref{hs-bd} for high frequencies, we find
\[
\bnorm{ \sqrt{R_0} w \sqrt{R_0} }_{\op} 
\lesssim \kappa^{-\frac12} \snorm{w_{>\kappa}}_{H^{-1}_\kappa}
+ \kappa^{-2} \| w_{<\kappa}\|_{L^\infty} \lesssim  \kappa^{-\frac12} \snorm{w_{>\kappa}}_{H^{-1}_\kappa}
+ \kappa^{-2} \| w_{<\kappa}\|_{L^2}^\frac12 \| w'_{<\kappa} \|^\frac12_{L^2},
\]
which can be shortened to
\begin{equation}
\label{infty-better}
\bnorm{ \sqrt{R_0} w \sqrt{R_0} }_{\op} 
\lesssim  \kappa^{-\frac12} \snorm{w}_{H^{-1}_\kappa}^{\frac12} \snorm{w'}_{H^{-2}_\kappa}^{\frac12} .
\end{equation}
Applying these bounds to $h = \psi u$, we obtain
\[
\begin{aligned}
\bnorm{ \sqrt{R_0}f\psi^3 (R_0u)^3 \sqrt{R_0} }_{\tc} \lesssim & \
\kappa^{-2-\frac32} \snorm{h}_{H^{-1}_\kappa}^\frac52 
 \snorm{h'}_{H^{-2}_\kappa}^\frac12 
  \|f\|_{L^\infty}
 \\
 \lesssim  & \ \kappa^{-2-\frac32} \snorm{h}_{H^{-1}_\kappa}
 \big( 
c^3 \snorm{h'}_{H^{-2}_\kappa}^2 
 + c^{-1} \snorm{h}_{H^{-1}_\kappa}^2 \big)  \|f\|_{L^\infty}
 \end{aligned}
\]
for an arbitrary constant $c>0$ to be chosen later.
Integrating in time, gives
\[
\bnorm{ \sqrt{R_0}f\psi^3 (R_0u)^3 \sqrt{R_0} }_{L^1_t\tc} \lesssim \kappa^{-2-\frac32}
\snorm{h}_{L^\infty_t H^{-1}_\kappa}
 \big( 
c^3 \snorm{h'}_{L^2_t H^{-2}_\kappa}^2 
 + c^{-1} \snorm{h}_{L^2_t H^{-1}_\kappa}^2 \big) \|f\|_{L^\infty_{t,x}} ,
\]
from which \eqref{LSk 3 new} yields
\begin{equation*}
\begin{aligned}
&\bnorm{ \sqrt{R_0}f\psi^3 (R_0u)^3 \sqrt{R_0} }_{L^1_t\tc} \\
&\qquad\lesssim  
\kappa^{-2-\frac32} \snorm{u}_{L^\infty_t H^{-1}_\kappa}
 \big( 
 c^3 \kappa^{-2} \snorm{u}_{\ls_\kappa}^2 
 + (c^{-1}+c^3 \kappa^{-2}) \snorm{u}_{L^2_t H^{-1}_\kappa}^2 \big)  \|f\|_{L^\infty_{t,x}}
 \\ 
&\qquad \lesssim  
 \kappa^{-4-\frac32} \snorm{u}_{L^\infty_t H^{-1}_\kappa}
 \big( c^3
 \snorm{u}_{\ls_\kappa}^2 
 + T (c^{-1}\kappa^2+c^3) \snorm{u}_{L^\infty_t H^{-1}_\kappa}^2 \big) \|f\|_{L^\infty_{t,x}}.
\end{aligned}
\end{equation*}
Choosing
\begin{equation}\label{which-c}
c=T^{\frac14}\kappa^{\frac12}
\end{equation}
to balance the constants, we conclude
\begin{equation}\label{h3 2}
\begin{aligned}
\bnorm{ \sqrt{R_0}f\psi^3 (R_0u)^3 \sqrt{R_0} }_{L^1_t\tc} \lesssim & \ 
\kappa^{-\frac{11}2} (T\kappa^2)^{\frac34} \snorm{u}_{L^\infty_t H^{-1}_\kappa}
\big( \norm{u}_{L^\infty_tH^{-1}_\kappa}^2 + \norm{u}_{\ls_\kappa}^2 \big)  \|f\|_{L^\infty_{t,x}} .
\end{aligned}
\end{equation}
This suffices for the $\ell = 3$ term of~\eqref{h3}.

\bigskip

Next, we turn to the terms with $\ell\geq 4$.  Using \eqref{h3 2}, we estimate
\begin{align*}
&\bnorm{ \sqrt{R_0}f\psi^3 (R_0u)^\ell \sqrt{R_0} }_{L^1_t\tc} \\
&\qquad \leq \bnorm{ \sqrt{R_0} f\psi^3 (R_0u)^3 \sqrt{R_0} }_{L^1_t\tc} \bnorm{\sqrt{R_0}u\sqrt{R_0}}_{L^\infty_{t}\hs}^{\ell - 3} \\
&\qquad \lesssim \kappa^{-\frac{11}2} (T\kappa^2)^{\frac34}\norm{f}_{L^\infty_{t,x}} \norm{u}_{L^\infty_tH^{-1}_\kappa} \big( \norm{u}_{L^\infty_tH^{-1}_\kappa}^2 + \norm{u}_{\ls_\kappa}^2 \big) \big( \kappa^{-\frac{1}{2}} \norm{u}_{L^\infty_tH^{-1}_\kappa} \big)^{\ell - 3} .
\end{align*}
Choosing $C$ larger if necessary, the condition \eqref{k0} implies
\[
\kappa^{-\frac{1}{2}} \norm{u}_{L^\infty_tH^{-1}_\kappa} \leq \tfrac12.
\]
This yields a convergent geometric series that can be summed for $\ell\geq 3$.  Taking the supremum over $\norm{f}_{L^\infty_{t,x}} \leq 1$, the estimate \eqref{h3} follows.
\end{proof}

Using a similar argument, we also obtain estimates in $L^2_tH^1_\kappa$ (instead of $L^1_{t,x}$) that require only one copy of the local smoothing norm:
\begin{lemma}
Given a Schwartz function $\phi(x)$, we have
\begin{equation}
\begin{aligned}
\bnorm{ \phi ( g - \tfrac{1}{2\kappa} - h_1 ) }_{L^2_tH^1_\kappa}
&\leq \sum_{\ell \geq 2} \norm{\phi h_\ell}_{L^2_tH^1_\kappa} \\
& \lesssim \kappa^{-\frac52} (T\kappa^2)^\frac14   \norm{u}_{L^\infty_tH^{-1}_\kappa} \big( \norm{u}_{L^\infty_tH^{-1}_\kappa} + \norm{u}_{\ls_\kappa} \big)
\end{aligned}
\label{h2}
\end{equation}
uniformly for $T\leq\kappa^{-2}$ and $\kappa$ satisfying \eqref{k0}. 
\end{lemma}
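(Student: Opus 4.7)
The first inequality is the triangle inequality applied to the expansion $g = \tfrac{1}{2\kappa} + h_1 + \sum_{\ell\geq 2}h_\ell$ from~\eqref{g}.  For the main bound, the plan is to mimic the proof of~\eqref{h3}, the only substantive change being that the $f$ factor now dualizes against $L^2_tH^{-1}_\kappa$ and so must be placed in Hilbert--Schmidt rather than operator norm.

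First, reduce to $\phi=\psi^3$ via a continuous partition of unity as in the proof of~\eqref{LSk 3 new}.  Arguing by duality against $f\in L^2_tH^{-1}_\kappa$, one is left to bound $\bigl|\int_{-T}^T\tr\{f\psi^3(R_0u)^\ell R_0\}\dt\bigr|$.  For $\ell\geq 3$ borrow the commutator decomposition from the proof of~\eqref{h3},
\begin{equation*}
\sqrt{R_0}\, f\psi^3(R_0u)^\ell\sqrt{R_0} = (\sqrt{R_0} f\sqrt{R_0})\, A_\ell\,(\sqrt{R_0}\psi u\sqrt{R_0})\cdots A_1\,(\sqrt{R_0}\psi u\sqrt{R_0}),
\end{equation*}
where the $A_j$ absorb the conjugators $[\psi^j R_0 \psi^{-j}]$ and are uniformly bounded on $L^2$ by~\eqref{comm} and the $\sqrt{R_0}:H^s_\kappa\to H^{s+1}_\kappa$ isomorphism.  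For $\ell=2$ the analogous decomposition carries a trailing factor $R_0$ whose operator norm is $\lesssim \kappa^{-2}$, which only strengthens the bound.

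Place $(\sqrt{R_0} f\sqrt{R_0})$ and all but one of the $(\sqrt{R_0}\psi u\sqrt{R_0})$ factors in Hilbert--Schmidt via~\eqref{HS}, and the remaining $\psi u$ factor in operator norm via the interpolation bound~\eqref{infty-better}, yielding
\begin{equation*}
\bnorm{\sqrt{R_0} f\psi^3(R_0u)^\ell\sqrt{R_0}}_{\tc} \lesssim \kappa^{-\frac{\ell+1}{2}}\norm{f}_{H^{-1}_\kappa}\norm{\psi u}_{H^{-1}_\kappa}^{\ell-\frac12}\norm{(\psi u)'}_{H^{-2}_\kappa}^{\frac12}.
\end{equation*}
Integration in $t$ proceeds by pulling $\norm{\psi u}_{H^{-1}_\kappa}^{\ell-1/2}$ into $L^\infty_t$ and applying Cauchy--Schwarz twice, producing $\norm{f}_{L^2_tH^{-1}_\kappa}$, a factor $T^{1/4}$, and $\norm{(\psi u)'}_{L^2_tH^{-2}_\kappa}^{1/2}$, which~\eqref{LSk 3 new} converts to $\kappa^{-1/2}(\norm{u}_{\ls_\kappa}+\norm{u}_{L^2_tH^{-1}_\kappa})^{1/2}$.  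The sum over $\ell\geq 3$ is a geometric series with ratio $\kappa^{-1/2}\norm{u}_{L^\infty_tH^{-1}_\kappa}\leq\tfrac12$ from~\eqref{k0}, hence dominated by the $\ell=3$ term $\kappa^{-5/2}T^{1/4}\norm{u}_{L^\infty_tH^{-1}_\kappa}^{5/2}(\norm{u}_{\ls_\kappa}+\norm{u}_{L^2_tH^{-1}_\kappa})^{1/2}$; the smaller $\ell=2$ contribution (with its extra $\kappa^{-2}$) is harmless.  Finally, an AM--GM step together with $\norm{u}_{L^2_tH^{-1}_\kappa}\lesssim\norm{u}_{L^\infty_tH^{-1}_\kappa}$ and one last use of~\eqref{k0} in the form $\kappa^{-1/2}\norm{u}_{L^\infty_tH^{-1}_\kappa}\lesssim 1$ converts the excess $\kappa^{-1/2}\norm{u}_{L^\infty_tH^{-1}_\kappa}^{3/2}$ into the claimed $\kappa^{-5/2}(T\kappa^2)^{1/4}\norm{u}_{L^\infty_tH^{-1}_\kappa}(\norm{u}_{L^\infty_tH^{-1}_\kappa}+\norm{u}_{\ls_\kappa})$.

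The main difficulty is precisely this final power-counting: in isolation the $\ell=3$ trace estimate overshoots the target by a factor $\kappa^{-1/2}\norm{u}_{L^\infty_tH^{-1}_\kappa}^{3/2}$, and one must exploit the smallness hypothesis~\eqref{k0} together with the short-time restriction $T\leq\kappa^{-2}$ in tandem to hit the required $\kappa^{-5/2}(T\kappa^2)^{1/4}$ prefactor exactly.
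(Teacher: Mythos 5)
Your overall strategy — commutator decomposition into $\sqrt{R_0}(\cdot)\sqrt{R_0}$ blocks, placing all but one block in Hilbert--Schmidt via~\eqref{HS} and the remaining one in operator norm via~\eqref{infty-better}, then integrating in time and summing the geometric series using~\eqref{k0} — is the same machinery the paper uses. However, your treatment of $\ell=2$ contains a genuine error, and the choice of where to anchor the sum is backwards relative to the paper.

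You begin with $\ell=3$ and claim that "for $\ell=2$ the analogous decomposition carries a trailing factor $R_0$ whose operator norm is $\lesssim\kappa^{-2}$, which only strengthens the bound." This is incorrect on both counts. First, there is no trailing $R_0$: the identity~\eqref{hl} for $\int f\phi h_\ell\,dx$ already produces exactly $\sqrt{R_0}f\phi(R_0u)^\ell\sqrt{R_0}$, with no extra resolvent. Second, and more importantly, the $\ell=2$ contribution is \emph{larger}, not smaller: your own fixed-time formula $\kappa^{-(\ell+1)/2}\norm{f}_{H^{-1}_\kappa}\norm{\psi u}_{H^{-1}_\kappa}^{\ell-1/2}\norm{(\psi u)'}_{H^{-2}_\kappa}^{1/2}$ gives $\kappa^{-3/2}$ at $\ell=2$ versus $\kappa^{-2}$ at $\ell=3$, i.e.\ the $\ell=2$ term is worse by $\kappa^{1/2}$ (and better by one power of $\norm{u}$), precisely because each additional copy of $u$ buys a factor $\kappa^{-1/2}\norm{u}_{H^{-1}_\kappa}\leq\tfrac12$ from~\eqref{k0}. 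It is the $\ell=2$ term that saturates the target $\kappa^{-5/2}(T\kappa^2)^{1/4}$, and the $\ell\geq 3$ tail that needs~\eqref{k0} to fit; you have this inverted. The paper therefore anchors the argument at $\ell=2$ (reducing to $\phi=\psi^2$, not $\psi^3$, which avoids the stray factor of $\psi$ your $\ell=2$ decomposition would leave behind), performs the AM--GM with the free parameter $c$ there, optimizes $c$ exactly as in~\eqref{which-c}, and then treats every $\ell\geq 3$ by peeling off the extra copies of $u$ in Hilbert--Schmidt norm. Your dismissal of $\ell=2$ as negligible means the dominant term of the sum is never actually estimated; it happens to satisfy the claimed bound (indeed with room to spare, without needing~\eqref{k0} at all), but your reasoning would not detect a failure there if one existed.

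A secondary issue: the general-$\ell$ decomposition you write down requires conjugators $[\psi^jR_0\psi^{-j}]$ for $j$ up to $\ell$, but~\eqref{comm} applies only to $\psi^\ell$ for $\ell\leq 3$ (this is the reason the paper keeps $\psi$'s $\operatorname{sech}(\cdot/6)$ scaling). For $\ell\geq 4$ one must peel off the excess $u$'s via plain $\norm{\sqrt{R_0}u\sqrt{R_0}}_{\hs}$ rather than $\norm{\sqrt{R_0}\psi u\sqrt{R_0}}_{\hs}$. Your geometric-series remark implicitly invokes this, but your displayed formula with $\norm{\psi u}^{\ell-1/2}$ for all $\ell$ does not literally follow from the decomposition you wrote.

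Your Hölder-in-time bookkeeping for $\ell=3$ and the final AM--GM $+$~\eqref{k0} reduction are correct. The fix needed is to run the estimate at $\ell=2$ with $\phi=\psi^2$ as the base case — as the paper does — and then observe (as you already correctly do) that each higher $\ell$ costs a factor $\kappa^{-1/2}\norm{u}_{L^\infty_tH^{-1}_\kappa}\leq\tfrac12$.
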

\begin{proof} Using a continuous partition of unity argument as in the proof of \eqref{LSk 3 new}, it suffices to prove \eqref{h2} in the special case $\phi = \psi^2$.  From the expansion \eqref{g} for $g$, we have
\[
g - \tfrac{1}{2\kappa} - h_1 = \sum_{\ell \geq 2} h_\ell.
\]
Arguing by duality and taking $f\in L^2_tH^{-1}_\kappa([-T,T]\times\R)$, we find
\[
|\langle  \phi ( g - \tfrac{1}{2\kappa} - h_1 ), f \rangle | \lesssim \sum_{\ell  \geq 2} \| \sqrt{R_0} f\psi^2 (R_0 u)^2 \sqrt{R_0} \|_{L^1_t \tc}.
\]

We begin with the $\ell = 2$ term on the RHS above, and expand
\begin{equation*}
 \sqrt{R_0} f\psi^2 (R_0 u)^2 \sqrt{R_0} 
= \sqrt{R_0} f [\psi^2R_0\tfrac{1}{\psi^2}] \psi u [\psi R_0\tfrac{1}{\psi}] \psi u \sqrt{R_0} .
\end{equation*}
By \eqref{comm}, each operator in square brackets above is bounded $H^{-1}_\kappa\to H^1_\kappa$.  This allows us to write
\begin{equation*}
 \sqrt{R_0} f\psi^2 (R_0 u)^2 \sqrt{R_0} 
=  \big( \sqrt{R_0} f \sqrt{R_0} \big) A_2 \big( \sqrt{R_0} \psi u \sqrt{R_0} \big) A_1 \big( \sqrt{R_0} \psi u \sqrt{R_0} \big) 
\end{equation*}
for operators $A_j$ with $\norm{A_j}_{\op}\lesssim 1$.  We use \eqref{hs-bd} for the $f$ 
contribution and for one copy of $\psi u$, and \eqref{infty-better} for the 
other $\psi u$ contribution:
\[
\begin{aligned}
\bnorm{\sqrt{R_0} f\psi^2 (R_0 u)^2 \sqrt{R_0} }_{\tc} \lesssim & \ 
\kappa^{-\frac32} \|f\|_{H^{-1}_\kappa}
\| \psi u\|_{H^{-1}_\kappa}^\frac32 
\| (\psi u)'\|_{H^{-2}_\kappa}^\frac12 
\\
\lesssim & \ 
\kappa^{-\frac32} \|f\|_{H^{-1}_\kappa}
\| \psi u\|_{H^{-1}_\kappa} \big( c^{-1} \| \psi u\|_{H^{-1}_\kappa} +
 c \| (\psi u)'\|_{H^{-2}_\kappa} \big) .
\end{aligned}
\]
Integrating over time and recalling the bound in
\eqref{LSk 3 new} leads to
\[
\begin{aligned}
&\bnorm{\sqrt{R_0} f\psi^2 (R_0 u)^2 \sqrt{R_0} }_{L^1_t \tc} \\
&\qquad\lesssim \kappa^{-\frac32} \snorm{f}_{L^2_t H^{-1}_\kappa}\| \psi u\|_{L^\infty_t H^{-1}_\kappa}
 \big( \kappa^{-1} c 
  \snorm{u}_{\ls_\kappa} 
 +  (c^{-1} + \kappa^{-1} c)  \snorm{u}_{L^2_t H^{-1}_\kappa} \big).
\end{aligned}
\]
Then, by applying H\"older's inequality in time, we obtain
\[
\begin{aligned}
\bnorm{\sqrt{R_0} f\psi^2 (R_0 u)^2 \sqrt{R_0} }_{L^1_t \tc}
\lesssim \kappa^{-\frac52} \snorm{f}_{L^2_t H^{-1}_\kappa}\| \psi u\|_{L^\infty_t H^{-1}_\kappa}
 \big( c
  \snorm{u}_{\ls_\kappa} 
 +  T^\frac12(\kappa c^{-1} +  c)  \snorm{u}_{L^\infty_t H^{-1}_\kappa} \big).
\end{aligned}
\]
By optimizing the choice of $c$, as in \eqref{which-c}, yields
\[
\begin{aligned}
\bnorm{\sqrt{R_0} f\psi^2 (R_0 u)^2 \sqrt{R_0} }_{L^1_t \tc}
\lesssim \kappa^{-2} T^\frac14 \snorm{f}_{L^2_t H^{-1}_\kappa}\|  u\|_{L^\infty_t H^{-1}_\kappa}
 \big( \norm{u}_{L^\infty_tH^{-1}_\kappa} + \norm{u}_{\ls_\kappa} \big) ,
\end{aligned}
\]
which suffices for the $\ell = 2$ term.

For $\ell\geq 3$, we estimate
\begin{align*}
&\bnorm{ \sqrt{R_0} f\psi^2 (R_0 u)^\ell \sqrt{R_0} }_{L^1_t\tc}
\leq \bnorm{ \sqrt{R_0} f\psi^2 (R_0 u)^2 \sqrt{R_0} }_{L^1_t\tc} \bnorm{ \sqrt{R_0} u \sqrt{R_0} }_{L^\infty_t\hs}^{\ell - 2} \\
&\qquad \lesssim \kappa^{-2} T^\frac14  \norm{f}_{L^2_tH^{-1}_\kappa}  \norm{u}_{L^\infty_tH^{-1}_\kappa}\big( \norm{u}_{L^\infty_tH^{-1}_\kappa} + \norm{u}_{\ls_\kappa} \big) \big( \kappa^{-\frac{1}{2}} \norm{u}_{L^\infty_tH^{-1}_\kappa} \big)^{\ell - 2} .
\end{align*}
Summing over $\ell\geq 2$ (exactly as in the proof of the previous lemma) and taking a supremum over $\norm{f}_{L^2_tH^{-1}_\kappa} \leq 1$ yields the second inequality in \eqref{h2}. 
\end{proof}

Altogether, the previous lemmas provide us with the control we need over the functional $u\mapsto g$.  However, the other key functional in our analysis is $u \mapsto \tfrac{1}{g}$; indeed, this is what appears in the definition \eqref{alpha} of $\rho$.  The following lemma provides us with an estimate for the quadratic and higher order terms of $\tfrac{1}{g}$, using the tools that we have already developed.
\begin{lemma}
Given a Schwartz function $\phi(x)$, we have
\begin{equation}
\bnorm{ \phi \big( \tfrac{1}{g} - 2\kappa + \tfrac{4\kappa^2 h_1}{1+2\kappa h_1} \big) }_{L^2_{t,x}} \lesssim \kappa^{-\frac32} (T\kappa^2)^\frac14
 \norm{u}_{L^\infty_tH^{-1}_\kappa} \big( \norm{u}_{L^\infty_tH^{-1}_\kappa} + \norm{u}_{\ls_\kappa} \big)
\label{1/g est 2}
\end{equation}
uniformly for $T\leq\kappa^{-2}$ and $\kappa$ as in \eqref{k0}.
\end{lemma}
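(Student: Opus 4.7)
The plan is to reduce the estimate to the already-established Lemma~\eqref{h2} via an algebraic manipulation. Combining the three terms over a common denominator, and using the expansion $g = \tfrac{1}{2\kappa} + h_1 + \sum_{\ell \geq 2} h_\ell$ from~\eqref{g}, a direct calculation gives the identity
\[
\tfrac{1}{g} - 2\kappa + \tfrac{4\kappa^2 h_1}{1+2\kappa h_1}
\;=\; \tfrac{1}{g} - \tfrac{2\kappa}{1+2\kappa h_1}
\;=\; \tfrac{-2\kappa \sum_{\ell \geq 2} h_\ell}{g(1+2\kappa h_1)}.
\]
The point is that the renormalization $-\tfrac{4\kappa^2 h_1}{1+2\kappa h_1}$ is engineered precisely to cancel the linear-in-$u$ contribution of $\tfrac{1}{g}-2\kappa$, leaving only the quadratic and higher-order tail $\sum_{\ell\geq 2} h_\ell$, multiplied by the pointwise factor $W := \tfrac{1}{g(1+2\kappa h_1)}$.

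Next, I would show $\|W\|_{L^\infty_{t,x}} \lesssim \kappa$. For the first factor, estimate~\eqref{1/g est} of Proposition~\ref{t:g} combined with the embedding $\|\cdot\|_{L^\infty} \lesssim \kappa^{-1/2}\|\cdot\|_{H^1_\kappa}$ gives $\|\tfrac{1}{g} - 2\kappa\|_{L^\infty_{t,x}} \lesssim \kappa^{1/2}\|u\|_{L^\infty_t H^{-1}_\kappa}$, which is $\ll \kappa$ under~\eqref{k0}; hence $\|\tfrac{1}{g}\|_{L^\infty_{t,x}} \lesssim \kappa$. For the second factor, since $2\kappa h_1 = -2R_0(2\kappa)u$, the same embedding together with the mapping property $R_0(2\kappa):H^{-1}_\kappa \to H^1_\kappa$ yields $\|2\kappa h_1\|_{L^\infty_{t,x}} \lesssim \kappa^{-1/2}\|u\|_{L^\infty_t H^{-1}_\kappa}$, which can be forced to be at most $\tfrac{1}{2}$ by enlarging the constant $C$ in~\eqref{k0}. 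Consequently $(1+2\kappa h_1)^{-1}$ is uniformly bounded in $L^\infty_{t,x}$ and the claim on $W$ follows.

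Finally, combining the pointwise bound on $W$ with the elementary embedding $\|f\|_{L^2} \leq (2\kappa)^{-1}\|f\|_{H^1_\kappa}$ (which exactly absorbs the factor $2\kappa$ in the numerator) and Lemma~\eqref{h2}, I obtain
\[
\bnorm{\phi \cdot \tfrac{-2\kappa \sum_{\ell\geq 2} h_\ell}{g(1+2\kappa h_1)}}_{L^2_{t,x}}
\lesssim \|W\|_{L^\infty_{t,x}} \bnorm{\phi \textstyle\sum_{\ell\geq 2} h_\ell}_{L^2_t H^1_\kappa}
\lesssim \kappa^{-\frac{3}{2}}(T\kappa^2)^{\frac14}\|u\|_{L^\infty_t H^{-1}_\kappa}\bigl(\|u\|_{L^\infty_t H^{-1}_\kappa} + \|u\|_{\ls_\kappa}\bigr),
\]
as desired. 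The only delicate step is the uniform lower bound on $1+2\kappa h_1$; once that is settled via~\eqref{k0}, the rest is a straightforward repackaging of Lemma~\eqref{h2}.
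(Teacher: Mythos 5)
Your argument is correct and is essentially the paper's own: your factorization $\tfrac{1}{g} - 2\kappa + \tfrac{4\kappa^2 h_1}{1+2\kappa h_1} = -\tfrac{2\kappa}{g(1+2\kappa h_1)}\sum_{\ell\ge 2}h_\ell$ is algebraically the same as the paper's identity $-\tfrac{2\kappa}{g}\big(1 - \tfrac{2\kappa h_1}{1+2\kappa h_1}\big)\big(g-\tfrac{1}{2\kappa}-h_1\big)$, and you then invoke the same $L^\infty$ bounds on $1/g$ and $(1+2\kappa h_1)^{-1}$ via \eqref{1/g est} and \eqref{k0}, the same embedding $\norm{\cdot}_{L^2}\lesssim\kappa^{-1}\norm{\cdot}_{H^1_\kappa}$, and Lemma~\eqref{h2}. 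No gaps; the bookkeeping of $\kappa$ powers also matches.
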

\begin{proof}
Recall that $h_1 = - \kappa^{-1}R_0(2\kappa) u$.  Choosing $C$ larger if necessary, the condition~\eqref{k0} implies
\begin{equation*}
\norm{ 2\kappa h_1 }_{L^\infty_x}
=  \norm{ 2 R_0(2\kappa) u }_{L^\infty_x}
\lesssim \kappa^{-\frac12} \norm{u}_{H^{-1}_\kappa} 
< 1. 
\end{equation*}
Hence,
\begin{equation}
\left\| \dfrac{1}{1+2\kappa h_1} \right\|_{L^\infty_{t,x}} + \left\| \dfrac{2\kappa h_1}{1+2\kappa h_1} \right\|_{L^\infty_{t,x}}
\lesssim 1 
\label{h1 4}
\end{equation}
uniformly for $\kappa$ satisfying \eqref{k0}.

Now, we use the identity
\begin{equation*}
\frac{1}{g} - 2\kappa + \frac{4\kappa^2 h_1}{1+2\kappa h_1}
= -\frac{2\kappa}{g} \left( 1 - \frac{2\kappa h_1}{1+2\kappa h_1} \right) \left( g -\frac{1}{2\kappa} - h_1 \right)  
\end{equation*}
together with the estimates \eqref{1/g est} and \eqref{h2} to bound
\begin{align*}
\bnorm{ \phi \big( \tfrac{1}{g} - 2\kappa + \tfrac{4\kappa^2 h_1}{1+2\kappa h_1} \big) }_{L^2_{t,x}} 
&\lesssim \kappa \bnorm{\tfrac{1}{g}}_{L^\infty_{t,x}} \bnorm{ \phi \big( g -\tfrac{1}{2\kappa} - h_1 \big) }_{L^2_{t,x}} \\
& \lesssim \kappa \bnorm{ \psi \big( g -\tfrac{1}{2\kappa} - h_1 \big) }_{L^2_tH^1_\kappa}
\\
&\lesssim  \kappa^{-1} T^\frac14   \norm{u}_{L^\infty_tH^{-1}_\kappa} \big( \norm{u}_{L^\infty_tH^{-1}_\kappa} + \norm{u}_{\ls_\kappa} \big) .
\qedhere
\end{align*}
\end{proof}

Before proceeding, we will record one more estimate for the product $h_1h_2$, similar in spirit to the preceding analysis.  (This will be useful in estimating \eqref{j3 2}.)
\begin{lemma}
Given a Schwartz function $\phi(x)$, we have
\begin{equation}
\norm{\phi h_1h_2 }_{L^1_{t,x}} \lesssim \kappa^{-\frac{13}2} (T\kappa^2)^\frac34\norm{u}_{L^\infty_t H^{-1}_\kappa} \big( \norm{u}_{L^\infty_tH^{-1}_\kappa}^2 + \norm{u}_{\ls_\kappa}^2 \big)
\label{h1h2}
\end{equation}
uniformly for $T\leq\kappa^{-2}$ and $\kappa\geq 1$.
\end{lemma}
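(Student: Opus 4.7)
The plan is a direct H\"older factorization that reuses the separate estimates already established for $h_1$ and $h_2$. As in the proofs of the earlier lemmas, I would first reduce by a continuous partition of unity (exactly as in the proof of \eqref{LSk 3 new}) to the special case $\phi = \psi^2$, which factors as $\psi \cdot \psi$. Applying Cauchy--Schwarz in both $x$ and $t$ then gives
\[
\norm{\psi^2 h_1 h_2}_{L^1_{t,x}} \leq \norm{\psi h_1}_{L^2_{t,x}}\,\norm{\psi h_2}_{L^2_{t,x}}.
\]

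For the first factor, I would invoke \eqref{h1 1} followed by H\"older in time to obtain $\norm{\psi h_1}_{L^2_{t,x}} \lesssim \kappa^{-2}\norm{u}_{L^2_t H^{-1}_\kappa} \leq \kappa^{-2}T^{1/2}\norm{u}_{L^\infty_t H^{-1}_\kappa}$. For the second factor, I would keep only the $\ell=2$ term from the sum in \eqref{h2} and then descend from $L^2_t H^1_\kappa$ to $L^2_{t,x}$ via the elementary embedding $\norm{f}_{L^2} \leq (2\kappa)^{-1}\norm{f}_{H^1_\kappa}$, producing
\[
\norm{\psi h_2}_{L^2_{t,x}} \lesssim \kappa^{-7/2}(T\kappa^2)^{1/4}\norm{u}_{L^\infty_t H^{-1}_\kappa}\bigl(\norm{u}_{L^\infty_t H^{-1}_\kappa}+\norm{u}_{\ls_\kappa}\bigr).
\]

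Multiplying the two bounds, the $\kappa$- and $T$-powers combine as $\kappa^{-2-7/2+1/2}\cdot T^{1/2+1/4} = \kappa^{-5}T^{3/4} = \kappa^{-13/2}(T\kappa^2)^{3/4}$, exactly matching the target. The resulting trilinear factor $\norm{u}^2(\norm{u}+\norm{u}_{\ls_\kappa})$ then collapses to the stated form $\norm{u}(\norm{u}^2+\norm{u}_{\ls_\kappa}^2)$ via the elementary inequality $\norm{u}^2\norm{u}_{\ls_\kappa} \leq \tfrac{1}{2}\norm{u}(\norm{u}^2+\norm{u}_{\ls_\kappa}^2)$, which is simply $2xy\leq x^2+y^2$ multiplied through by $\norm{u}_{L^\infty_t H^{-1}_\kappa}$. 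I do not foresee any substantive obstacle here: the key observation is simply that the cubic-in-$u$ structure of $h_1 h_2$ splits as (linear)$\cdot$(quadratic), which lets us place the linear $h_1$ factor in $L^2_{t,x}$ using only $\norm{u}_{L^2_t H^{-1}_\kappa}$ (no local smoothing required) and concentrate the full local-smoothing cost in the $h_2$ factor through \eqref{h2}. A more pedestrian trace-duality argument in the spirit of the proofs of \eqref{h2} and \eqref{h3} would also work, but the H\"older factorization is markedly cleaner.
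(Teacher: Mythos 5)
Your argument is correct, and it takes a genuinely different and more economical route than the paper's. The paper proves \eqref{h1h2} by a fresh trace-duality computation: it writes $\int f\psi^3 h_1 h_2 = \tr\{\sqrt{R_0}\, f\psi^3 h_1 (R_0 u)^2\sqrt{R_0}\}$, absorbs $f\psi h_1$ into a single composite test function (bounded via $\|f\psi h_1\|_{H^{-1}_\kappa}\lesssim\kappa^{-3}\|f\|_{L^\infty}\|\psi u\|_{H^{-1}_\kappa}$), applies the operator bounds \eqref{hs-bd} and \eqref{infty-better} to the remaining two copies of $u$, and balances with an auxiliary constant as in \eqref{which-c}. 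You instead observe that the cubic expression factors as (linear)$\cdot$(quadratic) in $u$, split by Cauchy--Schwarz in $(t,x)$, and reuse the already-established $L^2$ bounds: \eqref{h1 1} for $\psi h_1$ and the $\ell=2$ term of the proof of \eqref{h2} (dropped to $L^2_{t,x}$ via $\|\cdot\|_{L^2}\leq(2\kappa)^{-1}\|\cdot\|_{H^1_\kappa}$) for $\psi h_2$. Your bookkeeping checks out: $\kappa^{-2}T^{1/2}\cdot\kappa^{-7/2}(T\kappa^2)^{1/4}=\kappa^{-13/2}(T\kappa^2)^{3/4}$, and the AM--GM reduction of $\|u\|^2(\|u\|+\|u\|_{\ls_\kappa})$ to $\|u\|(\|u\|^2+\|u\|_{\ls_\kappa}^2)$ is fine. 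One small point to be precise about: the lemma is claimed uniformly for $\kappa\geq 1$ (without \eqref{k0}), whereas the statement of \eqref{h2} carries the hypothesis \eqref{k0} because of the geometric sum over $\ell\geq 2$; you should therefore cite the isolated $\ell=2$ bound from inside the proof of \eqref{h2}, which is valid for all $\kappa\geq 1$, rather than the displayed statement of \eqref{h2}. You already flag that you keep only $\ell=2$, so this is a matter of phrasing rather than a gap. Your factorization is shorter and conceptually transparent; the paper's direct trace argument keeps all the $h_\ell$ and $h_1 h_2$ estimates in a uniform format, which is its main advantage here.
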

\begin{proof}
Using a continuous partition of unity argument as in the proof of \eqref{LSk 3 new}, it suffices to prove \eqref{h1h2} in the special case where $\phi = \psi^3$.  

We argue by duality: for $f\in L^\infty([-T,T]\times\R)$, we have
\begin{equation*}
\int_{-T}^T \int_{-\infty}^\infty f\psi^3 h_1h_2 \dx\dt
= \int_{-T}^T \tr\big\{ \sqrt{R_0} f\psi^3 h_1 (R_0u)^2 \sqrt{R_0} \big\} \dt .
\end{equation*}
By \eqref{comm}, we may write
\begin{equation*}
\sqrt{R_0} f\psi^3 h_1 (R_0u)^2 \sqrt{R_0} 
= \sqrt{R_0} f \psi h_1 \sqrt{R_0} A_2 \sqrt{R_0} \psi u \sqrt{R_0} A_1 \sqrt{R_0} \psi u \sqrt{R_0} 
\end{equation*}
for operators $A_j$ with $\norm{A_j}_{\op}\lesssim 1$.

Now we use the bounds \eqref{hs-bd} and \eqref{infty-better}
to  estimate
\[
\bnorm{\sqrt{R_0} f\psi^3 h_1 (R_0u)^2 \sqrt{R_0}}_{\tc} \lesssim 
\kappa^{-\frac32} \| f \psi h_1\|_{H^{-1}_\kappa} \| \psi u\|_{H^{-1}_\kappa}^\frac32 \| (\psi u)'\|_{H^{-2}_\kappa}^\frac12 .
\]
As $h_1 = -\kappa^{-1} R_0(2\kappa)u$,  the first factor on the right is estimated as 
\[
\| f \psi h_1\|_{H^{-1}_\kappa} 
\lesssim  \kappa^{-1} \| f \|_{L^\infty} \|\psi h_1\|_{L^2} 
\lesssim  \kappa^{-3} \| f \|_{L^\infty} \|\psi R_0 u\|_{H^1_\kappa}
\lesssim  \kappa^{-3} \| f \|_{L^\infty} \|\psi  u\|_{H^{-1}_\kappa} .
\]
Hence we obtain the fixed-time bound
\[
\begin{aligned}
\bnorm{\sqrt{R_0} f\psi^3 h_1 (R_0u)^2 \sqrt{R_0}}_{\tc} \lesssim & \ 
\kappa^{-\frac92} \| f \|_{L^\infty} \| \psi u\|_{H^{-1}_\kappa}^\frac52 \| (\psi u)'\|_{H^{-2}_\kappa}^\frac12
\\
\lesssim & \ 
\kappa^{-\frac92} \| f \|_{L^\infty} \| \psi u\|_{H^{-1}_\kappa} \big( c^3 \| (\psi u)'\|_{H^{-2}_\kappa}^2 + c^{-1} \| \psi u\|_{H^{-1}_\kappa}^2 \big) .
\end{aligned}
\]
Integrating in time and using \eqref{LSk 3 new}, this yields
\[
\begin{aligned}
&\bnorm{\sqrt{R_0} f\psi^3 h_1 (R_0u)^2 \sqrt{R_0}}_{L^1_t \tc} \\
&\qquad\lesssim 
\kappa^{-\frac{13}2} \| f \|_{L^\infty_{t,x}} \| \psi u\|_{L^\infty_tH^{-1}_\kappa} \big( c^3 \| u \|_{\ls_\kappa}^2 + T ( c^{-1} \kappa^{2} + c^3) \| \psi u\|_{L^\infty_t H^{-1}_\kappa}^2 \big) .
\end{aligned}
\]
Balancing $c$ as in \eqref{which-c},
we arrive at
\[
\begin{aligned}
\bnorm{\sqrt{R_0} f\psi^3 h_1 (R_0u)^2 \sqrt{R_0}}_{L^1_t \tc} \lesssim & \ 
\kappa^{-\frac{13}2} (T\kappa^2)^\frac34 \| f \|_{L^\infty_{t,x}} \| \psi u\|_{L^\infty_tH^{-1}_\kappa} \big( \norm{u}_{L^\infty_tH^{-1}_\kappa}^2 + \norm{u}_{\ls_\kappa}^2 \big) 
\end{aligned}
\]
as required.
\end{proof}

We conclude this section with two estimates for the functional derivative of $\rho$, which appears on RHS\eqref{alpha micro 2}.  First, we present an estimate that does not make use of local smoothing:
\begin{lemma}
Given $T>0$, we have
\begin{equation}
\bnorm{  d\rho|_u(f) }_{L^1_{t,x}} \lesssim \kappa^{-1} \norm{u}_{L^\infty_tH^{-1}_\kappa} \norm{f}_{L^1_tH^{-1}_\kappa}
\label{dg H-1}
\end{equation}
uniformly for $\kappa$ satisfying \eqref{k0}. 
\end{lemma}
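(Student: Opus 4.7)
The renormalization of $\rho$ by the $\kappa$ and $2\kappa R_0(2\kappa) u$ terms forces $d\rho|_0(f) \equiv 0$.  Indeed, at $u=0$ we have $g = g_0 := \tfrac{1}{2\kappa}$ and, since $R_0(x,z) = \tfrac{1}{2\kappa}e^{-\kappa|x-z|}$, a direct computation gives $(R_0 f R_0)(x,x) = \kappa^{-1} R_0(2\kappa) f(x)$ and hence $dg|_0(f) = -\kappa^{-1}R_0(2\kappa) f$, so that
\[
d\rho|_0(f) = 2\kappa^2 \cdot \bigl(-\kappa^{-1}R_0(2\kappa)f\bigr) + 2\kappa R_0(2\kappa) f = 0.
\]
Exploiting this cancellation, I split
\[
d\rho|_u(f) = \underbrace{\Bigl[\tfrac{1}{2g^2} - \tfrac{1}{2g_0^2}\Bigr] dg|_u(f)}_{=: A} + \underbrace{\tfrac{1}{2g_0^2}\bigl[dg|_u(f) - dg|_0(f)\bigr]}_{=: B},
\]
bound each piece at a fixed time by $\kappa^{-1}\|u\|_{H^{-1}_\kappa}\|f\|_{H^{-1}_\kappa}$, and then integrate using $L^\infty_t\cdot L^1_t\hookrightarrow L^1_t$.

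\textbf{Estimate of $B$.} Here $\tfrac{1}{2g_0^2}=2\kappa^2$ and $dg|_u(f) - dg|_0(f) = -[R_u f R_u - R_0 f R_0](x,x)$. Using the symmetric form $R_u = \sqrt{R_0}\,M\,\sqrt{R_0}$ with $M := (1+K)^{-1}$, $K := \sqrt{R_0}\,u\,\sqrt{R_0}$, and writing $F := \sqrt{R_0}\,f\,\sqrt{R_0}$, one finds
\[
R_u f R_u - R_0 f R_0 = \sqrt{R_0}\bigl(MFM - F\bigr)\sqrt{R_0},
\]
and $MFM-F$ expands into two terms each containing at least one factor of $K$. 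Dualizing against $\phi\in L^\infty_x$ with $\|\phi\|_{L^\infty}\leq 1$ and setting $\Phi := \sqrt{R_0}\,\phi\,\sqrt{R_0}$, cyclicity of the trace together with $|\tr(XY)|\leq \|X\|_{\hs}\|Y\|_{\hs}$ produces
\[
\bigl|\tr\{\phi(R_u f R_u - R_0 f R_0)\}\bigr| \lesssim \|\Phi\|_{\op}\,\|K\|_{\hs}\,\|F\|_{\hs}.
\]
The bounds $\|\Phi\|_{\op}\leq \|\sqrt{R_0}\|_{\op}^2\|\phi\|_{L^\infty}\lesssim \kappa^{-2}$, $\|K\|_{\hs}=\kappa^{-\frac12}\|u\|_{H^{-1}_\kappa}$ and $\|F\|_{\hs}=\kappa^{-\frac12}\|f\|_{H^{-1}_\kappa}$ from \eqref{HS}, and $\|M\|_{\op}\lesssim 1$ (from \eqref{k0}), yield $\lesssim \kappa^{-3}\|u\|_{H^{-1}_\kappa}\|f\|_{H^{-1}_\kappa}$.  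Multiplying by $2\kappa^2$ and taking the supremum over $\phi$ gives $\|B\|_{L^1_x}\lesssim \kappa^{-1}\|u\|_{H^{-1}_\kappa}\|f\|_{H^{-1}_\kappa}$.

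\textbf{Estimate of $A$.} Cauchy--Schwarz in $x$ gives
\[
\|A\|_{L^1_x}\leq \bnorm{\tfrac{1}{2g^2}-\tfrac{1}{2g_0^2}}_{L^2_x}\cdot\|dg|_u(f)\|_{L^2_x}.
\]
Factoring $\tfrac{1}{2g^2}-\tfrac{1}{2g_0^2} = \tfrac12\bigl(\tfrac{1}{g}-2\kappa\bigr)\bigl(\tfrac{1}{g}+2\kappa\bigr)$, the bound $\|v\|_{L^2}\leq(2\kappa)^{-1}\|v\|_{H^1_\kappa}$ combined with \eqref{1/g est} gives $\|\tfrac{1}{g}-2\kappa\|_{L^2}\lesssim \|u\|_{H^{-1}_\kappa}$, and the embedding $\|v\|_{L^\infty}\lesssim \kappa^{-\frac12}\|v\|_{H^1_\kappa}$ together with \eqref{1/g est} and \eqref{k0} bounds $\|\tfrac{1}{g}+2\kappa\|_{L^\infty}\lesssim \kappa$; thus the first factor is $\lesssim \kappa\|u\|_{H^{-1}_\kappa}$.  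For the second factor, I dualize against $\phi\in L^2_x$: writing $\tr(\phi R_u f R_u) = \tr(\Phi M F M)$ (with $\Phi$, $M$, $F$ as above) and using $|\tr(XY)|\leq \|X\|_{\hs}\|Y\|_{\hs}$ with \eqref{HS} plus $\|\phi\|_{H^{-1}_\kappa}\leq (2\kappa)^{-1}\|\phi\|_{L^2}$ yields
\[
|\tr(\phi R_u f R_u)|\lesssim \|\Phi\|_{\hs}\|F\|_{\hs}\lesssim \kappa^{-\frac32}\|\phi\|_{L^2}\cdot\kappa^{-\frac12}\|f\|_{H^{-1}_\kappa}=\kappa^{-2}\|\phi\|_{L^2}\|f\|_{H^{-1}_\kappa},
\]
so $\|dg|_u(f)\|_{L^2_x}\lesssim \kappa^{-2}\|f\|_{H^{-1}_\kappa}$.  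Multiplying yields $\|A\|_{L^1_x}\lesssim \kappa^{-1}\|u\|_{H^{-1}_\kappa}\|f\|_{H^{-1}_\kappa}$, as required.

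The main subtle point is arranging the decomposition $A+B$ so that the renormalization cancellation at $u=0$ is visible term by term; once this is in place, each summand reduces to a direct application of the trace-ideal calculus together with the existing bounds \eqref{HS} and \eqref{1/g est}.  The presence of the factor $\kappa^{-2}$ coming from $\|\sqrt{R_0}\|_{\op}^2$ exactly cancels the $2\kappa^2$ coming from $\tfrac{1}{2g_0^2}$, making the overall $\kappa^{-1}$ decay possible.
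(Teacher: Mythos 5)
Your proof is correct, and it takes a route that is recognizably different from the paper's, even though both hinge on the same two pillars: the cancellation $d\rho|_0(f)=0$ and trace-ideal estimates anchored by \eqref{HS} and \eqref{1/g est}. The paper expands $R_u = \sum_{\ell\ge 0}(-1)^\ell(R_0u)^\ell R_0$ inside $dg|_u$, isolates the $\ell=m=0$ term (where the cancellation with $2\kappa R_0(2\kappa)f$ occurs and the remainder $-\int h(\tfrac{1}{g^2}-4\kappa^2)\kappa^{-1}R_0(2\kappa)f\,\dx$ is controlled via \eqref{1/g est}), and then sums the geometric series over $\ell+m\ge 1$ with $\bnorm{\sqrt{R_0}u\sqrt{R_0}}_{\hs}^{\ell+m}$. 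In effect, the paper's split is $d\rho|_u(f) = [\tfrac{1}{2g^2}-2\kappa^2]\,dg|_0(f) + \tfrac{1}{2g^2}[dg|_u(f)-dg|_0(f)]$, which places the easy $dg|_0(f)=-\kappa^{-1}R_0(2\kappa)f$ in the first piece. You instead use the complementary split, with $dg|_u(f)$ in the first piece and the constant $2\kappa^2$ in the second, and you replace the geometric series by the resummed operator $M=(1+K)^{-1}$ so that $dg|_u-dg|_0$ is captured by $MFM-F = -KMFM - FKM$. This buys a shorter, coordinate-free argument with no infinite sum to control, at the small cost of an extra duality step to bound $\norm{dg|_u(f)}_{L^2_x}\lesssim\kappa^{-2}\norm{f}_{H^{-1}_\kappa}$; the paper's variant avoids that step because $dg|_0(f)$ is explicit, but must sum a series. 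Both are sound, and your presentation makes the source of the $\kappa^{-1}$ decay (the cancellation of $\norm{\sqrt{R_0}}_{\op}^2=\kappa^{-2}$ against $2\kappa^2$) particularly transparent.
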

\begin{proof}
Arguing by duality, let $h\in L^\infty([-T,T]\times\R)$.  Expanding the series \eqref{Ru-exp} in the definition \eqref{dg} of $dg$, we obtain
\begin{equation*}
\int_{-T}^T \int h \tfrac{1}{g^2} dg|_u(f) \dx\dt 
= \sum_{\ell,m\geq 0} \int_{-T}^T (-1)^{\ell+m+1} \tr\big\{ \tfrac{h}{g^2} (R_0 u)^\ell R_0 f (R_0 u)^m R_0 \big\} \dt .
\end{equation*}
This is the main term in the formula \eqref{drho} for $d\rho$.

When $\ell = m =0$, we have
\begin{align*}
-\tr\big\{ \tfrac{h}{g^2} R_0 f R_0 \big\}
&= -\int \tfrac{h}{g^2}\, \kappa^{-1}R_0(2\kappa)f \dx \\
&= -\int h\, 4\kappa R_0(2\kappa)f \dx - \int h\big( \tfrac{1}{g^2} - 4\kappa^2 \big) \kappa^{-1} R_0(2\kappa)f \dx .
\end{align*}
The first term on the RHS above is canceled out by the contribution of $2\kappa R_0(2\kappa) f$ in \eqref{drho}.  It then remains to estimate the second term, for which we use \eqref{1/g est}:
\begin{align*}
&\int_{-T}^T \int \big| h\big( \tfrac{1}{g^2} - 4\kappa^2 \big) \kappa^{-1} R_0(2\kappa)f \big| \dx \dt \\
&\qquad \lesssim \norm{h}_{L^\infty_{t,x}} \snorm{ \tfrac{1}{g} - 2\kappa}_{L^\infty_tL^2_x} \big( \snorm{\tfrac{1}{g}}_{L^\infty_{t,x}} + 2\kappa \big) \kappa^{-1} \norm{R_0(2\kappa) f}_{L^1_tL^2_x} \\
&\qquad \lesssim \kappa^{-1}\norm{h}_{L^\infty_{t,x}} \norm{u}_{L^\infty_tH^{-1}_\kappa} \norm{f}_{L^1_tH^{-1}_\kappa} .
\end{align*} 

Next, we turn to the terms with $\ell + m \geq 1$.  In this case, there are at least two operators we can put in Hilbert--Schmidt norm:
\begin{align*}
&\sum_{ \ell+m\geq 1 } \bnorm{ \tfrac{h}{g^2} (R_0u)^\ell R_0 f (R_0u)^m R_0 }_{L^1_t\tc} \\
&\qquad \leq \sum_{ \ell+m\geq 1 } \bnorm{ \sqrt{R_0}\tfrac{h}{g^2} \sqrt{R_0} }_{L^\infty_t\op} \bnorm{ \sqrt{R_0}f\sqrt{R_0} }_{L^1_t\hs} \bnorm{\sqrt{R_0}u\sqrt{R_0}}_{L^\infty_t\hs}^{\ell+m} \\
&\qquad\lesssim \kappa^{-\frac52} \sum_{ \ell+m\geq 1 } \bnorm{\tfrac{h}{g^2}}_{L^\infty_{t,x}} \norm{f}_{L^1_tH^{-1}_\kappa} \big( \kappa^{-\frac{1}{2}} \norm{u}_{L^\infty_tH^{-1}_\kappa} \big)^{\ell+m} \\
&\qquad \lesssim \kappa^{-1} \norm{u}_{L^\infty_tH^{-1}_\kappa} \norm{h}_{L^\infty_{t,x}} \norm{f}_{L^1_tH^{-1}_\kappa} .
\end{align*}

Altogether, we have
\begin{equation*}
\int_{-T}^T \int \big| h\,d\rho|_u(f) \big|\dx\dt 
\lesssim \kappa^{-1} \norm{u}_{L^\infty_tH^{-1}_\kappa}  \norm{h}_{L^\infty_{t,x}} \norm{f}_{L^1_tH^{-1}_\kappa} .
\end{equation*}
Taking a supremum over $\norm{h}_{L^\infty_{t,x}}\leq 1$, we obtain \eqref{dg H-1}.
\end{proof}

We can also handle one derivative inside $d\rho$, provided that the input is localized in space and we may estimate one copy of $u$ in local smoothing norm:
\begin{lemma}
We have
\begin{equation}
 \bigg| \int_{-T}^T \int \phi_{x_0}\, d\rho|_u (\psi^2f)'\dx\dt \bigg| 
\lesssim \kappa^{-1} \norm{\psi f}_{L^2_tH^{-1}_\kappa} \big( \norm{u}_{L^\infty_tH^{-1}_\kappa} + \norm{u}_{\ls_\kappa} \big) 
\label{dg LSk}
\end{equation}
uniformly for $x_0\in\R$, $T\leq\kappa^{-2}$, and $\kappa$ satisfying \eqref{k0}, where $\phi$ is defined by
\begin{equation}
\phi_{x_0}(x) = \phi(x-x_0) = 6\operatorname{tanh}(\tfrac{x-x_0}{6}) \quad\tx{so that}\quad \phi'_{x_0} = \psi^2_{x_0} .
\label{switch fnc}
\end{equation}
\end{lemma}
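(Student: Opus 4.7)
The plan is to perform an ``integration by parts'' that moves the derivative inside $(\psi^2 f)'$ onto the weight $\phi_{x_0}$.  The mechanism is the functional identity
\[
d\rho|_u\bigl((\psi^2 f)'\bigr) = \partial d\rho|_u(\psi^2 f) + \tfrac{g'}{g^3}\,dg|_u(\psi^2 f) - \tfrac{1}{g^2}\,\bigl\langle\delta_x,\, R_u u' R_u (\psi^2 f)\, R_u\,\delta_x\bigr\rangle ,
\]
which follows by differentiating the definition $d\rho|_u = \tfrac{1}{2g^2}\,dg|_u + 2\kappa R_0(2\kappa)$ using $[\partial, R_0(2\kappa)] = 0$ together with the commutator formula $[\partial, R_u] = -R_u u' R_u$ (applied twice in the computation of $\partial dg|_u(h)$, with one of the two resulting terms recombining via the symmetry $\operatorname{diag}(R_u A R_u B R_u) = \operatorname{diag}(R_u B R_u A R_u)$).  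Integrating against $\phi_{x_0}$ and using $\phi_{x_0}' = \psi_{x_0}^2$ to integrate the first summand by parts then decomposes the left-hand side of \eqref{dg LSk} as
\[
-\int\psi_{x_0}^2\,d\rho|_u(\psi^2 f)\,dx\,dt + \int\phi_{x_0}\,\tfrac{g'}{g^3}\,dg|_u(\psi^2 f)\,dx\,dt - \int\phi_{x_0}\,\tfrac{1}{g^2}\,\bigl\langle\delta_x, R_u u' R_u (\psi^2 f) R_u \delta_x\bigr\rangle\,dx\,dt .
\]

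The first summand is bounded directly by \eqref{dg H-1} applied with input $\psi^2 f$.  Since $\|\psi^2 f\|_{L^1_t H^{-1}_\kappa}\leq T^{1/2}\|\psi^2 f\|_{L^2_t H^{-1}_\kappa}\lesssim T^{1/2}\|\psi f\|_{L^2_t H^{-1}_\kappa}\leq \kappa^{-1}\|\psi f\|_{L^2_t H^{-1}_\kappa}$ by Cauchy--Schwarz in time and the hypothesis $T\leq\kappa^{-2}$, its contribution is $\lesssim \kappa^{-2}\|u\|_{L^\infty_t H^{-1}_\kappa}\|\psi f\|_{L^2_t H^{-1}_\kappa}$, already within the target.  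For the ``$g'$ summand'', I would expand $R_u = \sum_\ell (-1)^\ell (R_0 u)^\ell R_0$ inside $dg|_u$ and estimate each resulting trace.  The leading ($\ell = 0$) contribution equals $-\tfrac{1}{\kappa}\int \phi_{x_0}(g'/g^3) R_0(2\kappa)(\psi^2 f)\,dx\,dt$, controlled via Cauchy--Schwarz in time using $\|\phi_{x_0}/g^3\|_{L^\infty}\lesssim\kappa^3$, the pointwise bound $\|g'\|_{L^2_{t,x}}\leq T^{1/2}\kappa^{-1}\|u\|_{L^\infty_t H^{-1}_\kappa}\leq \kappa^{-2}\|u\|_{L^\infty_t H^{-1}_\kappa}$ (from \eqref{g est}), and the isometry $R_0(2\kappa)\colon H^{-1}_\kappa\to H^1_\kappa$.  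Higher-order terms acquire additional $\sqrt{R_0}\,u\,\sqrt{R_0}$ factors in Hilbert--Schmidt norm via \eqref{HS}, producing a convergent geometric series under \eqref{k0}.

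The ``$u'$ summand'' is where the local smoothing norm of $u$ enters.  Expanding $R_u$ as a series and bounding the resulting traces by H\"older for Schatten ideals, I place $\sqrt{R_0}\,u'\,\sqrt{R_0}$ in $L^2_t\op$ via \eqref{LSk H1k 2} (contributing $\kappa^{-1/2}\|u\|_{\ls_\kappa}$) and $\sqrt{R_0}\,(\psi^2 f)\,\sqrt{R_0}$ in $L^2_t\hs$ via \eqref{HS} (contributing $\kappa^{-1/2}\|\psi f\|_{L^2_t H^{-1}_\kappa}$).  The main obstacle is that the multiplier $\phi_{x_0}/g^2$ does not decay at spatial infinity, so $\sqrt{R_0}\,M_{\phi_{x_0}/g^2}\,\sqrt{R_0}$ cannot be placed in Hilbert--Schmidt norm.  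I resolve this by using the operator-norm bound $\|\sqrt{R_0}\,M_{\phi_{x_0}/g^2}\,\sqrt{R_0}\|_\op\lesssim\kappa^{-2}\|\phi_{x_0}/g^2\|_{L^\infty}\lesssim 1$, together with an extra $\sqrt{R_0}\,u\,\sqrt{R_0}$ factor (drawn from the next order of the $R_u$ expansion) in Hilbert--Schmidt norm that completes a four-factor Schatten estimate with two operator-norm slots and two Hilbert--Schmidt slots; the auxiliary smallness $\kappa^{-1/2}\|u\|_{L^\infty_t H^{-1}_\kappa}\ll 1$ needed to absorb this extra factor is supplied by \eqref{k0}, and the remaining higher-order terms sum to a convergent geometric series.
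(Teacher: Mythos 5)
Your integration-by-parts identity
\[
d\rho|_u\bigl((\psi^2 f)'\bigr) = \partial\, d\rho|_u(\psi^2 f) + \tfrac{g'}{g^3}\,dg|_u(\psi^2 f) - \tfrac{1}{g^2}\operatorname{diag}\bigl(R_u u' R_u(\psi^2 f)R_u\bigr)
\]
is correct (I verified the sign using $[\partial,R_u]=-R_uu'R_u$ and the symmetry of $\operatorname{diag}$), and this is a genuinely different decomposition from the paper's, which instead splits $4\kappa R_0(2\kappa)(\psi^2 f)'$ into a $\tfrac{1}{g^2}$ piece plus $(\tfrac{1}{g}-2\kappa+\tfrac{4\kappa^2h_1}{1+2\kappa h_1})$ and $\tfrac{4\kappa^2h_1}{1+2\kappa h_1}$ corrections. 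Your first summand (bounded via \eqref{dg H-1} and $T^{1/2}\leq\kappa^{-1}$) and second summand (Cauchy--Schwarz with $\|g'\|_{L^2_{t,x}}$ and an $\hs/\op$ expansion for higher orders) both check out.

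However, there is a genuine gap in your treatment of the third summand. For the leading term $\ell_1=\ell_2=\ell_3=0$ of the $R_u$ expansion you are estimating
\[
\tr\bigl\{\tfrac{\phi}{g^2}\,R_0\,u'\,R_0\,\psi^2 f\,R_0\bigr\},
\]
and your proposal places $\sqrt{R_0}\,\tfrac{\phi}{g^2}\,\sqrt{R_0}$ and $\sqrt{R_0}\,u'\,\sqrt{R_0}$ in operator norm with only $\sqrt{R_0}\,\psi^2 f\,\sqrt{R_0}$ in Hilbert--Schmidt norm. That is not a valid trace-class estimate: the trace of a product of one Hilbert--Schmidt and two bounded operators is not controlled by $\|\cdot\|_{\hs}\|\cdot\|_\op\|\cdot\|_\op$ --- you need at least two Hilbert--Schmidt slots. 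You write that an extra $\sqrt{R_0}\,u\,\sqrt{R_0}$ factor is ``drawn from the next order of the $R_u$ expansion,'' but that confuses different terms of the Neumann series: the leading-order term has no $u$ to borrow. Moreover, the standard way out --- as in the paper's treatment of its own $\ell+m=1$ term --- is to exploit the adjacency of $\psi^2 f$ and $u'$ across a single $R_0$ to write $\psi^2 f R_0 u' = \psi f\,[\psi R_0\tfrac{1}{\psi}]\,\psi u'$ and then put both $\sqrt{R_0}\,\psi f\,\sqrt{R_0}$ and $\sqrt{R_0}\,\psi u'\,\sqrt{R_0}$ in Hilbert--Schmidt norm. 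But in your decomposition $u'$ and $\psi^2 f$ are separated by $\tfrac{\phi}{g^2}$ in the trace cycle, so that move is not directly available: $R_0\psi^2 f$ cannot pass a $\psi$ to $u'$ without producing a growing factor $1/\psi$ somewhere. So either you need a different splitting of $\tfrac{\phi}{g^2}$ (say, into $4\kappa^2\phi$ plus an $L^2$ remainder, and a further manipulation of the $4\kappa^2\phi$ piece using $4\kappa^2 R_0 = 1+\partial^2 R_0$), or you must abandon the Schatten--H\"older route for this one term and give a direct kernel estimate. As written, the leading term of the third summand is not estimated.
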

\begin{proof}
In the following, we will set $\phi = \phi_{x_0}$ for simplicity.  First, we use \eqref{drho} to write
\begin{align}
2\int \phi\, d\rho|_u (\psi^2f)'\dx 
&= \int \phi \Big\{ \tfrac{1}{g^2} dg|_u(\psi^2f)' + 4\kappa R_0(2\kappa) (\psi^2f)' \Big\} \dx 
\nonumber \\
& = \int \tfrac{\phi}{g^2} \Big\{  dg|_u(\psi^2f)' + \kappa^{-1} R_0(2\kappa) (\psi^2f)' \Big\} \dx 
\label{dg LSk 1}\\
&\quad- \kappa^{-1} \int \phi \big( \tfrac{1}{g} + 2\kappa \big) \big( \tfrac{1}{g} - 2\kappa + \tfrac{4\kappa^2h_1}{1+2\kappa h_1} \big)  R_0(2\kappa) (\psi^2f)' \dx 
\label{dg LSk 2}\\
&\quad+ \kappa^{-1} \int \phi \big( \tfrac{1}{g} + 2\kappa \big) \tfrac{4\kappa^2h_1}{1+2\kappa h_1} R_0(2\kappa) (\psi^2f)' \dx .
\label{dg LSk 3}
\end{align}
We will estimate the contributions of \eqref{dg LSk 1}--\eqref{dg LSk 3} individually.

Let us start with \eqref{dg LSk 2}.  By \eqref{1/g est}, \eqref{1/g est 2}, and \eqref{comm}, we have
\begin{align*}
&\kappa^{-1} \int_{-T}^T \bigg| \int \phi \big( \tfrac{1}{g} + 2\kappa \big) \big( \tfrac{1}{g} - 2\kappa + \tfrac{4\kappa^2h_1}{1+2\kappa h_1} \big)  R_0(2\kappa) (\psi^2f)' \dx \bigg| \dt \\
&\qquad \lesssim \kappa^{-1} \norm{\phi}_{L^\infty} \bnorm{ \tfrac{1}{g} + 2\kappa }_{L^\infty_{t,x}} \bnorm{ \psi \big( \tfrac{1}{g} - 2\kappa + \tfrac{4\kappa^2 h_1}{1+2\kappa h_1} \big) }_{L^2_{t,x}} \bnorm{ \tfrac{1}{\psi} R_0\partial \psi }_{H^{-1}_\kappa\to L^2} \norm{\psi f}_{L^2_tH^{-1}_\kappa} \\
&\qquad\lesssim \kappa^{-\frac32} (T\kappa^2)^\frac14  \norm{u}_{L^\infty_tH^{-1}_\kappa}
 \big( \norm{u}_{L^\infty_tH^{-1}_\kappa} + \norm{u}_{\ls_\kappa} \big)\norm{\psi f}_{L^2_tH^{-1}_\kappa} .
\end{align*} 

Next, we turn to \eqref{dg LSk 3}.  Integrating by parts and using \eqref{comm}, \eqref{1/g est}, \eqref{h1 2}, \eqref{h1 4}, \eqref{LSk g}, and \eqref{h1 1}, we have
\begin{align*}
&\kappa^{-1} \int_{-T}^T \bigg| \int \phi \big( \tfrac{1}{g} + 2\kappa \big) \tfrac{4\kappa^2h_1}{1+2\kappa h_1} R_0(2\kappa) (\psi^2f)' \dx \bigg| \dt \\
&\qquad \lesssim \kappa^{-1} \norm{ \psi f }_{L^2_tH^{-1}_\kappa} \bnorm{ \psi R_0 \tfrac{1}{\psi} }_{L^2 \to H^1_\kappa} \bnorm{ \psi \big[ \phi \big( \tfrac{1}{g} + 2\kappa \big) \tfrac{4\kappa^2h_1}{1+2\kappa h_1} \big]' }_{L^2_{t,x}} \\
&\qquad \lesssim \kappa^{-2} \norm{ \psi f }_{L^2_tH^{-1}_\kappa} \Big\{ \bnorm{ \tfrac{1}{g} + 2\kappa }_{L^\infty_{t,x}} \norm{ \psi h'_1 }_{L^2_{t,x}} \bnorm{ \tfrac{4\kappa^2}{1+2\kappa h_1} }_{L^\infty_{t,x}} \big( 1 + \bnorm{ \tfrac{2\kappa h_1}{1+2\kappa h_1} }_{L^\infty_{t,x}} \big) \\
&\qquad\qquad + \norm{ \psi g'}_{L^2_{t,x}} \bnorm{\tfrac{1}{g}}_{L^\infty_{t,x}}^2 \bnorm{ \tfrac{4\kappa^2h_1}{1+2\kappa h_1} }_{L^\infty_{t,x}}  
+ \bnorm{ \tfrac{1}{g} + 2\kappa }_{L^\infty_{t,x}} \bnorm{ \tfrac{4\kappa^2}{1+2\kappa h_1} }_{L^\infty_{t,x}} \norm{\psi h_1}_{L^2_{t,x}} \Big\}  \\
&\qquad\lesssim \kappa^{-1} \norm{\psi f}_{L^2_tH^{-1}_\kappa}  \big( \norm{u}_{L^\infty_tH^{-1}_\kappa} + \norm{u}_{\ls_\kappa} \big) .
\end{align*} 

It remains to estimate \eqref{dg LSk 1}.  We expand
\begin{align*}
&\int \tfrac{\phi}{g^2} \Big\{  dg|_u(\psi^2f)' + \kappa^{-1} R_0(2\kappa) (\psi^2f)' \Big\} \dx \\
&\qquad = \sum_{\ell + m \geq 1}(-1)^{\ell+m+1} \tr\big\{ \tfrac{\phi}{g^2} (R_0u)^\ell R_0  \psi^2 [\partial, \psi^2 f] (R_0u)^m R_0 \big\} \\
&\qquad = \sum_{\ell + m \geq 1} (-1)^{\ell+m} \tr\big\{ \psi^2f \big[ \partial, (R_0u)^\ell R_0 \tfrac{\phi}{g^2} (R_0u)^m R_0 \big] \big\} .
\end{align*}
In the last equality, we cycled the trace (i.e.\ $\tr(AB) = \tr(BA)$).  Now, we will distribute the derivative $\partial$ using the product rule $[\partial, AB] = [\partial, A]B + A[\partial,B]$.

First, let us consider the terms with $\ell+m=1$.  When the derivative lands on $u$, we use \eqref{comm} and \eqref{1/g est} to estimate:
\begin{align*}
&\int_{-T}^T \big| \tr\big\{ \psi^2 f R_0u'R_0 \tfrac{\phi}{g^2} R_0 \big\} \big| + \big| \tr\big\{ \psi^2 f R_0 \tfrac{\phi}{g^2} R_0u'R_0 \big\} \big| \dt \\
&\qquad\lesssim \bnorm{ \sqrt{R_0} \psi f \sqrt{R_0} }_{L^2_t\hs} \bnorm{ \sqrt{R_0} \psi u' \sqrt{R_0} }_{L^2_t\hs} \bnorm{ \sqrt{R_0}  \tfrac{\phi}{g^2} \sqrt{R_0} }_{L^\infty_t\op} \\
&\qquad\lesssim \kappa^{-1} \norm{\psi f}_{L^2_tH^{-1}_\kappa} \norm{u}_{\ls_\kappa} .
\end{align*}
When the derivative lands on $\tfrac{\phi}{g^2}$, we use \eqref{HS}, \eqref{1/g est}, and \eqref{k0} to estimate
\begin{equation}
\begin{aligned}
\bnorm{ \sqrt{R_0} (\tfrac{\phi}{g^2})' \sqrt{R_0} }_{L^2_t\op}
&\lesssim \kappa^{-\frac32} T^{\frac12} \bnorm{ (\tfrac{\phi}{g^2})' }_{L^\infty_tL^2_x} 
\lesssim \kappa^{-\frac{3}{2}} T^{\frac12} \big( \bnorm{ \tfrac{1}{g} }_{L^\infty_{t,x}}^2 + \bnorm{ \tfrac{1}{g} }_{L^\infty_{t,x}}^3 \norm{ g'}_{L^2_{t,x}} \big) \\
&\lesssim \kappa^{\frac{1}{2}} 
 T^{\frac12} \big( 1+ \norm{u}_{L^\infty_tH^{-1}_\kappa} \big)
\lesssim 1 .
\end{aligned}
\label{1/g est op}
\end{equation}
This yields
\begin{align*}
&\int_{-T}^T \big| \tr\big\{ \psi^2 f R_0uR_0 \big(\tfrac{\phi}{g^2}\big)' R_0 \big\} \big| + \big| \tr\big\{ \psi^2 f R_0 \big(\tfrac{\phi}{g^2}\big)' R_0uR_0 \big\} \big| \dt \\
&\qquad\lesssim \bnorm{ \sqrt{R_0} \psi^2 f \sqrt{R_0} }_{L^2_t\hs} \bnorm{ \sqrt{R_0} u \sqrt{R_0} }_{L^\infty_t\hs} \bnorm{ \sqrt{R_0}(\tfrac{\phi}{g^2})' \sqrt{R_0} }_{L^2_t\op} \\
&\qquad\lesssim \kappa^{-1} \norm{\psi f}_{L^2_tH^{-1}_\kappa} \norm{u}_{L^\infty_tH^{-1}_\kappa} .
\end{align*}

Finally, we turn to the terms with $\ell + m\geq 2$.  When the derivative lands on $u$ we use \eqref{LSk H1k 2}, and when the derivative lands on $\tfrac{\phi}{g^2}$ we use \eqref{1/g est op}.    
As $\ell+m\geq 2$, there are always at least two operators that are put in Hilbert--Schmidt norm:
\begin{align*}
&\int_{-T}^T \big| \tr\big\{ \psi^2f \big[ \partial, (R_0u)^\ell R_0 \tfrac{\phi}{g^2} (R_0u)^m R_0 \big] \big\} \big| \dt \\
&\qquad\lesssim \bnorm{\sqrt{R_0} \psi^2 f \sqrt{R_0} }_{L^2_t\hs} \Big\{ \bnorm{ \sqrt{R_0} \big( \tfrac{\phi}{g^2} \big)' \sqrt{R_0} }_{L^2_t\op} \bnorm{ \sqrt{R_0} u \sqrt{R_0} }_{L^\infty_t\hs}^{\ell+m} \\
&\qquad\qquad+ (\ell + m) \bnorm{ \sqrt{R_0} u' \sqrt{R_0} }_{L^2_t\op} \bnorm{ \sqrt{R_0} \tfrac{\phi}{g^2} \sqrt{R_0} }_{L^\infty_t\op} \bnorm{ \sqrt{R_0} u \sqrt{R_0} }_{L^\infty_t\hs}^{\ell+m-1}   \Big\} \\
&\qquad\lesssim \kappa^{-1} \norm{\psi f}_{L^2_tH^{-1}_\kappa} \big( \norm{u}_{L^\infty_tH^{-1}_\kappa} + \norm{u}_{\ls_\kappa} \big) (\ell+m+1) \big( \kappa^{-\frac{1}{2}} \norm{u}_{L^\infty_tH^{-1}_\kappa} \big)^{\ell+m-1} .
\end{align*}
Summing over $\ell + m \geq 2$ then yields
\begin{align*}
&\sum_{\ell + m \geq 2} \int_{-T}^T \big| \tr\big\{ \psi^2f \big[ \partial, (R_0u)^\ell R_0 \tfrac{\phi}{g^2} (R_0u)^m R_0 \big] \big\} \big| \dt  \lesssim \kappa^{-1} \norm{\psi f}_{L^2_t H^{-1}_\kappa} \big( \norm{u}_{L^\infty_tH^{-1}_\kappa} + \norm{u}_{\ls_\kappa} \big) .
\end{align*}
This was the final term that we needed to estimate, and thus concludes the proof of the lemma.
\end{proof}

\section{The local smoothing estimate}
\label{s:ls est}

In this section, we will prove our main local smoothing estimate for solutions to \eqref{gKdV}:
\begin{theorem}
\label{t:local smoothing}
Let $\kappa \geq 1$, and $u$ be a solution to 
\eqref{gKdV} which satisfies the bound \eqref{k0} uniformly in a time interval $[0,T]$ with $T \leq \kappa^{-2}$. 
For some $\eps > 0$, suppose that the coefficients $a_1,a_2,a_3,a_4$ obey the following bounds in $[0,T]$: 
\begin{gather}
\int \norm{\psi_z a_1}_{L^\infty_tH^1_x} \dz \leq \eps 
\quad\tx{or}\quad
\int \norm{\psi_z a_1}_{L^\infty_tW^{1,\infty}_x} \dz \leq \eps ,
\label{a}\\
\int \norm{\psi_z a_2}_{L^\infty_{t,x}} \dz \leq \eps ,
\label{b}\\
\int \norm{\psi_z a_3}_{L^2_tH^1_x} \dz \leq \eps 
\quad\tx{or}\quad
\int \norm{\psi_z a_3}_{L^2_tW^{1,\infty}_x} \dz \leq \eps ,
\label{c}\\
\norm{a_4}_{ L^\infty_tH^1_x} < \infty
\quad\tx{or}\quad
\norm{a_4}_{ L^\infty_tW^{1,\infty}_x} < \infty .
\label{d}
\end{gather}
Then the solution $u(t)$ to \eqref{gKdV} satisfies the local energy estimate
\begin{equation}
\norm{u}_{\ls_\kappa}^2 \leq 2\norm{u}_{C_tH^{-1}_\kappa}^2 + C(\eps+ (T\kappa^2)^{\frac14} + \kappa^{-2})  \big( \norm{u}_{L^\infty_tH^{-1}_\kappa}^2 + \norm{u}_{\ls_\kappa}^2 \big)  .
\label{LSk}
\end{equation}

\end{theorem}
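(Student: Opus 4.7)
The argument begins with the modified density-flux identity~\eqref{alpha micro 2}.  Fix $x_0\in\R$, multiply both sides by the switching function $\phi_{x_0}$ from~\eqref{switch fnc}, and integrate over $[-T,T]\times\R$.  Since $\phi_{x_0}'=\psi_{x_0}^2$, integration by parts in $x$ yields
\begin{equation*}
\biggl[\int \phi_{x_0}\rho\dx\biggr]_{t=-T}^{t=T} - \int_{-T}^{T}\!\!\int \psi_{x_0}^{2}\,j\dx\dt = \int_{-T}^{T}\!\!\int \phi_{x_0}\,d\rho|_u\bigl[(a_1u')'+a_2u^{2}+a_3u'+a_4u\bigr]\dx\dt.
\end{equation*}
The plan is to show that, modulo admissible error terms, the flux integral $-\int\!\!\int\psi_{x_0}^{2}j\dx\dt$ contains a positive principal piece proportional to $\norm{\psi_{x_0}u'}_{L^2_tH^{-1}_\kappa}^{2}$; rearranging and taking the supremum over $x_0$ then produces~\eqref{LSk}.

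The main computation is an algebraic expansion of $j$ from~\eqref{j}.  Plugging in the series~\eqref{g} for $g$ and using the identity $2\kappa R_0(2\kappa)u''=-2\kappa u+8\kappa^{3}R_0(2\kappa)u$, the part of $j$ linear in $u$ cancels exactly.  The surviving quadratic piece, after an integration by parts in $x$ and an application of the double-commutator bounds~\eqref{double comm}--\eqref{double comm1} to pass $R_0$ through the weight $\psi_{x_0}^{2}$, reduces to a constant multiple of $\norm{\psi_{x_0}u'}_{L^2_tH^{-1}_\kappa}^{2}$.  All remaining contributions of $-\int\!\!\int\psi_{x_0}^{2}j\dx\dt$ are absorbed into the error terms using the lemmas of Section~\ref{s:lsk}: the tail $g-\tfrac{1}{2\kappa}-h_1$ is handled by~\eqref{h2}, the analogous tail of $1/g$ by~\eqref{1/g est 2}, the cross term $h_1h_2$ by~\eqref{h1h2}, and the higher cumulants $h_\ell$ with $\ell\geq 3$ by~\eqref{h3}.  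Each such bound contributes at most $C\bigl((T\kappa^{2})^{1/4}+\kappa^{-2}\bigr)\bigl(\norm{u}_{L^\infty_tH^{-1}_\kappa}^{2}+\norm{u}_{\ls_\kappa}^{2}\bigr)$, matching the middle portion of~\eqref{LSk}.  Meanwhile, the boundary-in-time term is controlled by $\norm{\phi_{x_0}}_{L^\infty}\lesssim 1$, the nonnegativity of $\rho$, and the upper bound in~\eqref{alpha est}, yielding a contribution of size $\lesssim\kappa^{-1}\norm{u}_{C_tH^{-1}_\kappa}^{2}$; after accounting for the $\kappa$-factor in the principal flux normalization this becomes the $2\norm{u}_{C_tH^{-1}_\kappa}^{2}$ term in~\eqref{LSk}.

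For the new contributions on the right-hand side involving $a_j$, the approach is to decompose each coefficient via the continuous partition of unity $1=c^{-1}\int\psi_z^{2}\dz$ used throughout Section~\ref{s:lsk} and invoke the functional derivative estimates~\eqref{dg H-1} and~\eqref{dg LSk}.  For $(a_1u')'$, write $a_1u'=c^{-1}\int\psi_z^{2}a_1u'\dz$, apply~\eqref{dg LSk} (with $\psi$ replaced by $\psi_z$, by translation invariance of $R_0$) and $f=a_1u'$, and use the algebra bound $\norm{\psi_z a_1\cdot\psi_z u'}_{L^2_tH^{-1}_\kappa}\lesssim\norm{\psi_z a_1}_{L^\infty_tH^{1}}\norm{\psi_z u'}_{L^2_tH^{-1}_\kappa}$; integration in $z$ then extracts the small factor $\eps$ from~\eqref{a} (the alternative hypothesis is handled using the $W^{1,\infty}$ algebra bound from Section~\ref{s:prelim}).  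The terms $a_3u'$ and $a_4u$ are handled in parallel using~\eqref{c}/\eqref{d} together with~\eqref{dg LSk}/\eqref{dg H-1} respectively.  For $a_2u^{2}$, write it as $(a_2u)\cdot u$, absorb one copy of $u$ into $a_2$ to obtain a localized coefficient, and apply~\eqref{dg H-1} with~\eqref{b}.  Taken together, these four contributions produce the net error $C\eps\bigl(\norm{u}_{L^\infty_tH^{-1}_\kappa}^{2}+\norm{u}_{\ls_\kappa}^{2}\bigr)$ appearing in~\eqref{LSk}.

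The main obstacle is the flux expansion itself: carrying out the algebraic manipulations in detail, identifying the precise constant in the principal quadratic term $\norm{\psi_{x_0}u'}_{L^2_tH^{-1}_\kappa}^{2}$, and verifying that every cubic and higher-order correction falls within the scope of the $(T\kappa^{2})^{1/4}$ and $\kappa^{-2}$ smallness furnished by Section~\ref{s:lsk}, without requiring any smallness of the coefficients $a_j$ to absorb them.
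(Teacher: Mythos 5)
Your proposal follows the paper's strategy precisely: multiply the modified density–flux identity~\eqref{alpha micro 2} by the switch function $\phi_{x_0}$, integrate in space–time, isolate the coercive quadratic piece $\norm{\psi_{x_0}u'}_{L^2_tH^{-1}_\kappa}^2$ from $j_2$ using the identities for $h_1,h_2$ and the double-commutator bounds~\eqref{double comm}--\eqref{double comm1}, absorb the cubic-and-higher part of $j$ via~\eqref{1/g est 2},~\eqref{h2},~\eqref{h1h2},~\eqref{h3}, bound the boundary term by nonnegativity of $\rho$ and~\eqref{alpha est}, and dispatch the source terms with~\eqref{dg H-1} and~\eqref{dg LSk} together with the continuous partition of unity and hypotheses~\eqref{a}--\eqref{d}. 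This is exactly the route the paper takes.

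One small wrinkle: for the term $a_3 u'$ you propose applying~\eqref{dg LSk}, but that estimate acts on inputs of the form $(\psi^2 f)'$ with a \emph{derivative outside}; $a_3 u'$ does not have that form, so you would first need to write $a_3 u' = (a_3 u)' - a_3' u$, treat the first piece with~\eqref{dg LSk} (taking $f = \psi_z a_3 u$ under the partition of unity) and the second with~\eqref{dg H-1}. The paper instead uses~\eqref{dg H-1} directly for $a_3 u'$, bounding $\norm{a_3 u'}_{L^1_tH^{-1}_\kappa}\lesssim\eps\norm{u}_{\ls_\kappa}$ via the partition and the $L^2_t$ control from~\eqref{c}, which avoids the extra integration by parts. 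Both routes deliver an acceptable error; yours is just slightly longer. Everything else matches.
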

For the definition of $\psi_{x_0}$ and $\ls_\kappa$, see~\eqref{LSk def}.
The rest of the section is devoted to the proof of this theorem. 
\begin{proof}[Proof of Theorem \ref{t:local smoothing}]
Recall that solutions $u(t)$ to \eqref{gKdV} obey the approximate conservation law \eqref{alpha micro 2}.  In order to prove our local smoothing estimate, we multiply \eqref{alpha micro 2} by the smooth step function $\phi_{x_0}$ defined in \eqref{switch fnc} and integrate in space and time:
\begin{align*}
\int_{-T}^T \int \psi^2_{x_0} j(t,x)\dx\dt = & \ \int \phi_{x_0} [\rho(T,x) - \rho(-T,x)]\dx   \\
& - \int_{-T}^T \int \phi_{x_0}\, d\rho|_u\big[ (a_1u')' + a_2u^2 + a_3u' + a_4u \big] \dx\dt .
\end{align*}
Ultimately, we seek to identify the left-hand side (respectively, the first term on the right) above
with the left-hand side (respectively, the first term on the right) of \eqref{LSk}, modulo acceptable errors.

We will start with the easiest term, which is the first term on the right. Recall that $\rho(t,x)\geq 0$, and so $\norm{\rho}_{L^1} = \alpha(\kappa,u)$.  Combining this with \eqref{alpha est}, leads to the bound
\begin{equation}\label{rhs1}
\kappa \bigg| \int \phi_{x_0} [\rho(T,x) - \rho(-T,x)]\dx \bigg| \leq 2 \norm{u}_{C_tH^{-1}_\kappa}^2 .
\end{equation}

The rest of the proof is organized as follows.  For the left-hand side, we will show that 
\begin{equation}\label{lhs}
\kappa \int_{-T}^T \int \psi^2_{x_0} j(t,x) =
 -\tfrac{3}{2} \bnorm{ \psi_{x_0} u' }_{L^2_tH^{-1}_\kappa}^2 \dx \dt + \Err_0,
\end{equation}
where $\Err_0$ satisfies
\[
|\Err_0| \lesssim  ({(T\kappa^2})^{\frac14}+\kappa^{-2})  \big( \norm{u}_{L^\infty_tH^{-1}_\kappa}^2 + \norm{u}_{\ls_\kappa}^2 \big).
\]
The last task will then be to estimate the
second term on the right, and show that
\begin{equation}\label{rhs2}
 \kappa \int_{-T}^T \int \phi_{x_0} \, d\rho|_u \big[ (a_1u')' + a_2u^2 + a_3u' + a_4u \big] \dx\dt = \Err_1  , 
\end{equation}
where
\[
|\Err_1| \lesssim (\eps+\kappa^{-2})  \big( \norm{u}_{L^\infty_tH^{-1}_\kappa}^2 + \norm{u}_{\ls_\kappa}^2 \big)  .
\]
The desired estimate \eqref{LSk} is then obtained 
by combining \eqref{rhs1}, \eqref{lhs}, and \eqref{rhs2} and taking the supremum over $x_0 \in \R$.  

\bigskip

The remainder of this section is dedicated to proving \eqref{lhs} and \eqref{rhs2}, and we begin with \eqref{lhs}.  To leading order, $j$ is quadratic in $u$.  Specifically, if we insert the series \eqref{g} for $g$, then the quadratic terms of $j$ are:
\begin{equation*}
j_2 := 8\kappa^4 h_2 - h_1 (16\kappa^5h_1 + 4\kappa^2u) - 6\kappa R_0(2\kappa)[u^2] .
\end{equation*}
It is then natural to expect the leading contribution to come from $j_2$, while the cubic and higher contributions coming from $j-j_2$ to be perturbative. We will realize these goals in the next two lemmas. In this first lemma, we examine the contribution of $j_2$ and show how it produces the local smoothing norm of $u$:
\begin{lemma}
Assume that $\kappa \geq 1$ and $u$ satisfies \eqref{k0} uniformly in $[0,T]$. Then we have
\begin{align*}
\bnorm{ \psi_{x_0} u' }_{L^2_tH^{-1}_\kappa}^2
= & - \tfrac{2}{3} \kappa \int_{-T}^T\int_{-\infty}^\infty \psi_{x_0}^2 j_2 \dx\dt 
+ O\Big(\kappa^{-2} \big( \norm{u}_{L^\infty_t{H^{-1}_\kappa}}^2 + \norm{u}_{\ls_\kappa}^2 \big) \Big)
\end{align*}
uniformly for $x_0\in\R$.
\end{lemma}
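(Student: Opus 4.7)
The plan is to substitute $h_1 = -\kappa^{-1} R_0(2\kappa) u$ explicitly, which transforms $j_2$ into
\[
j_2 = 8\kappa^4 h_2 - 16\kappa^3 \bigl(R_0(2\kappa) u\bigr)^2 + 4\kappa\, u R_0(2\kappa) u - 6\kappa R_0(2\kappa)[u^2].
\]
A quick sanity check: all four contributions cancel pointwise when $u$ is constant (using $\langle \del_x, R_0^3 \del_x\rangle = \tfrac{3}{16\kappa^5}$ for the $h_2$ term), consistent with the claim that $\int \psi_{x_0}^2 j_2 \dx$ captures $\bnorm{\psi_{x_0} u'}_{H^{-1}_\kappa}^2$ to leading order.

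The strategy is to rewrite each of the four summands as a bilinear form in the localized profile $v := \psi_{x_0} u$, modulo acceptable errors. For the three explicit terms, I commute $\psi_{x_0}^2$ past the $R_0(2\kappa)$ factors using \eqref{comm}; the resulting commutator corrections carry prefactors of $\kappa^{-2}$ and are bounded via \eqref{HS}. For the $h_2$ term, trace cyclicity gives $\int \psi_{x_0}^2 h_2 \dx = \tr\{R_0 \psi_{x_0}^2 R_0 \cdot u R_0 u\}$, and the double commutator estimate \eqref{double comm} lets me replace $R_0 \psi_{x_0}^2 R_0$ by $\psi_{x_0} R_0^2 \psi_{x_0}$ with an $O(\kappa^{-2})$ error in the $H^{-2}_\kappa \to H^2_\kappa$ norm. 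Cycling the trace once more produces the bilinear form $\tr\{R_0^2 v R_0 v\}$.

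With everything reduced to quadratic forms in $v$, the key algebraic step is to verify that the coefficients $(8\kappa^4, -16\kappa^3, 4\kappa, -6\kappa)$ conspire to yield precisely $-\tfrac{3}{2\kappa}\bnorm{v'}_{H^{-1}_\kappa}^2$ to leading order. This is carried out most efficiently on the Fourier side via Plancherel, or equivalently by the identity $R_0(\kappa)^2 = -\tfrac{1}{2\kappa}\partial_\kappa R_0(\kappa)$ which evaluates $\tr\{R_0^2 v R_0 v\}$ in closed form. The discrepancy between $\bnorm{(\psi_{x_0} u)'}_{H^{-1}_\kappa}^2$ and $\bnorm{\psi_{x_0} u'}_{H^{-1}_\kappa}^2$ consists only of $\psi_{x_0}' u$-type corrections, which are absorbed into the error via \eqref{LSk 4 new}--\eqref{LSk 3 new}.

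The main obstacle will be the Fourier-side bookkeeping: verifying that the four summands collapse to exactly $-\tfrac{3}{2\kappa}\bnorm{v'}_{H^{-1}_\kappa}^2$ with the correct numerical factor $\tfrac{3}{2}$, and that every remainder is genuinely of the form $O(\kappa^{-2}(\bnorm{u}_{L^\infty_t H^{-1}_\kappa}^2 + \bnorm{u}_{\ls_\kappa}^2))$ after integration in time. The local smoothing norm enters only through terms involving $\psi_{x_0} u'$ that are generated by the commutations, while the bulk of the error involves $\bnorm{u}_{L^\infty_t H^{-1}_\kappa}^2$, whose smallness is delivered by the $\kappa^{-2}$ prefactors furnished by the commutator lemmas.
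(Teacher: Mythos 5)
Your high-level strategy (expand $j_2$ in $u$, commute $\psi_{x_0}$ inward using \eqref{comm}--\eqref{double comm}, identify a bilinear form) is in the same spirit as the paper, but your choice of target creates a genuine gap. You aim to reduce $\int\psi_{x_0}^2 j_2$ to $-\tfrac{3}{2\kappa}\|v'\|^2_{H^{-1}_\kappa}$ with $v = \psi_{x_0}u$, and then convert $\|(\psi_{x_0}u)'\|^2_{L^2_tH^{-1}_\kappa}$ into the desired $\|\psi_{x_0}u'\|^2_{L^2_tH^{-1}_\kappa}$. That conversion produces the cross term
\[
2\Re\int_{-T}^T \langle \psi_{x_0}u',\, \psi_{x_0}'u\rangle_{H^{-1}_\kappa}\,dt ,
\]
which by Cauchy--Schwarz is controlled by $\|u\|_{\ls_\kappa}\cdot\|\psi_{x_0}'u\|_{L^2_tH^{-1}_\kappa}$. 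The second factor has no derivative on $u$, so the best available bounds (either $T^{1/2}\|u\|_{L^\infty_tH^{-1}_\kappa}$ with $T\leq\kappa^{-2}$, or $\kappa^{-1}\|\psi' u\|_{L^2_{t,x}}$ combined with \eqref{LSk 5}) give only $O(\kappa^{-1})$, not $O(\kappa^{-2})$. Thus the cross term is $O\big(\kappa^{-1}(\|u\|^2_{\ls_\kappa}+\|u\|^2_{L^\infty_tH^{-1}_\kappa})\big)$, one power of $\kappa$ too large for the stated error. The estimates \eqref{LSk 4 new}--\eqref{LSk 3 new} you cite live in $H^{-2}_\kappa$ and do not resolve this $H^{-1}_\kappa$-level discrepancy.

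The paper avoids this entirely by \emph{not} passing through $(\psi u)'$. It first rewrites $-2\kappa j_2$ using the exact identities \eqref{h1} (in particular $16\kappa^5 h_2 = 3u^2 - 3\kappa^2(h_1'')^2 - 20\kappa^4[(h_1')^2-(h_1^2)''] + 4\kappa^4\partial^2R_0[(h_1')^2+2(h_1^2)'']$) to land on the two squares $(h_1'')^2$ and $(h_1')^2$. When $\psi$ is commuted through these, the double-commutator estimates \eqref{double comm} and \eqref{double comm1} produce $\psi u'$ (not $(\psi u)'$) as the argument, plus a term $\langle(\psi u')',R_0^2(\psi u')'\rangle$ that cancels identically between the two contributions. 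Relatedly, your handling of the $h_2$ term via trace cyclicity yields $\tr\{R_0^2(\psi u)R_0(\psi u)\}$, whose Fourier representation involves a convolution multiplier $\int\frac{d\eta}{((\eta+\zeta)^2+4\kappa^2)^2(\eta^2+4\kappa^2)}$ rather than a rational function of $\zeta$; without the algebraic identity for $h_2$, it is not clear the coefficients conspire cleanly on the Fourier side as you hope. To fix your argument, you would need to either incorporate the paper's identities for $h_2$ and $16\kappa^5h_1+4\kappa^2u$ so that the commutations land on $\psi u'$, or find a genuine cancellation in the $\langle\psi u',\psi' u\rangle_{H^{-1}_\kappa}$ cross term, which does not appear to be present.
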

\begin{proof}
Evidently, we need $\kappa j_2$ to be $O(1)$, and so there must be some cancellation for the terms that are $O(\kappa)$ and higher.  In order to exhibit this cancellation, we use the identities (cf.~\cite{Bringmann2021}*{Lem.~2.5})
\begin{gather}
16\kappa^5 h_2 = 3u^2 - 3\kappa^2 (h''_1)^2 - 20\kappa^4 \big[ (h'_1)^2 - (h_1^2)'' \big] + 4\kappa^4 \partial^2 R_0(2\kappa) \big[ (h'_1)^2 + 2(h_1^2)'' \big] ,
\nonumber \\
16\kappa^5 h_1 + 4\kappa^2 u = 4\kappa^3h''_1
\label{h1}
\end{gather}
to write
\begin{align*}
-2\kappa j_2 
= 3\kappa^2 (h''_1)^2 + 12\kappa^4 (h'_1)^2 - 3\big[ u^2 - 4\kappa^2 R_0(2\kappa)u^2 \big] - 4\kappa^4 \partial^2 R_0(2\kappa) \big[ (h'_1)^2 + 2(h_1^2)'' \big] .
\end{align*}

We multiply this by $\psi^2_{x_0}$ and integrate in space and time.  Working from left to right, we claim that
\begin{gather}
\begin{aligned}
\int_{-T}^T \langle \psi^2, 3\kappa^2 (h''_1)^2 \rangle \dt
={}& 3 \int_{-T}^T \langle (\psi u')', R_0^2 (\psi u')' \rangle \dt + O\big( \kappa^{-2}  \norm{u}_{\ls_\kappa}^2 \big)  , 
\end{aligned}
\label{j2 1} \\
\begin{aligned}
\int_{-T}^T \langle \psi^2, 12\kappa^4 (h'_1)^2 \rangle \dt
={}& 3 \int_{-T}^T \langle \psi u', R_0 \psi u' \rangle - \langle (\psi u')', R_0^2 (\psi u')' \rangle 
 \dt \\
&+ O\big( \kappa^{-2}  \norm{u}_{\ls_\kappa}^2 \big) , 
\end{aligned}
\label{j2 2}\\
\int_{-T}^T \langle \psi^2, (\tx{rest of }{-2\kappa j_2}) \rangle \dt
= O\Big( \kappa^{-2} \big( \norm{u}_{L^\infty_tH^{-1}_\kappa}^2 + \norm{u}_{\ls_\kappa}^2 \big) \Big)
\label{j2 3}
\end{gather}
uniformly for $x_0\in\R$, where $\psi = \psi_{x_0}$ and $R_0 = R_0(2\kappa)$.  Adding these together, this would yield
\begin{align*}
\int_{-T}^T -2\kappa \langle \psi^2, j_2 \rangle \dt
&= 3\int_{-T}^T \langle \psi u', R_0 \psi u' \rangle \dt + O\Big( \kappa^{-2} \big( \norm{u}_{L^\infty_tH^{-1}_\kappa}^2 + \norm{u}_{\ls_\kappa}^2 \big) \Big) .
\end{align*}
The first term on the RHS is exactly $\snorm{\psi u'}_{L^2_tH^{-1}_\kappa}^2$, and so this would finish the proof.

Let us start with ~\eqref{j2 1}.  As $h_1 = -\kappa^{-1} R_0(2\kappa)u$, we have
\begin{equation*}
\kappa^2 \langle \psi^2, (h''_1)^2\rangle
= \langle \psi R_0 u'', \psi R_0u'' \rangle .
\end{equation*}
Comparing this with \eqref{j2 1}, we see that each $\psi R_0 \partial$ above should be replaced by $R_0 \partial \psi$; this is exactly the purpose of our `double commutator' estimates.
By \eqref{double comm1}, we have
\begin{align*}
&\int_{-T}^T \big| \langle \psi R_0 u'', \psi R_0u'' \rangle - \langle R_0(\psi u')', R_0(\psi u')' \rangle \big| \dt \\
&\qquad = \int_{-T}^T \big| \langle \psi u' , \tfrac{1}{\psi} \big( R_0\partial \psi^2 R_0 \partial- \psi R_0^2 \partial^2 \psi \big) \tfrac{1}{\psi} \cdot \psi u' \rangle \big| \dt \\
&\qquad \lesssim  \|\psi u'\|_{L^2_tH^{-2}_\kappa}^2
\\
&\qquad \lesssim   \kappa^{-2} \norm{u}_{\ls_\kappa}^2 .
\end{align*}
This proves \eqref{j2 1}.
\medskip

The proof of \eqref{j2 2} proceeds in a similar way.  We write
\begin{equation*}
\langle \psi^2, 12\kappa^4 (h'_1)^2 \rangle
= 12\kappa^2 \langle \psi R_0 u', \psi R_0 u' \rangle .
\end{equation*}
First, we use \eqref{double comm}  to replace $\psi R_0$ by $R_0\psi$ above:
\begin{align*}
&12\kappa^2 \int_{-T}^T \big| \langle \psi R_0 u', \psi R_0u' \rangle - \langle R_0\psi u', R_0\psi u' \rangle \big| \dt \\
&\qquad = 12\kappa^2 \int_{-T}^T \big| \langle \psi u' , \tfrac{1}{\psi} \big( R_0\psi^2 R_0 - \psi R_0^2 \psi \big) \tfrac{1}{\psi} \cdot \psi u' \rangle \big| \dt 
\\
&\qquad \lesssim 
\norm{\psi u'}_{L^2_t H^{-2}_\kappa}^2
\\
&\qquad \lesssim    \kappa^{-2} \norm{u}_{\ls_\kappa}^2.
\end{align*}
Next, we use the identity $4\kappa^2 R_0 = 1 + \partial^2 R_0 $ to write
\begin{equation*}
12\kappa^2 \langle R_0\psi u', R_0\psi u' \rangle
= 3 \langle \psi u', R_0 \psi u'\rangle - 3 \langle (\psi u')' , R_0^2 (\psi u')' \rangle ,
\end{equation*}
and then \eqref{j2 2} follows.

Lastly, we turn to \eqref{j2 3}.  There are two terms remaining in $-2\kappa j_2$ which we want to show make a negligible contribution.  For the first term, we use the identity $1 - 4\kappa^2 R_0 = - \partial^2 R_0 $ to write
\begin{equation*}
\langle \psi^2 , 3\big[ u^2 - 4\kappa^2 R_0 u^2 \big] \rangle
= 3 \langle \psi^2 , R_0 (u^2)'' \rangle
= 3 \langle \tfrac{1}{\psi^2} R_0 (\psi^2)'' , \psi^2 u^2 \rangle .
\end{equation*}
Therefore, by \eqref{comm} and \eqref{LSk 4 new}--\eqref{LSk 2 new}, for $T \leq \kappa^{-2}$ we have
\begin{align*}
\int_{-T}^T \big| \langle \psi^2 , 3\big[ u^2 - 4\kappa^2 R_0 u^2 \big] \rangle \big| \dt
&\leq 3 \bnorm{\tfrac{1}{\psi^2} R_0 (\psi^2)'' }_{L^\infty_x} \norm{ \psi u}_{L^2_{t,x}}^2 \\
&\lesssim \kappa^{-2} \bnorm{ (-\partial^2 + 4\kappa^2) (\psi u) }^2_{L^2_tH^{-2}_\kappa} \\
&\lesssim \kappa^{-2} \big( \norm{u}_{L^\infty_t{H^{-1}_\kappa}}^2 + \norm{u}_{\ls_\kappa}^2 \big) .
\end{align*}

For the last term of $-2\kappa j_2$, we write
\begin{equation*}
\langle \psi^2 , 4\kappa^4 \partial^2 R_0 [ (h'_1)^2 + 2 (h_1^2 )'' \big] \rangle
= 4\kappa^2 \langle R_0 (\psi^2)'', (R_0 u')^2 \rangle + 4\kappa^2 \langle R_0 (\psi^2)^{(4)} , (R_0 u)^2 \rangle ,
\end{equation*}
in order to estimate
\begin{align*}
&\int_{-T}^T \big| \langle \psi^2 , 4\kappa^4 \partial^2 R_0 [ (h'_1)^2 + 2 (h_1^2 )'' \big] \rangle \big| \dt \\
&\qquad \lesssim \kappa^2 \Big\{ \bnorm{\tfrac{1}{\psi^2} R_0 (\psi^2)'' }_{L^\infty_x} \bnorm{ \psi R_0 \tfrac{1}{\psi} \cdot \psi u'}_{L^2_{t,x}}^2 + \bnorm{\tfrac{1}{\psi^2} R_0 (\psi^2)^{(4)} }_{L^\infty_x} \bnorm{ \psi R_0 \tfrac{1}{\psi} \cdot \psi u }_{L^2_{t,x}}^2 \Big\} \\
&\qquad \lesssim \kappa^2 \big\{ \kappa^{-2} \norm{ \psi u'}_{L^2_t H^{-2}_\kappa}^2 +  \kappa^{-2} \norm{ \psi u }_{L^2_t H^{-2}_\kappa}^2 \big\}
\lesssim \kappa^{-2} \big( \norm{u}_{L^\infty_t{H^{-1}_\kappa}}^2 + \norm{u}_{\ls_\kappa}^2 \big) .
\end{align*}
Altogether, this proves \eqref{j2 3}.
\end{proof}

In the next lemma, we show that the cubic and higher order terms of $j$ make a negligible contribution:
\begin{lemma}
Assume that $\kappa \geq 1$ and $u$ satisfies \eqref{k0} uniformly in $[0,T]$ with $T\leq\kappa^{-2}$. Then we have
\begin{equation}
\bigg| \int_{-T}^T \int_{-\infty}^\infty \psi^2_{x_0} (j-j_2) \dx \dt \bigg|
\lesssim \kappa^{-1} (T\kappa^2)^\frac14  
\big( \norm{u}_{L^\infty_t{H^{-1}_\kappa}}^2 + \norm{u}_{\ls_\kappa}^2 \!\big)
\label{lem 4p3}
\end{equation}
uniformly in $x_0 \in \R$.
\end{lemma}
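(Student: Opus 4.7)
My plan is to derive an explicit algebraic decomposition of $j - j_2$ into finitely many pieces, each cubic or higher in $u$, and then estimate each piece using the preliminary lemmas from Section~\ref{s:lsk}.

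First I would combine the identities $R_0(2\kappa) u'' = 4\kappa^2 R_0(2\kappa) u - u$ and $h_1 = -\kappa^{-1} R_0(2\kappa) u$ (which yield $2\kappa R_0(2\kappa) u'' = -8\kappa^4 h_1 - 2\kappa u$) with $16\kappa^5 h_1 + 4\kappa^2 u = 4\kappa^3 h''_1$ from \eqref{h1} to obtain, upon setting $F := \frac{1}{g} - 2\kappa + \frac{4\kappa^2 h_1}{1+2\kappa h_1}$,
\[
j = -\frac{4\kappa^3 h_1 h''_1}{1+2\kappa h_1} + (u - 2\kappa^2) F - 6\kappa R_0[u^2].
\]
Subtracting $j_2$ then gives the clean identity
\[
j - j_2 = \frac{8\kappa^4 h_1^2 h''_1}{1+2\kappa h_1} + (u-2\kappa^2) F - 8\kappa^4 h_2 .
\]

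Next, I would use the factorization $F = -\frac{2\kappa \wt{g}}{g(1+2\kappa h_1)}$, where $\wt{g} := g - \frac{1}{2\kappa} - h_1 = \sum_{\ell\geq 2} h_\ell$, and expand $8\kappa^4 h_2 \cdot g(1 + 2\kappa h_1)$ to rewrite the last two terms of $j - j_2$ as
\[
(u-2\kappa^2) F - 8\kappa^4 h_2 = \frac{-2\kappa u \wt{g} + 4\kappa^3 \sum_{\ell\geq 3} h_\ell - 16\kappa^4 h_1 h_2 - 8\kappa^4 h_2 \wt{g} - 16\kappa^5 h_1^2 h_2 - 16\kappa^5 h_1 h_2 \wt{g}}{g(1+2\kappa h_1)},
\]
in which every numerator term is cubic or higher in $u$. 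Noting the pointwise bound $\norm{1/(g(1+2\kappa h_1))}_{L^\infty_{t,x}}\lesssim \kappa$ (which follows from \eqref{1/g est}, the embedding $\norm{f}_{L^\infty}\lesssim \kappa^{-1/2}\norm{f}_{H^1_\kappa}$, \eqref{h1 4}, and the size condition \eqref{k0}), each numerator term is handled by a distinct preliminary estimate: the sum of $h_\ell$ for $\ell \geq 3$ by \eqref{h3}; the product $h_1 h_2$ by \eqref{h1h2}; the $h_2 \wt{g}$ piece by Cauchy--Schwarz combined with \eqref{h2}; the $u \wt{g}$ piece via the $H^1_\kappa$--$H^{-1}_\kappa$ pairing $|\int \psi^2 u \wt{g}\,\dx|\leq \norm{\psi^2 \wt{g}}_{H^1_\kappa}\norm{u}_{H^{-1}_\kappa}$ followed by H\"older in time and \eqref{h2}; and the quartic pieces involving $h_1^2$ via the pointwise bound $\norm{h_1}_{L^\infty_{t,x}} \lesssim \kappa^{-5/2}\norm{u}_{L^\infty_t H^{-1}_\kappa}$. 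The remaining $\frac{8\kappa^4 h_1^2 h''_1}{1+2\kappa h_1}$ piece is handled in the same spirit, combining this pointwise bound on $h_1$ with \eqref{h1 4} and \eqref{h1 3}.

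Finally, I would collect these estimates and use $T \leq \kappa^{-2}$ (which upgrades the $(T\kappa^2)^{3/4}$ factors from \eqref{h3} and \eqref{h1h2} to the required $(T\kappa^2)^{1/4}$) together with the small-data consequence $\norm{u}_{L^\infty_t H^{-1}_\kappa} \lesssim \kappa^{1/2}$ of \eqref{k0} (applied via AM--GM) to absorb excess powers of $\norm{u}_{L^\infty_tH^{-1}_\kappa}$ into the target quadratic expression $\norm{u}_{L^\infty_tH^{-1}_\kappa}^2 + \norm{u}_{\ls_\kappa}^2$. The main obstacle is the $-2\kappa^2 F$ contribution: a direct application of \eqref{1/g est 2} falls short of the target by a factor of $\kappa^{1/2}$, so the factorization above is essential for exhibiting the cancellation of the leading quadratic term of $-2\kappa^2 F$ against the separate $-8\kappa^4 h_2$ piece, which is what supplies the missing power through the small-data bound.
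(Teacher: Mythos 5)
Your algebraic decomposition of $j-j_2$ is correct (I verified the identity, including the cancellation of the $4\kappa^3 h_2$ terms), and it is essentially a further-expanded version of the paper's three-way split into \eqref{j3 1}--\eqref{j3 3}. However, there is a concrete error that breaks several of your estimates.

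\textbf{The pointwise bound on $h_1$ is wrong by a power of $\kappa$.} Since $h_1 = -\kappa^{-1}R_0(2\kappa)u$ and $R_0(2\kappa):H^{-1}_\kappa\to H^1_\kappa$ is an isometry, the embedding $\norm{f}_{L^\infty}\lesssim\kappa^{-1/2}\norm{f}_{H^1_\kappa}$ gives
\[
\norm{h_1}_{L^\infty}\lesssim\kappa^{-1}\cdot\kappa^{-1/2}\norm{R_0(2\kappa)u}_{H^1_\kappa}=\kappa^{-3/2}\norm{u}_{H^{-1}_\kappa},
\]
not the $\kappa^{-5/2}$ you state. (The paper itself uses $\norm{2\kappa h_1}_{L^\infty}\lesssim\kappa^{-1/2}\norm{u}_{H^{-1}_\kappa}$ right before \eqref{h1 4}.) With the corrected exponent, the quartic terms where you propose to peel off $h_1^2$ in $L^\infty$ no longer close. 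For example, in $8\kappa^4 h_1^2h_1''/(1+2\kappa h_1)$: using $\norm{h_1}_{L^\infty}^2\lesssim\kappa^{-3}\norm{u}_{L^\infty_tH^{-1}_\kappa}^2$ together with an $L^1$ bound on $\psi^2 h_1''$ (obtained from \eqref{h1 3} by Cauchy--Schwarz, which costs a factor $T^{1/2}$) yields, after absorbing powers via \eqref{k0}, a bound proportional to $\kappa^{-1/2}(T\kappa^2)^{1/2}\big(\norm{u}_{L^\infty_tH^{-1}_\kappa}^2+\norm{u}_{\ls_\kappa}^2\big)$, which exceeds the target $\kappa^{-1}(T\kappa^2)^{1/4}$ by a factor of $\kappa^{1/2}(T\kappa^2)^{1/4}$ (unbounded when $T\sim\kappa^{-2}$). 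The paper avoids this by splitting the two copies of $h_1$ asymmetrically: one is bounded in $L^\infty_x$ via the $H^1_\kappa$-embedding, and the other is bounded via the localized estimate \eqref{h1 1}, i.e.\ $\norm{\psi h_1}_{L^2_{t,x}}\lesssim\kappa^{-2}\norm{u}_{L^2_tH^{-1}_\kappa}$, which is stronger than using $L^\infty$ plus Cauchy--Schwarz in $x$ by exactly the missing $\kappa^{-1/2}$. The same issue affects your treatment of $16\kappa^5 h_1^2h_2$: pairing one $h_1$ in $L^\infty$ with the already-established \eqref{h1h2} for $h_1 h_2$ works, but two $L^\infty$-bounds on $h_1$ against an $L^1$ bound on $\psi^2 h_2$ do not close.

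A secondary issue: for the $-2\kappa u\widetilde{g}/(g(1+2\kappa h_1))$ piece, the $H^1_\kappa$--$H^{-1}_\kappa$ pairing requires the entire weight $\psi^2\widetilde{g}/(g(1+2\kappa h_1))$ to lie in $H^1_\kappa$; the $L^\infty$ bound on $1/(g(1+2\kappa h_1))$ alone is not enough, since $u$ is a distribution and the absolute value of the product is not integrable. You would need a $W^{1,\infty}$-type bound on the reciprocal factor, which requires additional work and is not established in the preliminaries. The paper sidesteps this by using \eqref{h1} to convert the raw $u$ factor in $j$ into $h_1''$ before ever introducing $F$, so the resulting term $\kappa F\,h_1''$ can be handled by a pointwise--$L^2$--$L^2$ split against the already-established \eqref{1/g est 2} and \eqref{h1 3}.
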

\begin{proof}
In order to exhibit cancellation in $j-j_2$, we insert the series \eqref{g} for $g$ into the expression~\eqref{j} for $j$:
\begin{align}
&j-j_2
\nonumber\\
&={} \tfrac{1}{g}( 4\kappa^3 h_1 + u ) +2\kappa R_0(2\kappa) u'' + h_1(16\kappa^5 h_1 + 4\kappa^2 u) 
\label{j3 1} \\
&\qquad + (\tfrac{1}{g} - 2\kappa) 4\kappa^3 h_2 
\label{j3 2} \\
&\qquad + 4\kappa^3 \tfrac{1}{g} \sum_{\ell\geq 3} h_\ell .
\label{j3 3}
\end{align}
We will estimate the contributions \eqref{j3 1}--\eqref{j3 3} one at a time.

Let us start with \eqref{j3 1}.  Using the identity \eqref{h1}, we write
\begin{align*}
\eqref{j3 1}
&= \kappa \big( \tfrac{1}{g} - 2\kappa + 4\kappa^2 h_1 \big) h''_1 \\
&= \kappa \big( \tfrac{1}{g} - 2\kappa + \tfrac{4\kappa^2 h_1}{1 + 2\kappa h_1} \big) h''_1 + \tfrac{8\kappa^4}{1+2\kappa h_1} h_1^2 h''_1 .
\end{align*}
For the first term on the RHS, we use \eqref{1/g est 2} and \eqref{h1 3} to bound:
\begin{align*}
&\bigg| \int_{-T}^T \int_{-\infty}^\infty \psi^2 \kappa \big( \tfrac{1}{g} - 2\kappa + \tfrac{4\kappa^2 h_1}{1 + 2\kappa h_1} \big) h''_1 \dx\dt \bigg| \\
&\qquad\lesssim \kappa \bnorm{ \psi \big( \tfrac{1}{g} - 2\kappa + \tfrac{4\kappa^2 h_1}{1 + 2\kappa h_1} \big) }_{L^2_{t,x}} \norm{ \psi h''_1 }_{L^2_{t,x}} \\
&\qquad \lesssim \kappa^{-\frac32} (T \kappa^2)^{\frac14} \norm{u}_{L^\infty_t{H^{-1}_\kappa}}\big( \norm{u}_{L^\infty_t{H^{-1}_\kappa}}^2 + \norm{u}_{\ls_\kappa}^2 \big) .
\end{align*}
For the second term, we use \eqref{h1 4}, \eqref{h1 1}, and \eqref{h1 3} to bound:
\begin{align*}
\bigg| \int_{-T}^T \int_{-\infty}^\infty \psi^2 \tfrac{8\kappa^4}{1+2\kappa h_1} h_1^2 h''_1 \dx\dt \bigg| 
&
\lesssim \kappa^{\frac72} \bnorm{ \tfrac{1}{1+2\kappa h_1} }_{L^\infty_{t,x}} \norm{h_1}_{L^\infty_tH^1_\kappa} \norm{\psi h_1}_{L^2_{t,x}} \norm{\psi h''_1}_{L^2_{t,x}}
 \\
&
\lesssim \kappa^{-\frac12} T^{\frac12} \norm{u}_{L^\infty_tH^{-1}_\kappa}^2 \big( \norm{u}_{L^\infty_t H^{-1}_\kappa} + \norm{u}_{\ls_\kappa} \big)
 \\
&
\lesssim \kappa^{-\frac32} (T\kappa^2)^{\frac12} \norm{u}_{L^\infty_tH^{-1}_\kappa} \big( \norm{u}_{L^\infty_t H^{-1}_\kappa}^2 + \norm{u}_{\ls_\kappa}^2 \big) .
\end{align*}

Next, we turn to \eqref{j3 2}.  We write
\begin{equation*}
\eqref{j3 2}
= 4\kappa^3 \big( \tfrac{1}{g} - 2\kappa + \tfrac{4\kappa^2 h_1}{1+2\kappa h_1} \big) h_2 - \tfrac{16\kappa^5}{1+2\kappa h_1} h_1h_2 .
\end{equation*}
For the first term on the RHS, we use \eqref{1/g est 2} and \eqref{h2} to estimate
\begin{align*}
&\bigg| \int_{-T}^T \int_{-\infty}^\infty \psi^2 4\kappa^3 \big( \tfrac{1}{g} - 2\kappa + \tfrac{4\kappa^2 h_1}{1+2\kappa h_1} \big) h_2 \dx\dt \bigg| \\
&\qquad\lesssim \kappa^3 \bnorm{ \psi \big(\tfrac{1}{g} - 2\kappa + \tfrac{4\kappa^2 h_1}{1+2\kappa h_1} \big) }_{L^2_{t,x}} \norm{ \psi h_2 }_{L^2_{t,x}} \\
&\qquad\lesssim 
\kappa^{-2} (T \kappa^2)^{\frac12}\norm{u}_{L^\infty_t{H^{-1}_\kappa}}^2 \big( \norm{u}_{L^\infty_t{H^{-1}_\kappa}}^2 + \norm{u}_{\ls_\kappa}^2 \big) \\
&\qquad
{}\lesssim 
\kappa^{-\frac32} (T \kappa^2)^{\frac12}\norm{u}_{L^\infty_t{H^{-1}_\kappa}} \big( \norm{u}_{L^\infty_t{H^{-1}_\kappa}}^2 + \norm{u}_{\ls_\kappa}^2 \big) .
\end{align*}
For the second term, we use \eqref{h1h2} to bound
\begin{align*}
\bigg| \int_{-T}^T \int_{-\infty}^\infty \psi^2 \tfrac{16\kappa^5}{1+2\kappa h_1} h_1h_2 \dx\dt \bigg| 
&\lesssim \kappa^5 \bnorm{ \tfrac{1}{1+2\kappa h_1} }_{L^{\infty}_{t,x}} \norm{ \psi^2 h_1h_2 }_{L^1_{t,x}} \\
&\lesssim \kappa^{-\frac32} (T \kappa^{2})^{\frac34} \norm{u}_{L^\infty_t{H^{-1}_\kappa}} \big( \norm{u}_{L^\infty_t{H^{-1}_\kappa}}^2 + \norm{u}_{\ls_\kappa}^2 \big) .
\end{align*}

Finally, we turn to \eqref{j3 3}.  Using \eqref{h3}, we estimate
\begin{align*}
\bigg| \int_{-T}^T \int_{-\infty}^\infty \psi^2 \eqref{j3 3} \dx \dt \bigg|
&\lesssim \kappa^3 \bnorm{\tfrac{1}{g}}_{L^\infty_{t,x}}\sum_{\ell
\geq 3} \norm{\psi^2 h_\ell}_{L^1_{t,x}} \\
&\lesssim \kappa^{-\frac32}  (T\kappa^2)^{\frac34} \norm{u}_{L^\infty_t{H^{-1}_\kappa}} \big( \norm{u}_{L^\infty_t{H^{-1}_\kappa}}^2 + \norm{u}_{\ls_\kappa}^2 \big) .
\end{align*}

Altogether, we obtain
\begin{equation*}
\bigg| \int_{-T}^T \int_{-\infty}^\infty \psi^2_{x_0} (j-j_2) \dx \dt \bigg|
\lesssim \kappa^{-\frac32} (T\kappa^2)^\frac14 \norm{u}_{L^\infty_t{H^{-1}_\kappa}}  
\big( \norm{u}_{L^\infty_t{H^{-1}_\kappa}}^2 + \norm{u}_{\ls_\kappa}^2 \!\big) .
\end{equation*}
This implies \eqref{lem 4p3} by \eqref{k0}, and thus finishes the proof of the lemma.
\end{proof}

The last step in the proof of the theorem is to prove the estimate \eqref{rhs2}, which shows that the contributions of the source terms in the density flux relation \eqref{alpha micro 2} are perturbative:
\begin{lemma}
Assume that $\kappa \geq 1$ and $u$ satisfies \eqref{k0} uniformly in $[0,T]$, and that  the coefficients $a_1,a_2,a_3,a_4$ in \eqref{gKdV} satisfy \eqref{a}--\eqref{d}. Then
\begin{equation}
\bigg| \int_{-T}^T \int \phi_{x_0}\,  d\rho|_u \big[ (a_1u')' + a_2u^2 + a_3u' + a_4u \big] \dx\dt \bigg|  \lesssim  \kappa^{-1} (\eps +\kappa^{-2} ) \big( \norm{u}_{L^\infty_tH^{-1}_\kappa}^2 + \norm{u}_{\ls_\kappa}^2 \big) 
\label{lem 4p4}
\end{equation}
uniformly in $x_0 \in \R$.
\end{lemma}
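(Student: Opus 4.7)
The plan is to split the integrand into the four pieces coming from $(a_1u')'$, $a_2u^2$, $a_3u'$, and $a_4u$, and to estimate each separately using one of the functional derivative bounds \eqref{dg H-1} or \eqref{dg LSk} developed at the end of the previous section. In each case I would reduce to a localized piece via a continuous partition of unity (CPOU) based on the identity $c^{-1}\int \psi_z^2 \dz \equiv 1$, writing each source as $c^{-1}\int \psi_z^2 (\cdot)\dz$. The CPOU is precisely the mechanism through which the hypotheses \eqref{a}--\eqref{d} enter: after the inner integrand is estimated, the $z$-integration introduces a factor $\int \|\psi_z a_j\|_{\cdots}\dz \leq \eps$.

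For the $(a_1u')'$ piece I would use \eqref{dg LSk}, which is tailor-made for inputs of the form $(\psi^2 f)'$; with $f = a_1u'$ it outputs $\kappa^{-1}\|\psi_z a_1 u'\|_{L^2_tH^{-1}_\kappa}(\|u\|_{L^\infty_tH^{-1}_\kappa} + \|u\|_{\ls_\kappa})$. The key subestimate $\|\psi_z a_1 u'\|_{L^2_tH^{-1}_\kappa}\lesssim \|\psi_z a_1\|_{L^\infty_tH^1_x}\|u\|_{\ls_\kappa}$ follows from a second CPOU in a new variable $z'$, the algebra inequality $\|fg\|_{H^{-1}_\kappa}\lesssim \|f\|_{H^1}\|g\|_{H^{-1}_\kappa}$, and the exponential decay $\|\psi_{z'}\psi_z a_1\|_{H^1}\lesssim e^{-|z-z'|/6}\|\psi_z a_1\|_{H^1}$ inherent to the sech profile, which collapses the $z'$-integral against $\|\psi_{z'}u'\|_{L^2_tH^{-1}_\kappa}\leq \|u\|_{\ls_\kappa}$; the $W^{1,\infty}$ alternative in \eqref{a} is handled identically using the companion algebra bound. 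For the $a_3u'$ piece I would apply \eqref{dg H-1} with $f = a_3u'$ and the same localization scheme, invoking \eqref{c} and Cauchy--Schwarz in $t$. For the $a_4u$ piece I would apply \eqref{dg H-1} with $f = a_4u$; no local smoothing is needed here, and the crude bound $\|a_4u\|_{L^1_tH^{-1}_\kappa}\lesssim T\|a_4\|_{L^\infty_tH^1}\|u\|_{L^\infty_tH^{-1}_\kappa}\lesssim \kappa^{-2}\|u\|_{L^\infty_tH^{-1}_\kappa}$, using $T\leq\kappa^{-2}$, is what accounts for the $\kappa^{-2}$ term in the right hand side of \eqref{lem 4p4}.

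The main obstacle will be the quadratic term $a_2u^2$, where $u$ has only negative regularity. My plan is to apply \eqref{dg H-1} with $f = a_2u^2$, insert CPOU, and use the embedding $\|v\|_{H^{-1}_\kappa}\lesssim \kappa^{-1/2}\|v\|_{L^1_x}$ to reduce to
\[
\|\psi_z^2 a_2 u^2\|_{L^1_tH^{-1}_\kappa} \lesssim \kappa^{-\frac12}\|\psi_z a_2\|_{L^\infty_{t,x}}\int_{-T}^T\!\int \psi_z u^2 \dx\dt.
\]
The crucial ingredient is that the local smoothing norm, combined with $u\in L^\infty_tH^{-1}_\kappa$, upgrades $\psi u$ to $L^2_{t,x}$ uniformly in the translate: the Plancherel-type identity $\|\psi u\|_{L^2_x}^2 = \|(\psi u)'\|_{H^{-1}_\kappa}^2 + 4\kappa^2\|\psi u\|_{H^{-1}_\kappa}^2$, a Leibniz decomposition of $(\psi u)'$, and the bound $T\leq\kappa^{-2}$ yield $\sup_{z'}\|\psi_{z'}u\|_{L^2_{t,x}}^2 \lesssim \|u\|_{L^\infty_tH^{-1}_\kappa}^2 + \|u\|_{\ls_\kappa}^2$. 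A pointwise decay estimate $\psi_z\psi_{z'}^2 \lesssim e^{-|z-z'|/6}\psi_{z'}^2$ followed by integration in $z'$ then transfers this bound to $\int\psi_z u^2\dx\dt$. The residual $\kappa^{-\frac12}$ from the $L^1\hookrightarrow H^{-1}_\kappa$ embedding is absorbed by $\kappa^{-\frac12}\|u\|_{L^\infty_tH^{-1}_\kappa}\lesssim 1$ from \eqref{k0}, and the outer $z$-integration invokes \eqref{b} to introduce the factor $\eps$. Summing the four contributions produces the $\kappa^{-1}(\eps + \kappa^{-2})(\|u\|_{L^\infty_tH^{-1}_\kappa}^2 + \|u\|_{\ls_\kappa}^2)$ bound claimed in \eqref{lem 4p4}.
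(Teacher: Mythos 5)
Your proposal is correct and follows essentially the same route as the paper: split into the four source terms, apply \eqref{dg LSk} (with a continuous partition of unity) to $(a_1u')'$ and \eqref{dg H-1} to the other three, use $T\leq\kappa^{-2}$ to generate the $\kappa^{-2}$ from the $a_4u$ term, obtain the localized $L^2_{t,x}$ bound \eqref{LSk 5} and the embedding $L^1_x\hookrightarrow H^{-1}_\kappa$ for $a_2u^2$, and invoke \eqref{k0} to absorb the leftover $\kappa^{-1/2}\|u\|_{L^\infty_tH^{-1}_\kappa}$. The one small organizational difference is that the paper partitions with $\psi_z^3$ rather than $\psi_z^2$, so that the decomposition $\psi_z^2\cdot(\psi_z a_j)(\,\cdot\,)$ falls out immediately and no second partition with exponential-overlap bookkeeping is needed -- your extra re-localization step works but is avoidable.
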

\begin{proof}
We begin with the contribution of $(a_1u')'$.  Write
\begin{equation}
a_1(t,x) = c \int a_1(t,x) \psi_z^3(x) \dz 
\label{prtn}
\end{equation}
for some constant $c>0$.  By \eqref{a} we have
\begin{equation*}
\int \norm{ \psi_z^2 a_1u' }_{L^2_tH^{-1}_\kappa}\dz
\lesssim \int \norm{ \psi_z a_1 }_{L^\infty_t X} \norm{ \psi_z u' }_{L^2_tH^{-1}_\kappa} \dz
\lesssim \eps \norm{ u }_{\ls_\kappa} ,
\end{equation*}
where $X = H^1$ or $X = W^{1,\infty}$.  Therefore, \eqref{dg LSk} yields
\begin{align*}
&\bigg| \int_{-T}^T \int \phi\,  d\rho|_u (a_1u')' \dx\dt \bigg| \lesssim \eps \kappa^{-1} 
\big( \norm{u}_{L^\infty_t{H^{-1}_\kappa}} + \norm{u}_{\ls_\kappa} \big) \norm{u}_{\ls_\kappa} ,
\end{align*}
and this is an acceptable contribution to \eqref{lem 4p4}.

For the contributions of $a_2u^2$, $a_3u'$, and $a_4u$, we use \eqref{dg H-1} to estimate
\begin{equation}
\begin{aligned}
&\bigg| \int_{-T}^T \int \phi \,  d\rho|_u  ( a_2u^2 + a_3u' + a_4u ) \dx\dt \bigg| \\
&\qquad\lesssim \kappa^{-1} \norm{u}_{L^\infty_t{H^{-1}_\kappa}} \big( \snorm{a_2u^2}_{L^1_tH^{-1}_\kappa} + \snorm{a_3u'}_{L^1_tH^{-1}_\kappa} + \snorm{a_4u}_{L^1_tH^{-1}_\kappa} \big) .
\end{aligned}
\label{lem 4p4 2}
\end{equation}

For $a_4u$, we use \eqref{d} and \eqref{k0} to bound
\begin{equation*}
\norm{a_4u}_{L^1_tH^{-1}_\kappa} 
\lesssim T \norm{a_4}_{L^\infty_tX} \norm{u}_{L^\infty_t{H^{-1}_\kappa}}
\lesssim  \kappa^{-2} \norm{u}_{L^\infty_t{H^{-1}_\kappa}}  ,
\end{equation*}
where $X = H^1$ or $X = W^{1,\infty}$.

For $a_3u'$, we use \eqref{prtn} and \eqref{c} to bound
\begin{equation*}
\snorm{ a_3u' }_{L^1_tH^{-1}_\kappa}
\lesssim \int \snorm{ \psi^2_z a_3 u' }_{L^1_tH^{-1}_\kappa} \dz
\lesssim \norm{u}_{\ls_\kappa} \int \bnorm{ \psi_z a_3 }_{L^2_tX} \dz 
\lesssim \eps \norm{u}_{\ls_\kappa} ,
\end{equation*}
where $X = H^1$ or $X = W^{1,\infty}$.

Finally, for $a_2u^2$, we use \eqref{LSk 4 new}--\eqref{LSk 2 new} to bound
\begin{equation}
\norm{ \psi_z u}_{L^2_{t,x}}
\lesssim \norm{u}_{\ls_\kappa} + \kappa \norm{u}_{
{L^2_t}H^{-1}_\kappa}
\lesssim \norm{u}_{\ls_\kappa} + \norm{u}_{L^\infty_tH^{-1}_\kappa} ,
\label{LSk 5}
\end{equation}
provided that $T \leq \kappa^{-2}$.  
Using \eqref{prtn}, the embedding $L^1\hookrightarrow H^{-1}_\kappa$, and \eqref{b}, we see that
\begin{align*}
\norm{ a_2 u^2 }_{L^1_tH^{-1}_\kappa}
&\lesssim \int \norm{\psi_z^3 a_2 u^2}_{L^1_tH^{-1}_\kappa} \dz
\lesssim \kappa^{-\frac{1}{2}} \int  \norm{\psi_z^3 a_2 u^2}_{L^1_{t,x}} \!\dz \\
&\lesssim \kappa^{-\frac{1}{2}} \int \norm{ \psi_z u}_{L^2_{t,x}}^2 \norm{\psi_z a_2}_{L^\infty_{t,x}} \dz 
\lesssim \eps \kappa^{-\frac{1}{2}} \big( \norm{u}_{L^\infty_tH^{-1}_\kappa}^2 + \norm{u}_{\ls_\kappa}^2 \big) .
\end{align*}

Returning to \eqref{lem 4p4 2}, we conclude
\begin{align*}
&\bigg| \int_{-T}^T \int \phi \,  d\rho|_u  ( a_2u^2 + a_3u' + a_4u ) \dx\dt \bigg| \\
&\qquad\lesssim \kappa^{-1} ( \eps + \kappa^{-2} ) \big( \norm{u}_{L^\infty_tH^{-1}_\kappa}^2 + \norm{u}_{\ls_\kappa}^2 \big) 
+ \eps \kappa^{-\frac{3}{2}} \norm{u}_{L^\infty_tH^{-1}_\kappa} \big( \norm{u}_{\ls_\kappa}^2 + \norm{u}_{L^\infty_tH^{-1}_\kappa}^2 \big) .
\end{align*}
This implies \eqref{lem 4p4} by \eqref{k0}, and thus completes the proof.

\end{proof}
Collecting the last three lemmas, this concludes the proof of Theorem~\ref{t:local smoothing}.

\end{proof}

\section{The a-priori estimates}
\label{s:ap est}

In this section, we will prove the energy estimate for solutions to \eqref{gKdV}, and use it to conclude the proof of our main
results in Theorem~\ref{t:energy}
and Theorem~\ref{t:le}.

\begin{proposition}
\label{t:ap est}
For any $\eps\in (0,1]$, if the coefficients $a_1,a_2,a_3,a_4$ of \eqref{gKdV} satisfy \eqref{a}--\eqref{d}, then any solution $u(t)$ to \eqref{gKdV} satisfies
\begin{align}
\norm{u}_{C_tH^{-1}_\kappa}^2
&\lesssim \norm{u(0)}_{H^{-1}_\kappa}^2 +  (\eps+T) \big( \norm{u}_{L^\infty_tH^{-1}_\kappa}^2 + \norm{u}_{\ls_\kappa}^2 \big) 
\label{alpha dot 2}
\end{align}
uniformly for $T \leq \kappa^{-2}$ and  $\kappa$ satisfying \eqref{k0} in $[0,T]$.
\end{proposition}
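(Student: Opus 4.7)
The strategy is to exploit the identity \eqref{alpha micro 2}, which provides an approximate conservation law for the renormalized perturbation determinant $\alpha(\kappa,u(t))$, together with the equivalence $\kappa\alpha\approx\|u\|^2_{H^{-1}_\kappa}$ from Proposition~\ref{t:alpha}. Integrating \eqref{alpha micro 2} in $x$, the flux contribution $\int\partial_x j\dx$ vanishes (for smooth solutions $j$ decays at infinity), so
\begin{equation*}
\tfrac{d}{dt}\alpha(\kappa,u(t)) = \int d\rho|_u\bigl[(a_1u')' + a_2u^2 + a_3u' + a_4u\bigr]\dx.
\end{equation*}
Integrating in $t$ over $[0,t]$ and invoking \eqref{alpha est} gives
\begin{equation*}
\|u(t)\|^2_{H^{-1}_\kappa} \leq 4\|u(0)\|^2_{H^{-1}_\kappa} + 4\kappa\Bigl|\int_0^t\!\!\int d\rho|_u[\text{source}]\dx\ds\Bigr|,
\end{equation*}
so it suffices to bound the remaining double integral by $\kappa^{-1}(\eps+T)(\|u\|^2_{L^\infty_tH^{-1}_\kappa}+\|u\|^2_{\ls_\kappa})$ uniformly in $t\in[0,T]$; taking a supremum in $t$ then yields \eqref{alpha dot 2}.

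The bound on the double integral closely mirrors the estimate \eqref{lem 4p4}, with two differences: the test function is the constant $1$ rather than $\phi_{x_0}$, and one must retain the $T$ factor from the $a_4u$ contribution rather than exchanging it for $\kappa^{-2}$. For the three source terms without a spatial derivative, $a_2u^2$, $a_3u'$, and $a_4u$, I would use \eqref{dg H-1} directly, which requires no localization. Combined with the partition $a_j = c^{-1}\!\int a_j\psi_z^3\dz$, hypotheses \eqref{b}--\eqref{d}, and the bound \eqref{LSk 5}, this reproduces the $\eps$ contribution from $a_2u^2$ and $a_3u'$ exactly as in the proof of \eqref{lem 4p4}, while the $a_4u$ contribution is kept as $T\|u\|_{L^\infty_tH^{-1}_\kappa}$ rather than being absorbed into $\kappa^{-2}\|u\|_{L^\infty_tH^{-1}_\kappa}$.

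The main obstacle is the derivative term $(a_1u')'$: direct use of \eqref{dg H-1} would cost a derivative on $u$ we cannot afford at $H^{-1}$ regularity. For this term I would invoke the refined estimate \eqref{dg LSk}, exactly as in the corresponding step of the proof of \eqref{lem 4p4}, to bound $\bigl|\int\phi_{x_0}\,d\rho|_u[(a_1u')']\dx\ds\bigr|$ by $\eps\kappa^{-1}(\|u\|_{L^\infty_tH^{-1}_\kappa}+\|u\|_{\ls_\kappa})\|u\|_{\ls_\kappa}$, uniformly in $x_0\in\R$. By the triangle inequality, the same bound (up to a factor of $2$) applies to $\bigl|\int(\phi_{-N}-\phi_N)\,d\rho|_u[(a_1u')']\dx\ds\bigr|$ uniformly in $N>0$; since $\phi_{-N}(x)-\phi_N(x)\to 12$ pointwise with $|\phi_{-N}-\phi_N|\leq 12$, dominated convergence transfers the bound to the constant weight $1$, yielding the required estimate for this term. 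Summing the four contributions completes the proof.
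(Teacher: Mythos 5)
Your proposal reaches the correct estimate, and the treatment of the three lower‑order source terms $a_2u^2$, $a_3u'$, $a_4u$ via \eqref{dg H-1} is essentially identical to the paper's bookkeeping. However, your overall strategy diverges from the paper's at the very first step, and the difference is worth noting.

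The paper does \emph{not} estimate $\int d\rho|_u[\cdot]\,dx$ directly. Instead, after integrating \eqref{alpha micro 2} in space, it invokes the trace identity $\int \frac{G(x,y)\,G(y,x)}{2\,g(y)^2}\,dy = g(x)$ (from \cite{Killip2019}*{Lem.~2.5}), which collapses the functional derivative $\int \bigl[\tfrac{1}{2g^2}\,dg|_u + 2\kappa R_0(2\kappa)\bigr][\cdot]\,dx$ into the much more concrete expression $-\int\bigl(g-\tfrac{1}{2\kappa}\bigr)[\cdot]\,dx$. This single algebraic simplification is the crux of the paper's proof: it replaces the operator‑theoretic object $d\rho|_u$ by the scalar function $g-\tfrac{1}{2\kappa}$, and from there each source term is estimated directly. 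In particular, the awkward term $(a_1u')'$ becomes trivial — one integration by parts moves the derivative onto $g-\tfrac{1}{2\kappa}$, and \eqref{LSk g} (the local smoothing estimate for $g'$) closes the bound with no truncation or limiting argument at all.

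Your route avoids this identity and instead leans on the operator estimate \eqref{dg LSk} (designed for the compactly‑weighted local smoothing lemma) plus a dominated‑convergence argument with $\phi_{-N}-\phi_N\to 12$. This does go through for smooth solutions — one needs the qualitative fact that $d\rho|_u[(a_1u')']\in L^1_{t,x}$, which follows from \eqref{dg H-1} applied with $f=(a_1u')'\in L^1_t H^{-1}_\kappa$ (valid qualitatively for smooth $u$ and Schwartz‑class $a_1$, even though that bound is useless quantitatively at $H^{-1}$ regularity). But the approach is heavier: it re‑imports the full machinery of the local smoothing lemma into a setting where the paper shows it is unnecessary, and it adds a truncation/limiting step that requires separate qualitative justification. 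The paper's route is both shorter and structurally cleaner, at the cost of invoking the external Green's‑function identity; your route is self‑contained but effectively re‑proves the $a_1$ bound the hard way. Also a small slip: your limit is $12$, not $1$, so the final bound for the constant‑weight integral comes with an extra harmless factor of $12$.
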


\begin{proof}
Integrating \eqref{alpha micro 2} in space and using \eqref{drho}, we obtain
\begin{equation*}
\ddt \alpha = \int \big[ \tfrac{1}{2g^2} dg|_u + 2\kappa R_0(2\kappa) \big] \big[ (a_1u')' + a_2u^2 + a_3u' + a_4u \big] \dy .
\end{equation*}
Next, we use \eqref{dg} and the identity
\begin{equation*}
\int \frac{ G(x,y) G(y,x) }{2 g(y)^2 } \dy = g(x)
\end{equation*}
(see~\cite{Killip2019}*{Lem.\ 2.5} for a proof) to write
\begin{equation*}
\ddt \alpha = - \int (g-\tfrac{1}{2\kappa}) \big[ (a_1u')' + a_2u^2 + a_3u' + a_4u \big] \dx .
\end{equation*}

By \eqref{alpha est} and the fundamental theorem of calculus, this yields
\begin{equation}
\norm{u}_{C_tH^{-1}_\kappa}^2
\lesssim \norm{u(0)}_{H^{-1}_\kappa}^2 + \kappa \int_{-T}^T \bigg| \int (g-\tfrac{1}{2\kappa}) \big[ (a_1u')' + a_2u^2 + a_3u' + a_4u \big] \dx \bigg|\dt .
\label{alpha dot 3}
\end{equation}
We will estimate the contribution from each coefficient $a_j$ in turn.

We begin with the contribution of $a_4$.  Using \eqref{g est} and \eqref{d}, we have
\begin{align*}
\int_{-T}^T \bigg| \int (g-\tfrac{1}{2\kappa}) a_4u \dx \bigg|\dt
&\leq T \norm{ g - \tfrac{1}{2\kappa} }_{L^\infty_tH^1_\kappa} \norm{ a_4u }_{L^\infty_tH^{-1}_\kappa} \\
&\lesssim \kappa^{-1} T \norm{a_4}_{L^\infty_tX}  \norm{u}_{L^\infty_tH^{-1}_\kappa}^2 \\
&\lesssim \kappa^{-1} T \norm{u}_{L^\infty_tH^{-1}_\kappa}^2 ,
\end{align*}
where $X = H^1$ or $X = W^{1,\infty}$.

Next, we address $a_3$.  Using the continuous partition of unity \eqref{prtn} and \eqref{c}, we have
\begin{align*}
\int_{-T}^T \bigg| \int (g-\tfrac{1}{2\kappa}) a_3u' \dx \bigg| \dt
&\leq \norm{ g - \tfrac{1}{2\kappa} }_{L^\infty_tH^1_\kappa} \norm{ a_3 u' }_{L^1_tH^{-1}_\kappa} \\
&\lesssim \kappa^{-1} \norm{u}_{L^\infty_t H^{-1}_\kappa} \int \norm{ \psi_z a_3 }_{L^2_tX} \norm{\psi_z u'}_{L^2_tH^{-1}_\kappa} \dz  \\
&\lesssim \eps \kappa^{-1} \norm{u}_{L^\infty_t H^{-1}_\kappa} \norm{u}_{\ls_\kappa} .
\end{align*}

For $a_2$, we use the embedding $H^1_\kappa \hookrightarrow L^\infty$, \eqref{prtn}, and \eqref{LSk 5} to bound
\begin{align*}
\int_{-T}^T \bigg| \int (g-\tfrac{1}{2\kappa}) a_2u^2 \dx\bigg| \dt
&\leq \norm{ g - \tfrac{1}{2\kappa} }_{L^\infty_{t,x}} \norm{ bu^2 }_{L^1_{t,x}} \\
&\lesssim \kappa^{-\frac{3}{2}} \norm{ u }_{L^\infty_tH^{-1}_\kappa} \int \norm{\psi_z a_2}_{L^\infty_{t,x}} \norm{\psi_z u}_{L^2_{t,x}}^2 \dz \\
&\lesssim \eps \kappa^{-\frac{3}{2}} \norm{ u }_{L^\infty_tH^{-1}_\kappa} \big( \norm{u}_{\ls_\kappa}^2 +  \norm{u}_{L^\infty_tH^{-1}_\kappa}^2  \big) 
 \\
&\lesssim \eps \kappa^{-1} \big( \norm{u}_{\ls_\kappa}^2 +  \norm{u}_{L^\infty_tH^{-1}_\kappa}^2  \big) ,
\end{align*}
using \eqref{k0} at the last step.

Lastly, we turn to the contribution of $a_1$.  We integrate by parts once in space, and then we use \eqref{prtn}, \eqref{LSk g}, and \eqref{a} to estimate:
\begin{align*}
\int_{-T}^T \bigg|  \int (g-\tfrac{1}{2\kappa}) (a_1u')' \dx \bigg|  \dt
&\leq \int \norm{ \psi_z g' }_{L^2_tH^1_\kappa} \norm{ \psi_z u' }_{L^2_tH^{-1}_\kappa} \norm{ \psi_z a_1 }_{L^\infty_t X} \dz  \\
&\lesssim \eps \kappa^{-1}  \norm{u}_{\ls_\kappa}^2 .
\end{align*}

Altogether, returning to \eqref{alpha dot 3}, we obtain
\begin{align*}
\norm{u}_{C_tH^{-1}_\kappa}^2 
&\lesssim \norm{u(0)}_{H^{-1}_\kappa}^2 +(\eps +T) \big( \norm{u}_{L^\infty_tH^{-1}_\kappa}^2 + \norm{u}_{\ls_\kappa}^2 \big)  . 
\qedhere
\end{align*}
\end{proof}

At this point, we are prepared to complete the proofs of Theorems~\ref{t:energy} and \ref{t:le} by combining Propositions~\ref{a} and \ref{alpha dot 2} with a continuity argument.
We restate the results together in a stronger
form:

\begin{theorem}
\label{t:ap est full}
There exist $\epsilon_0, c_0 > 0$ so that, given coefficients $a_1,\dots,a_4$ in \eqref{gKdV} satisfying \eqref{a}--\eqref{d} with $\eps = \eps_0$ and $R \geq 1$, for any initial data $u_0$ satisfying
\begin{equation}
 \|u_0\|_{H^{-1}} \leq R ,
\end{equation}
the solution $u(t)$ to \eqref{gKdV} exists up to time
\begin{equation}
T_0 = c_0 R^{-4} ,
\end{equation}
and on $[0,T_0]$ it satisfies
\begin{equation}
\norm{u}_{L^\infty_tH^{-1}_\kappa} + \norm{u}_{\ls_\kappa}  \lesssim R, \qquad \kappa \gg R^{2} .
\label{ap est 2}
\end{equation}
\end{theorem}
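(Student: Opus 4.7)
The plan is to prove Theorem~\ref{t:ap est full} by a continuity (bootstrap) argument that feeds Theorem~\ref{t:local smoothing} and Proposition~\ref{t:ap est} into each other. First, fix $\kappa \sim R^2$ (choosing the implicit constant sufficiently large if needed so that \eqref{k0} is consistent with our bootstrap hypothesis below) and $T_0 = c_0 R^{-4}$ with $c_0 \leq 1$ small; then automatically $T_0 \leq \kappa^{-2}$ and $(T_0\kappa^2)^{\frac14} = c_0^{\frac14}$ can be made arbitrarily small. Note also that since $R \geq 1$ and $\kappa \geq 1$, one has $\norm{u_0}_{H^{-1}_\kappa}^2 \lesssim \norm{u_0}^2_{H^{-1}} \leq R^2$.

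The bootstrap quantity to consider is
\[
F(t) := \norm{u}^2_{L^\infty_{[0,t]}H^{-1}_\kappa} + \norm{u}^2_{\ls_\kappa([0,t])},
\]
and the bootstrap hypothesis is $F(t) \leq K^2 R^2$ for a constant $K$ to be chosen. Under this hypothesis (after possibly enlarging the implicit constant in $\kappa \sim R^2$), the condition \eqref{k0} holds throughout $[0,t]$, so both earlier results are applicable. Applying Theorem~\ref{t:local smoothing} and using that the sum of small parameters $\eps_0 + (T\kappa^2)^{\frac14} + \kappa^{-2} \leq \eps_0 + c_0^{\frac14} + R^{-4}$ can be made $\leq \frac{1}{4C}$ by choosing $\eps_0, c_0$ small and noting $R \geq 1$, we absorb the $\norm{u}^2_{\ls_\kappa}$ term on the left to obtain
\[
\norm{u}^2_{\ls_\kappa} \leq 2\norm{u}^2_{C_tH^{-1}_\kappa} + \tfrac14 F(t).
\]
Similarly, Proposition~\ref{t:ap est} together with $\eps_0 + T_0$ small yields
\[
\norm{u}^2_{C_tH^{-1}_\kappa} \leq C_1 R^2 + \tfrac14 F(t).
\]
Adding these two bounds and reabsorbing $F(t)$ on the left produces $F(t) \leq C_2 R^2$ for a universal $C_2$ independent of $R$.

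With $K$ chosen so that $C_2 < \tfrac12 K^2$, the bootstrap conclusion strictly improves the bootstrap hypothesis. Since smoothness of $u$ (guaranteed for smooth initial data by standard local well-posedness of \eqref{gKdV} at high regularity, which one uses only to make sense of the objects) ensures that $F$ is continuous in $t$ with $F(0) \leq C_1 R^2 \leq \tfrac12 K^2 R^2$, a standard continuity argument shows that the set of $t \in [0,T_0]$ on which the hypothesis holds is relatively open, closed, and nonempty, hence all of $[0,T_0]$. This yields the bound \eqref{ap est 2} and, via the a-priori control on $\norm{u}_{L^\infty_tH^{-1}_\kappa}$ together with the assumed local well-posedness at high regularity, extends the smooth solution up to time $T_0$.

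The most delicate point is to verify \eqref{k0} holds on $[0,T_0]$ for our choice $\kappa = R^2$ while closing the bootstrap. Indeed, \eqref{k0} requires $\kappa \geq 1 + C \norm{u}_{L^\infty_tH^{-1}_\kappa}^2$, and the bootstrap bound is of the form $\norm{u}_{L^\infty_tH^{-1}_\kappa}^2 \lesssim R^2$; these two are only simultaneously consistent if the implicit constant on the right is strictly less than $1$. This forces us to balance the smallness of $\eps_0, c_0$ (which govern the perturbation errors) against the multiplicative constant arising when we sum the bounds from Theorem~\ref{t:local smoothing} and Proposition~\ref{t:ap est}. Once this bookkeeping is done (possibly replacing $\kappa = R^2$ by $\kappa = C^\ast R^2$ with $C^\ast$ large), the bootstrap closes cleanly. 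Finally, specialising to coefficients satisfying \eqref{hyp 1}--\eqref{hyp 2} recovers Theorems~\ref{t:energy} and \ref{t:le}, and applying a suitable change of variables sending \eqref{KdVvb} into the form \eqref{gKdV} yields Corollary~\ref{t:KdV vb}.
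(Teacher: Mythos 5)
Your overall strategy is the same as the paper's — combine Theorem~\ref{t:local smoothing} and Proposition~\ref{t:ap est} into a bootstrap, close it by taking $\eps_0$, $c_0$ small and $\kappa$ a large multiple of $R^2$, and run a continuity argument — and you correctly identified the consistency issue between \eqref{k0} and $\kappa \sim R^2$, which the paper handles by taking $\kappa = 1 + 4CR^2$.

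However, the absorption step as you wrote it does not close. With your unweighted bootstrap quantity $F(t) = \norm{u}^2_{L^\infty_tH^{-1}_\kappa} + \norm{u}^2_{\ls_\kappa}$ and the intermediate bounds
\begin{align*}
\norm{u}^2_{\ls_\kappa} &\leq 2\norm{u}^2_{C_tH^{-1}_\kappa} + \tfrac14 F(t), \\
\norm{u}^2_{C_tH^{-1}_\kappa} &\leq C_1 R^2 + \tfrac14 F(t),
\end{align*}
substituting the second into the first and adding gives $F(t) \leq 3C_1 R^2 + F(t)$: the coefficient of $F(t)$ on the right is exactly $1$, so nothing can be absorbed. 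The culprit is the factor $2$ in front of $\norm{u}^2_{C_tH^{-1}_\kappa}$ in \eqref{LSk}, which survives and doubles the error coming from Proposition~\ref{t:ap est}. There are two fixes. One is to insist on a smaller smallness threshold, say the error terms are $\leq \tfrac18 F(t)$ rather than $\tfrac14 F(t)$; then $F(t) \leq 3C_1 R^2 + \tfrac12 F(t)$ and the argument closes. The other (the paper's choice) is to weight the local smoothing norm, taking $B_T = \norm{u}^2_{C_tH^{-1}_\kappa} + \tfrac14\norm{u}^2_{\ls_\kappa}$; after multiplying \eqref{LSk} by $\tfrac14$, the offending term becomes $\tfrac12\norm{u}^2_{C_tH^{-1}_\kappa}$, which can be absorbed directly. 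Either route gives the same conclusion, but the proposal as written stops one step short.
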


\begin{proof}
For any $\kappa \gg R^{2}$, the condition 
\eqref{k0} holds at $t = 0$ and thus on some time interval $[0,T]$. Let $T  \leq  \kappa^{-2}$ be any time for which the solution exists in $[0,T]$ and 
satisfies \eqref{k0} uniformly in $[0,T]$.
Then both Propositions~\ref{a} and \ref{alpha dot 2} apply in $[0,T]$.

Consider the quantity
\begin{equation*}
B_T := \norm{u}_{C_tH^{-1}_\kappa([-T,T]\times\R)}^2 + \tfrac{1}{4}\norm{u}_{\ls_\kappa([-T,T]\times\R)}^2 .
\end{equation*}
Adding \eqref{LSk} and \eqref{alpha dot 2}, we see that there exists a universal constant $C\geq 1$ so that
\begin{equation}
B_T \leq C R^2 + C(\eps_0 + (T \kappa^2)^\frac14 + \kappa^{-2})B_T .
\label{boot 2}
\end{equation}
Assuming 
\begin{equation}\label{need}
\epsilon_0 \ll 1, \qquad T \ll \kappa^{-2}, 
\qquad \kappa \gg 1,
\end{equation}
this will allow us to conclude
\begin{equation}\label{good bound}
B_T \leq 2C R^2 .
\end{equation}

Next, we will fix our parameters in order.  First we choose $\epsilon_0 \ll 1$ so that the first relation \eqref{need} holds, then
\[
\kappa =1 + 4CR^2
\]
so that the third relation in \eqref{need} holds, and lastly
\[
T_0 = c \kappa^{-2}, \qquad c \ll 1
\]
so that the second relation of \eqref{need} holds. 

Finally, we run a standard continuity argument. We let $T \in (0,T_0]$ be the maximal time so that \eqref{k0} holds in $[0,T]$. Then, the argument above demonstrates that \eqref{good bound} holds in $[0,T]$. In particular, for this choice of $\kappa$, we know that \eqref{k0} holds strictly at time $T$. However, this contradicts the maximality of $T$, unless
$T=T_0$. We conclude that $T = T_0$, and thus \eqref{good bound} holds in $[0,T_0]$. This concludes the proof of the theorem.
\end{proof}

Next, we will show that the assumptions \eqref{hyp 1}--\eqref{hyp 2} on the coefficients $a_1,\dots,a_4$ in Theorems~\ref{t:energy} and \ref{t:le} provide an example of when our hypotheses \eqref{a}--\eqref{d} are satisfied.  
\begin{lemma}
Given $\eps,T\in (0,1]$, there exists a constant $\del\in (0,1]$ so that, if the coefficients $a_1,\dots,a_4$ satisfy
\begin{align}
|a_j(t,x)| + |\partial_xa_j(t,x)| &\leq \del (1+x^2)^{-1} \quad\text{for }j=1,2,3 , 
\label{hyp 3}\\
|a_4(t,x)| + |\partial_xa_4(t,x)| &\lesssim (1+x^2)^{-1}
\label{hyp 4}
\end{align}
uniformly for $|t|\leq T$ and $x\in\R$, then \eqref{a}--\eqref{d} are satisfied for $\eps$.
\end{lemma}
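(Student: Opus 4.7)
The plan is to verify the $W^{1,\infty}$ alternative in each of \eqref{a}, \eqref{c}, \eqref{d} (and \eqref{b} directly), since those branches follow most transparently from the pointwise hypotheses \eqref{hyp 3}--\eqref{hyp 4}. The core ingredient will be the elementary bound
\begin{equation*}
M(z) := \sup_{x\in\R} \operatorname{sech}\bigl(\tfrac{x-z}{6}\bigr)(1+x^2)^{-1} \lesssim (1+z^2)^{-1},
\end{equation*}
which I would prove by splitting into $|x|\leq|z|/2$ and $|x|\geq|z|/2$: in the first regime $|x-z|\geq|z|/2$ and the $\operatorname{sech}$ factor decays exponentially in $|z|$, while in the second $(1+x^2)^{-1}\lesssim(1+z^2)^{-1}$.

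Next I would use that $|\psi_z'(x)| = \tfrac{1}{6}|\operatorname{sech}(\tfrac{x-z}{6})\tanh(\tfrac{x-z}{6})|\leq \tfrac{1}{6}\psi_z(x)$, together with the Leibniz expansion $\partial_x(\psi_z a_j) = \psi_z' a_j + \psi_z\,\partial_x a_j$, so that
\begin{equation*}
\|\psi_z a_j\|_{W^{1,\infty}_x} \lesssim \sup_x \psi_z(x)\bigl(|a_j(x)|+|\partial_x a_j(x)|\bigr).
\end{equation*}
Inserting \eqref{hyp 3} (resp.\ \eqref{hyp 4}) and invoking the core bound gives $\|\psi_z a_j\|_{W^{1,\infty}_x}\lesssim \delta(1+z^2)^{-1}$ for $j=1,2,3$, and $\|\psi_z a_4\|_{W^{1,\infty}_x}\lesssim(1+z^2)^{-1}$, uniformly in $t\in[-T,T]$.

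Integrating in $z$ against the integrable weight $(1+z^2)^{-1}$ yields $\int\|\psi_z a_j\|_{L^\infty_t W^{1,\infty}_x}\,dz\lesssim \delta$ for $j=1,2$ (the sup over $t$ is harmless as the right-hand side is time-independent), which handles \eqref{a} directly and \eqref{b} after discarding the $W^{1,\infty}$ part. For \eqref{c}, I would use the trivial embedding $L^\infty_t([-T,T])\hookrightarrow L^2_t([-T,T])$ with loss $\sqrt{2T}$ to conclude $\int\|\psi_z a_3\|_{L^2_t W^{1,\infty}_x}\,dz\lesssim \sqrt{T}\,\delta$. Finally \eqref{d} only requires finiteness, which follows from $\|a_4\|_{L^\infty_tW^{1,\infty}_x}\lesssim 1$ by taking $\sup_x$ in \eqref{hyp 4}. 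Choosing $\delta = c\eps$ with $c>0$ a sufficiently small absolute constant (noting $T\leq 1$) makes each left-hand side at most $\eps$, as required.

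There is no serious obstacle here: the entire argument is a routine pointwise/Fubini calculation, and the only minor care needed is in handling the derivative term via Leibniz, which is immediate because $|\psi_z'|\lesssim \psi_z$ and $\partial_x a_j$ enjoys the same pointwise decay as $a_j$ itself.
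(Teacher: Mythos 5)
Your proof is correct, and it takes the opposite branch of the alternatives in \eqref{a}, \eqref{c}, \eqref{d} from the paper: you verify the $W^{1,\infty}$ option, whereas the paper verifies the $H^1$ option. Concretely, the paper writes $\psi_z^2$ as a sum of indicator functions over dyadic-like annuli $\{n\le|x-z|<n+1\}$ weighted by $\operatorname{sech}^2(n)$, bounds $\int|\psi_z a_j|^2\,dx \lesssim \delta^2\langle z\rangle^{-4}$ to conclude $\int\|\psi_z a_j\|_{H^1_x}\,dz\lesssim\delta$, and then factors out time trivially. You instead establish the pointwise bound $\sup_x\psi_z(x)(1+x^2)^{-1}\lesssim(1+z^2)^{-1}$ by splitting $|x|\lessgtr|z|/2$, combine it with $|\psi_z'|\lesssim\psi_z$ and Leibniz to control $\|\psi_z a_j\|_{W^{1,\infty}_x}$, and integrate in $z$. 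Both arguments are elementary and essentially the same length; yours avoids any $L^2$ bookkeeping by working purely in $L^\infty$, while the paper's approach has the minor advantage of producing an $H^1$ bound that would still work if the pointwise decay in \eqref{hyp 3}--\eqref{hyp 4} were relaxed to an $L^2$-type decay. Your handling of the $\sqrt{T}$ loss in \eqref{c} and the finiteness-only requirement of \eqref{d} are both fine.
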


\begin{proof}
We partition $\psi$ as follows:
\begin{equation*}
\psi^2(x) = \operatorname{sech}^2 x \leq \sum_{n\geq 0} \operatorname{sech}^2(n)\, 1_{\{n\leq |x| < n+1\}}(x) .
\end{equation*}
This yields
\begin{equation*}
\int |\psi_z(x)a_j(x)|^2 \dx
\leq \sum_{n\geq 0} \operatorname{sech}^2(n) \int_{n \leq |x-z| < n+1} |a_1(x)|^2\dx
\lesssim \del^2 \langle z \rangle^{-4} ,
\end{equation*}
and so
\begin{equation*}
\int \norm{ \psi_z a_j }_{L^2_x} \dz
\lesssim \del\int \langle z \rangle^{-2} \dz
\lesssim \del .
\end{equation*}
Using the same estimates for $\partial_xa_j$, we obtain
\begin{equation*}
\int \norm{ \psi_z a_j }_{H^1_x} \dz \lesssim \del
\quad\tx{for }j=1,2,3,4.
\end{equation*}
For $j=4$, this shows that \eqref{d} is satisfied.  For $j=1,2,3$, this demonstrates that we may choose $\del\leq 1$ sufficiently small so that \eqref{a}--\eqref{c} hold.
\end{proof}

Lastly, we show that our result applies to the model \eqref{KdVvb} for the propagation of waves in a channel over a variable bottom.
\begin{proof}[Proof of Corollary \ref{t:KdV vb}]
If the function $c:\R\to\R$ that describes the channel bottom is smooth and satisfies $\norm{c}_{L^\infty}<1$, then the function $y:\R\to\R$ given by
\begin{equation*}
y(x) = \int_0^x \frac{1}{b^{\frac53}(\xi)}\,d\xi = \int_0^x \frac{1}{[1-c(\xi)]^{\frac56}}\,d\xi
\end{equation*}
is well-defined and satisfies
\begin{equation}
|y'(x)| \approx 1 \quad\text{uniformly for }x\in\R .
\label{y 1}
\end{equation}
Consequently,
\begin{equation}
\int |f(y)|^2\dy \approx \int |(f\circ y)(x)|^2\dx \quad\text{and}\quad \int |f'(y)|^2\dy \approx \int |(f\circ y)'(x)|^2\dx ,
\label{y 2}
\end{equation}
and so
\begin{equation}
\norm{f\circ y}_{H^1_\kappa} \approx \norm{f}_{H^1_\kappa} \quad\text{uniformly for }\kappa\geq 1.
\label{y 3}
\end{equation}
Additionally, we note that $y^{-1}:\R\to\R$ exists by the inverse function theorem and also satisfies \eqref{y 1}--\eqref{y 3}.  A straightforward duality argument then shows
\begin{equation}
\norm{g\circ y}_{H^{-1}_\kappa} \approx \norm{g}_{H^{-1}_\kappa} \quad\text{uniformly for }\kappa\geq 1.
\label{y 4}
\end{equation}

Making the change of variables
\begin{equation}
u(t,x) = b^{\frac53}(x) v(t,y(x)-4t) ,
\label{cov}
\end{equation}
we find that if $u(t,x)$ solves~\eqref{KdVvb} then $v(t,y)$ solves \eqref{gKdV} with coefficients
\begin{align*}
a_1(t,y) &= 0,\\
a_2(t,y) &= 10b^{\frac23}b' \circ y^{-1} ,\\
a_3(t,y) &= \big[ \tfrac59 b^{\frac43} (b')^2 -\tfrac{10}{3}b^{\frac73}b'' + 4(1-b^{-\frac23}) \big] \circ y^{-1} , \\
a_4(t,y) &= \big[ \tfrac{10}{3} b^2(b')^3 - \tfrac{10}{3} b^3b'b'' - \tfrac{5}{3} b^4b''' -\tfrac{38}{3} b' \big] \circ y^{-1} .
\end{align*}
Clearly, we may choose $\eta\in (0,1]$ sufficiently small so that the conditions
\begin{equation*}
|\partial_x^jc(x)| \leq \eta (1+x^2)^{-1} \quad\text{for }j=0,1,\dots,4 
\end{equation*}
imply that the coefficients $a_1,\dots,a_4$ above satisfy \eqref{hyp 3}--\eqref{hyp 4}.  Note that the $4$ in \eqref{cov} contributes the constant term $4$ in $a_3$, which is needed to ensure that this coefficient vanishes as $y\to\pm\infty$.

This demonstrates that $v$ satisfies the a-priori estimate \eqref{ap est 2}.  This in turn implies that $u$ satisfies \eqref{ap est 2} as well, by \eqref{y 4}.
\end{proof}

\bibliographystyle{amsplain}
\bibliography{refs}

\end{document}